\numberwithin{equation}{section}  
\DeclareMathOperator{\sech}{sech}
\definecolor{camel}{rgb}{0.76, 0.6, 0.42}
\newtheorem{definition}{Definition}[section]
\newtheorem{theorem}[definition]{Theorem}
\newtheorem{Claim}[definition]{Claim}
\newtheorem{Remark}[definition]{Remark}
\newtheorem{example}[definition]{Example}
\newtheorem{proof}[definition]{Proof}
\newtheorem{Corollary}[definition]{Corollary}
\newcommand \bei {\begin{itemize}}
\newcommand \eei {\end{itemize}}
\newcommand \ubar u
\newcommand \del \partial
\newcommand \la \langle
\newcommand \ra \rangle 
\newcommand \auth    \textsc
\newcommand \be {\begin{equation}}
\newcommand \ee {\end{equation}}
\newcommand \bcor {\begin{Corollary}}
\newcommand \ecor {\end{Corollary}}
\newcommand \bpro {\begin{proof}}
\newcommand \epro {\end{proof}}
\newcommand \bdf {\begin{definition}}
\newcommand \edf {\end{definition}}
\newcommand \bex {\begin{example}}
\newcommand \eex {\end{example}}
\newcommand \bcl {\begin{Claim}}
\newcommand \ecl {\end{Claim}}
\newcommand \brm {\begin{Remark}}
\newcommand \erm {\end{Remark}}
\let\oldmarginpar\marginpar
\renewcommand\marginpar[1]{\-\oldmarginpar[\raggedleft\footnotesize #1]%
{\raggedright\footnotesize #1}}
\begin{document}
\begin{frontmatter}
 \title{ A numerical model preserving nontrivial steady-state solutions for predicting waves run-up on coastal areas} %Well-Balanced Positivity Preserving Riemann-Solver-Free-Method for two-phase flow model}
\author{Hasan Karjoun$^a$, Abdelaziz Beljadid$^{a,b,*}$ }
 \address{$^a$Mohammed VI Polytechnic University, Green City, Morocco}
 \address{$^b$University of Ottawa, Ottawa, Canada}
%\address{Massachusetts Institute of Technology, 77 Massachusetts Avenue, Cambridge, Massachusetts 02139, USA}
%\cortext[cor]{Corresponding author} \email{abeljadi@uottawa.ca}

\begin{abstract}
%In this study, numerical investigations are  performed to validate a numerical model based on the Saint-Venant system for the prediction of waves propagation and waves run-up  on coastal zones. In our approach, we used depth averaged two-dimensional shallow water equations over variable bottom topography with bed friction effects. The resulting nonlinear system is solved using a Godunov-type finite volume method on unstructured triangular grids. A special piecewise linear reconstruction of the solution is implemented in order to ensure the second-order accuracy of the numerical model and fulfill the positivity of the computed water depth. Efficient semi-implicit techniques for the Manning friction terms and a well-balanced formulation for the bottom topography are used to exactly preserve stationary steady-state s solutions. To validate the proposed numerical model against experiments, we model several laboratory experiments for the prediction of waves run-up on sloping beaches. The numerical simulations are in good agreement with laboratory experiments which confirms the robustness and accuracy of the proposed numerical model in predicting waves propagation on coastal areas.
In this study, a numerical model preserving a class of nontrivial steady-state solutions is proposed to predict waves propagation and waves run-up on coastal zones. The numerical model is based on the Saint-Venant system with source terms due to variable bottom topography and bed friction effects. The resulting nonlinear system is solved using a Godunov-type finite volume method on unstructured triangular grids. A special piecewise linear reconstruction of the solution is implemented with a correction technique to ensure the accuracy of the method and the positivity of the computed water depth. Efficient semi-implicit techniques for the friction terms and a well-balanced formulation for the bottom topography are used to exactly preserve stationary steady-state s solutions. Moreover, we prove that the numerical scheme preserves a class of nontrivial steady-state solutions. To validate the proposed numerical model against experiments, we first demonstrate its ability to preserve nontrivial steady-state solutions and then we model several laboratory experiments for the prediction of waves run-up on sloping beaches. The numerical simulations are in good agreement with laboratory experiments which confirms the robustness and accuracy of the proposed numerical model in predicting waves propagation on coastal areas.
\end{abstract}
\begin{keyword} Coastal areas; wave run-up/run-down; shallow water model; finite volume method; well-balanced discretization, positivity preserving property.
\end{keyword}

\end{frontmatter}
% environment free-surface applications

\section{Introduction}
Coastal areas involve several complex natural processes such as surface flows, sediment transport, soil erosion and moving shorelines. The propagation of waves on coastal areas near urban zones can have negative environmental impacts and cause considerable damages \cite{clare2022assessing,satake2005tsunamis,ko2019study}. 
%modeling of waves  is very  essential for studying its impact on coastal areas, quantifying the changes of coastal setback lines and prevent its risks.
Understanding the dynamics of flow waves and predict its effects on coastal areas is necessary for developing solutions for sustainable water management and reducing water-related hazard including environment risks. 
Coastal wave propagation is mainly affected by the complex geometry of nearshore zones and the bottom topography. The rugged topography can cause many wave transformations such as wave refraction, diffraction, and breaking as the wave approaches the shoreline \cite{afzal2022propagation,guo2012numerical,xie2020two}. Wave run-up at the coastline is mainly depends on the offshore wave conditions such as height, length, and velocity of waves and on the coastal geometry and topography \cite{dodet2018wave}.
%Waves propagation on coastal areas are very affected by the bottom topography  where many wave transformations are involved such as the waves refraction, diffraction and breaking as the wave approaches the shoreline \cite{afzal2022propagation,guo2012numerical,xie2020two}.
%The waves propagating in coastal area are very affected by the bottom topography which changes in wave direction involving the wave transformation processes like wave refraction and diffraction. Furthermore, the complex topography can rise in wave height to reach up a critical level leading to the breaking of the wave as it approaches the shoreline \cite{afzal2022propagation,guo2012numerical,xie2020two}. 
Several previous studies have been devoted  to predict waves propagation and determine the height of run-up along the coastlines \cite{von2022overland,varing2021spatial,bellotti2020modal,wu2021evolution,
gedik2006experimental,delis2008robust,liu2011numerical,zhu2017numerical,
dominguez2019towards,farhadi2015numerical,dodd1998numerical}.  
  Synolakis \cite{synolakis1986runup,synolakis1987runup} performed theoretical and experimental study to analyze the evolution of non-breaking and breaking solitary waves and  approximated the wave maximum run-up, as well as the breaking criterion when the wave climbs up the sloping breach.  Madsen and Mei \cite{madsen1969transformation} investigated the transformation of the solitary  wave  over an uneven bottom topography and showed that the wave height rises depending on the slope and its initial height.  Kaplan \cite{kaplan1955generalized} studied the evolution of periodic waves and derived an empirical formula for wave maximum run-up over a sloping beach.

The non-linear shallow water equations are among the commonly used models to describe the propagation of waves over variable bottom topography.  These equations are derived from a depth-averaged integration of the 3D incompressible Navier–Stokes equations under the hydrostatic pressure assumption \cite{de1871theorie}. They are widely used for modeling water flows in  lakes, rivers, and coastal areas \cite{garcia2019shallow,hernandez2016central,beljadid2012numerical,beljadid2013unstructured,delestre2017fullswof,ricchiuto2009stabilized,karjoun2022modelling}.
 Titov and Synolakis \cite{titov1995modeling} tested the efficiency of the shallow water equations for modeling the evolution of breaking and non-breaking solitary waves on sloping beaches based on data from laboratory experiments. Delis et al. \cite{delis2008robust} developed a numerical model based on shallow water equations and investigated its accuracy to simulate long waves.
  Hu et al. \cite{hu2000numerical} used nonlinear shallow water equations to perform numerical simulations of wave overtopping of coastal structures. Brocchini and Dodd \cite{brocchini2008nonlinear} modeled nearshore flows using nonlinear shallow water equations and analyzed the interdependence between physical phenomena, model equations and numerical methods.
  
Various numerical methods have been developed for solving the shallow water equations modeling flows over variable bottom topography  \cite{liang2009least,leveque2002finite,liang2009numerical,george2008augmented,zhou2002numerical,mader2004numerical}. The finite volume technique is one of the most popular tools used for solving the system of shallow water equations due to its capability to conserve mass and momentum, and accurately computes solutions with sharp gradients. Central-upwind finite volume methods are among the most efficient numerical tools developed for solving the system of conservation laws \cite{kurganov2001semidiscrete,kurganov2005central}. The central-upwind schemes are Godunov-type  free Riemann solver, which are based on the information obtained from the local speeds of wave propagation to approximate the numerical fluxes at the cell interfaces. Many extensions of these schemes have been proposed for the system of shallow water flow due to their robustness, high resolution and simplicity \cite{kurganov2002central,beljadid2016well,liu2017coupled,beljadid2017central,hernandez2016central,shirkhani2016well,hanini2021well}. Furthermore, central-upwind schemes can be well-balanced and positivity preserving, that is, they accurately compute stationary steady solutions of the model and maintain the non-negativity of the computed water depth at the discrete level \cite{kurganov2007s,chertock2015well,bryson2011well}.

In this study, the depth-averaged 2-D shallow water equations are used for modeling waves propagation and waves run-up at the coastlines. The source terms due to variable bottom topography and bed friction effect are taken into account. %The bottom friction effects are modeled using the Manning equation \cite{yamazaki2009depth,chertock2015well,delis2008robust}. 
The shallow water system is solved using an unstructured finite volume method based on central-upwind techniques \cite{bryson2011well,kurganov2005central}. We used efficient semi-implicit techniques for the friction term and  a well-balanced discretization for the source term due to variable topography. Beside the well-balanced property of the lack at rest, we established that the numerical scheme preserves nontrivial steady-state solutions over a slanted surface. Furthermore, a piecewise linear approximation of the solution is implemented at the discrete level to ensure the accuracy of numerical scheme while guaranteeing the non-negativity of the computed water depth. Numerical simulations are performed to test the efficiency and the capability of the numerical model for predicting waves propagation and waves run-up on coastal areas.

The outline of this paper is as follows. We present the shallow water model with source terms due to variable bottom topography and bed friction effects in Section $\ref{S2}$. The resulting nonlinear system of the shallow water flow model is solved using unstructured finite volume central-upwind technique on triangular meshes in Section $\ref{S3}$. In Section $\ref{S4}$, we perform numerical simulations to validate the proposed numerical model and test its capability for simulating waves propagation on coastlines. The numerical model is validated against laboratory experimental data and we perform numerical simulations to predict waves propagation along a sloping beach with complex bottom topography. Finally, some concluding remarks are provided in Section  $\ref{S5}$.
\section{Governing equations} \label{S2}
\subsection{Shallow water equations}
In this study, we consider the shallow water equations for modeling waves propagation and waves run-up on the coastal areas. Let $h(x,y,t) [m]$ be the water depth above the bottom topography $B(x,y)[m]$, and $\bm{u}=(u(x,y,t),v(x,y,t))^T [m/s]$ be the velocity field of the flow as shown in Figure \ref{Fig1}. In a two-dimensional space, the shallow water equations with source terms due to variable bottom topography and bed friction effects can be written as follows:
\begin{equation}
\left\{\begin{aligned}
&\partial_t h+\partial_x hu+\partial_y hv=0,\\& \partial_t hu+\partial_x\Big(hu^2+\frac{g}{2}h^2\Big)+\partial_y\Big(huv\Big)=-gh\partial_x B-C_f u \sqrt{u^2+v^2},\\& \partial_t hv+\partial_x\Big(huv\Big)+\partial_y \Big(hv^2+\frac{g}{2}h^2\Big)=-gh\partial_y B-C_f v \sqrt{u^2+v^2},
\end{aligned}\right.
\label{Eq1}
\end{equation}
where $t$ represents the time, $x$ and $y$ are the cartesian coordinates and $g$ is the gravity acceleration. The source terms in the momentum equations are the contributions of the bottom topography $B$ and the friction terms $C_f u \sqrt{u^2+v^2}$ and $C_f v \sqrt{u^2+v^2}$, where the bed roughness coefficient $C_f$ is computed using the Manning formula:
\begin{equation}
C_f=\dfrac{g n_f^2 }{h^{1/3}},\\ 
\label{Eq2}
\end{equation}
with $n_f$ being the Manning coefficient which represents the bed hydraulic resistance to flow. The frictional resistance may have higher effects on flow velocity as the water depth decreases since the roughness coefficient $C_f$ increases by decreasing the water depth \cite{brocchini2001efficient}.
%Due to the appearance of water depth in the denominator of the Manning formulation Eq. (\ref{Eq2}),  the frictional resistance may have higher effects on flow velocity as the water depth decreases \cite{brocchini2001efficient}.
% The Manning formulation Eq. (\ref{Eq2}) is widely used in the previous studies to compute the flow velocity in runoff and erosion models.
\begin{figure}[h!]
\begin{center}
\includegraphics[scale=0.45]{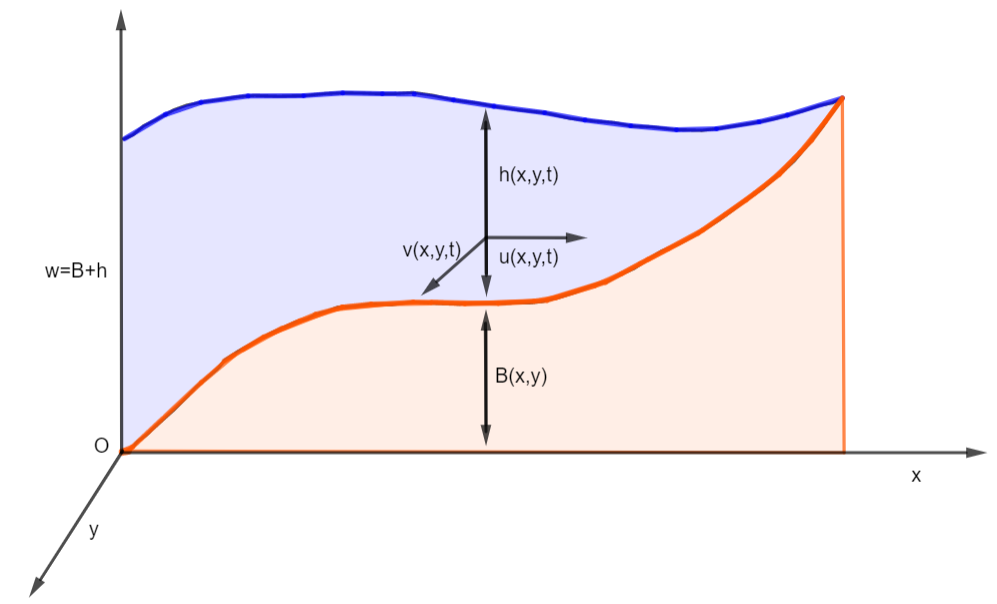}
\caption{Schematic of the shallow water model variables on a coastal area.}
\label{Fig1}
\end{center}
\end{figure}

The system of shallow water equations (\ref{Eq1}) can be expressed in the following matrix form:
\begin{equation}
\partial_t U+\partial_x F(U,B)+\partial_y G(U,B)=S(U,B)+M,
\label{Eq3}
\end{equation}
where we define new conservative variables $U:=[w,q_x  ,q_y]^T$ of the system (\ref{Eq1}) and denote by $w:=B+h$ the free-surface elevation and $q_x:=uh$ and $q_y:=vh$ the water discharges along the coordinate directions $Ox$ and $Oy$, respectively. The vectors of fluxes, bottom topography and friction term $(F,G)^T$, $S$ and $M$ respectively, are defined by:
\begin{equation}
\begin{aligned}
&F:=\left[\begin{aligned}
&q_x\\ 
&\dfrac{q_x^2}{(w-B)}+\dfrac{g}{2 }(w-B)^2\\
&\dfrac{q_x q_y}{(w-B)}
\end{aligned}\right],\, G:=\left[\begin{aligned}
&q_y\\ 
&\dfrac{q_x q_y}{ (w-B)}\\
&\dfrac{q_x^2}{(w-B)}+\dfrac{g}{2}(w-B)^2
\end{aligned}\right],\\
&S:=\left[\begin{aligned}
&0\\ 
&-g(w-B)\partial_x B\\
&-g(w-B)\partial_y B
\end{aligned}\right],\, M:=\left[\begin{aligned}
&0\\ 
&-\dfrac{g n_f^2}{h^{7/3}}q_x\sqrt{q_x^2+q_y^2}\\
&-\dfrac{g n_f^2}{h^{7/3}}q_y\sqrt{q_x^2+q_y^2} 
\end{aligned}\right],
\end{aligned}
\label{Eq4}
\end{equation}
The Jacobian matrix of the shallow water system (\ref{Eq3})-(\ref{Eq4}) has the following eigenvalues:
\begin{equation}
\bm{\lambda}_1=u n_x+v n_y -\sqrt{gh}, \,\quad\quad  \bm{\lambda}_2=u n_x+v n_y, \,\quad\quad  \bm{\lambda}_3=u n_x+v n_y +\sqrt{gh},
\label{lamda}
\end{equation}
which are used in the numerical scheme to compute the one-side local speeds of wave propagation, where $\bm{n}=[n_x,n_y]^T$ is the unit normal vector to the cell interfaces of control volumes used in our numerical methodology in Section \ref{S3}.
\subsection{Stationary steady-state solutions}
The  shallow water equations (\ref{Eq1}) with source terms is a system of balance laws. Under some particular initial conditions, this system has trivial steady-state  solutions of the “lack at rest” ($\bm{u}=0$) in the form:
\begin{equation} 
q_x\equiv q_y\equiv 0, \,\, \text{and}\,\, w=h+B\equiv constant.
\label{Eq5}
\end{equation}
Furthermore, the shallow water system has a nontrivial steady-state  solution which corresponds to the situation of steady flow with constant water depth and non-vanishing velocity over an inclined topography \cite{chertock2015well}:
\begin{equation}
\begin{aligned}
&h=h_0\equiv constant, q_x=q_0\equiv constant, q_y\equiv 0, \partial_x B=-B_0\equiv constant ,\text{and}\, \partial_y B\equiv 0, \, \text{or}\\
&h=h_0\equiv constant, q_x\equiv 0, q_y=q_0\equiv constant, \partial_x B\equiv0 ,\text{and}\, \partial_y B=-B_0\equiv constant. 
\end{aligned}
\label{Eqs6}
\end{equation}
The expression of the water depth can be obtained by replacing the solution (\ref{Eqs6}) in the momentum equations of the shallow water system (\ref{Eq1}).
\begin{equation}
h_0=\left(\dfrac{n_f^2 q_0^2}{B_0} \right)^{3/10}.
\end{equation}
The aforementioned trivial and nontrivial steady-state solutions (\ref{Eq5})-(\ref{Eqs6}) are used to test the proposed numerical model in terms of well-balanced and accuracy.   
\section{Numerical methodology}\label{S3}
\subsection{Numerical  scheme}
In this section, we briefly describe the finite volume central-upwind scheme on unstructured triangular grids applied to solve the system of shallow water equations (\ref{Eq1}) \cite{bryson2011well}. The computational domain is partitioned into  triangular cells $\bm{T}_j$ of size $|\bm{T}_j |$, and centers of mass $C_j:=(\bar{x}_{j},\bar{y}_{j} )$. We denote by $(\partial \bm{T}_j)_{k}$ the common interface of the cell $ \bm{T}_j$  and its neighboring cells $ \bm{T}_{jk}$ of length $d_{jk}$, $ k=1,2,3$. Let  $\bm{n}_{jk}:=(cos(\theta_{jk} ),sin(\theta_{jk} ) )$ be the unit vector normal to $(\partial \bm{T}_j)_{k}$ pointed towards to the cell $\bm{T}_{jk}$, and $(x_{jk},y_{jk} )$ be the coordinates of the midpoint $N_{jk}$ of the cell interface $(\partial \bm{T}_j)_{k}$ with corresponding vertices $N_{jk_i}$ of coordinates $(x_{jk_i},y_{jk_i})$, $i=1,2$ as shown in Figure \ref{fig01}.
%\begin{figure}[h!]
%\begin{center}
%\includegraphics[scale=0.5]{}
%\caption{Schematic of the triangular cells.}
%\label{Fig2}
%\end{center}
%\end{figure}
\begin{figure}[!ht]
\centering{}
\begin{tikzpicture}[y=1.3cm, x=1.3cm,font=\scriptsize]
\draw  (3.6,2.9) -- (6.6,2.7) ;
\draw  (3.6,2.9) -- (5,5) ;
\draw  (6.6,2.7)  -- (5,5) ;
\draw [thick,dashed] (2,4.7)  -- (5,5) ;
\draw  [thick,dashed](2,4.7)  -- (3.6,2.9);
\draw  [thick,dashed](8.5,4.5)  -- (5,5);
\draw  [thick,dashed](8.5,4.5)  --  (6.6,2.7) ;
\draw [thick,dashed] (4.7,1)  --  (6.6,2.7) ;
\draw [thick,dashed] (4.7,1)  --   (3.6,2.9);
\definecolor{blue}{rgb}{0,0.6,0.4}  %Green
\fill [blue, opacity=0.2]  (3.6,2.9) -- (6.6,2.7) -- (5,5)--cycle ;
%\draw [blue] (0,1.5) -- (10,0.5);
%\definecolor{Green}{rgb}{0.8,0.6,0.4}  %Green
%\fill [Green, opacity=0.5]  (0,1) -- (10,0) -- (0.,0) --cycle ;
 %\draw [-stealth](4.3,3.95) --(3.8 ,4.2);
  \draw  (4.3,4.09) node[anchor=north] {$\bullet$ };
  %\draw [.-stealth](5.1,2.8) --(5.09,2.3);
  \draw [.-stealth](5.8,3.8) --(6.2,4.1);
  \draw  (5.83,3.95) node[anchor=north] {$\bullet$ };
    \draw  (5.1,2.93) node[anchor=north] {$\bullet$ };
%\draw (5.5,0.3) node[anchor=north] {$B(x)$ };
\draw (4.9,1.8) node[anchor=north] { };
\draw (7.5,4.5) node[anchor=north] {$\bm{T}_{j_k}$ };
\draw (3,4.5) node[anchor=north] {};
\draw  (5,3.8) node[anchor=north] {$\bullet$ };
\draw (4.8,3.9) node[anchor=north] {$C_j$ };
\draw (6.6,4.2) node[anchor=north] {$C_{jk}$ };
\draw  (6.88,4.1) node[anchor=north] {$\bullet$ };
\draw (5,4.5) node[anchor=north] {$\bm{T}_{j}$ };
\draw (5.6,3.95) node[anchor=north] {$N_{j_k}$ };
\draw (5.8,4.6) node[anchor=north] {$(\partial \bm{T}_j)_{k}$ };
\draw (5.1,3.1) node[anchor=north] { };
\draw (4.5,4) node[anchor=north] {};
\draw (3.8 ,4.5) node[anchor=north] {};
\draw (5.09,2.3) node[anchor=north] { };
\draw (6.2,4.) node[anchor=north] {$\bm{n}_{j_k}$ };
\draw (7,2.9) node[anchor=north] {$N_{{jk}_{1}}$ };
\draw (3.3,3) node[anchor=north] {};
\draw (5,5.5) node[anchor=north] {$N_{{jk}_{2}}$};
%\draw[very thick] (0,0.) -- (10.,0.);
\end{tikzpicture}
\caption{Schematic of triangular cells}
\label{fig01}
\end{figure}
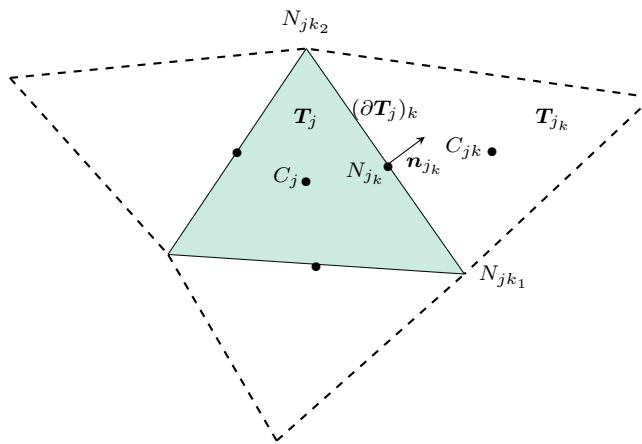

We denote by $\bm{\bar{U}}_j$ the computed solution in the finite volume framework, and $\bm{\bar{S}}_j$ and $\bm{\bar{M}}_j$ the average values of the source terms over the cell $\bm{T_j}$, which are defined as follows:
\begin{equation}
\bar{\bm{U}}_j(t)=\dfrac{1}{|\bm{T}_j|}  \int_{\bm{T}_j} {U}(x,y,t)dxdy,\, \,  \bar{\bm{S}}_j(t)=\dfrac{1}{|\bm{T}_j|}  \int_{\bm{T}_j} {S}(x,y,t)dxdy,\, \, \bar{\bm{M}}_j(t)=\dfrac{1}{|\bm{T}_j|}  \int_{\bm{T}_j} {M}(x,y,t)dxdy,
\end{equation}
The central-upwind scheme on triangular meshes is applied to the system of shallow water equations (\ref{Eq1})-(\ref{Eq4}), which is given by the following semi-discrete equation \cite{bryson2011well}:

\begin{equation}
\begin{aligned}
\dfrac{\partial \bar{\bm{U}}_j }{\partial t}=&-\dfrac{1}{|\bm{T}_j|}\sum_{k=1}^3\dfrac{d_{jk}\cos(\theta_{jk})}{b_{jk}^{in}+b_{jk}^{out}}\Big[ b_{jk}^{in}F\left(U_{jk}(N_{jk}),B_{jk}\right)  +b_{jk}^{out}F\left(U_{j}(N_{jk}),B_{jk}\right)  \Big]\\
&-\dfrac{1}{|\bm{T}_j|}\sum_{k=1}^3\dfrac{d_{jk}\sin(\theta_{jk})}{b_{jk}^{in}+b_{jk}^{out}}\Big[ b_{jk}^{in}G\left(U_{jk}(N_{jk}),B_{jk}\right)  +b_{jk}^{out}G\left(U_{j}(N_{jk}),B_{jk}\right)  \Big]\\
&+\dfrac{1}{|\bm{T}_j|}\sum_{k=1}^3\dfrac{d_{jk}b_{jk}^{in}b_{jk}^{out}}{b_{jk}^{in}+b_{jk}^{out}}\Big[ U_{jk}(N_{jk})-U_{j}(N_{jk})  \Big]+\bm{\bar{S}}_j+\bm{\bar{M}}_j.
\end{aligned}
\label{Scheme}
\end{equation}
In the numerical scheme (\ref{Scheme}), $U_ j(N_{jk})$ and $U_{jk}(N_{jk})$ are respectively the reconstructed values of the solution at the left and right of cell interfaces $(\partial \bm{T}_j)_{k}$, computed at the midpoint $N_{jk}$ and $B_{jk}$ are the corresponding reconstructed values of the bottom topography $B$. The values $b_{jk}^{in}$ and $b_{jk}^{out}$ are the one side-local speeds of wave propagation computed using the small and large eigenvalues ${\bm{\lambda}}_1$ and $\bm{\lambda}_3$ of the Jacobian matrix of the shallow water system,  given in Eq. (\ref{lamda}) \cite{bryson2011well,kurganov2005central},
\begin{equation}
\begin{aligned}
&b_{jk}^{\rm in}=-\min\big\{\bm{\lambda}_j^{\theta}-\sqrt{gh_j(N_{jk})},\bm{\lambda}_{jk}^{\theta}-\sqrt{gh_{jk}(N_{jk})},0\big\},\\
&b_{jk}^{\rm out}=\max\big\{\bm{\lambda}_j^{\theta}+\sqrt{gh_j(N_{jk})},\bm{\lambda}_{jk}^{\theta}+\sqrt{gh_{jk}(N_{jk})},0\big\},\\
\end{aligned}
\label{Eq6}
\end{equation}
where
\begin{equation}
\begin{aligned}
\bm{\lambda}^\theta_{jk}&:=\cos(\theta_{jk})u_{jk}(N_{jk})+\sin(\theta_{jk})v_{jk}(N_{jk}),\\
\bm{\lambda}^\theta_j&:=\cos(\theta_{jk})u_j(N_{jk})+\sin(\theta_{jk})v_j(N_{jk}).
\end{aligned}
\label{Eq7}
\end{equation}
The reconstructed values of the velocity at the cell interfaces, $\bm{u}_{j}$ and $\bm{u}_{jk}$ are computed using the following  desingularization  formula to prevent division by small computed values of water depth, where we omit the indices $j$ and $jk$  \cite{karjoun2022structure}
\begin{equation}
\bm{u}=\left\{\begin{aligned}
&\dfrac{1}{h} \times\left[\begin{array}{c}
q_x\\ 
q_y
\end{array} \right],\,\,\quad\quad\quad\quad\quad\quad \quad\quad\text{if}\, \, h\geq\eta , \\\
& \dfrac{\sqrt{2} h}{\sqrt{h^4+\max\lbrace h^4, \epsilon\rbrace}}\times\left[\begin{array}{c}
q_x\\ 
q_y
\end{array} \right],\quad\text{if} \, \, h< \eta,
\end{aligned}\right.
\label{EqU}
\end{equation}
where the prescribed tolerances  $\epsilon =\max_j\lbrace |\bm{T}_j |^2\rbrace$ and $\eta=10^{-6}$ are used in our numerical simulations.
\subsection{ Linear reconstructions }
In this section, we define the reconstructions of the bottom topography $B$ and the conservative variables $U$ of the system.  The bottom topography is known at the cell vertices $N_{jk_i}$, since we use a linear reconstruction over the cell $\bm{T}_j$ its values at the midpoint $N_{jk}$ are obtained using the following linear approximation \cite{bryson2011well,beljadid2016well}:

%Since the topography is a given function known at the cell vertices $B_{jk_i}=B(x_{jk_i},y_{jk_i})$, its values at the midpoint $B_{jk}$ are obtained using the following linear approximation \cite{bryson2011well,beljadid2016well,kurganov2002central}:

%The values of the bottom topography at the midpoints $B_{jk}$ are computed from its known values at the cell vetices $B_{jk_i}=B(x_{jk_i},y_{jk_i})$ using the following linear approximation \cite{bryson2011well,beljadid2016well,kurganov2002central}

\begin{equation}
B_{jk}=\dfrac{B_{jk_1}+B_{jk_2}}{2}.
\end{equation}
Similarly, the values of  the topography $\bm{B}_j$  at the center of mass $C_j$  are  computed as follows:
\begin{equation}
\bm{B}_j=\dfrac{1}{|\bm{T}_j|}\int_{\bm{T}_j}B(x,y)\,dxdy =\dfrac{B_{j1}+B_{j2}+B_{j3}}{3}.
\end{equation}
%where $\nu_k:=\mathbb A_{jk}/|D_j|$ and $\mathbb A_{jk}$ is the area of the triangle $C_jN_{jk_1}N_{jk_2}$. 
To design the second-order numerical model, the  values of vector variables at the midpoints of cell interfaces, ${U}_j(N_{jk})$ and ${U}_{jk}(N_{jk})$  are computed using the following piecewise linear reconstruction:
\begin{equation}
{U}_j(x,y):=\bar{\bm{U}}_j+(\partial \bm{U}_j)_x(x-\bar{x}_j)+(\partial \bm{U}_j)_y(y-\bar{y}_j),
\quad\label{Eq8}
\end{equation}
where $(\partial \bm{U}_j)_x$ and $(\partial \bm{U}_j)_y$ are the two components of the numerical gradient of the vector $U$ at the cell $\bm{T}_j$, computed following the approach developed in \cite{jawahar2000high}:
\begin{equation}
\begin{aligned}
(\partial\bm{U}_j^{(i)})_x=&\mathcal{W}_{j1}^{(i)}(\partial \mathcal{U}_{j1}^{(i)})_x+\mathcal{W}_{j2}^{(i)}(\partial \mathcal{U}_{j2}^{(i)})_x+\mathcal{W}_{j3}^{(i)}(\partial \mathcal{U}_{j3}^{(i)})_x,\\
(\partial \bm{U}_j^{(i)})_y=&\mathcal{W}_{j1}^{(i)}(\partial\mathcal{U}_{j1}^{(i)})_y+\mathcal{W}_{j2}^{(i)}(\partial \mathcal{U}_{j2}^{(i)})_y+\mathcal{W}_{j3}^{(i)}(\partial \mathcal{U}_{j3}^{(i)})_y,\, \text{for} \, i=1,\, 2,\,3, 
\end{aligned}
\label{Grd1}
\end{equation}
where the initial gradients $(\partial \mathcal{U}_j^{(i)})_x$ and $(\partial \mathcal{U}_j^{(i)})_y$ at the cell $\bm{T_j}$ are computed by
constructing the plane passing through the centers of mass of  three neighboring cells, $(\bar{x}_{j1},\bar{y}_{j1}, \bm{\bar{U}}_{j1}^{(i)})$, $(\bar{x}_{j2},\bar{y}_{j2}, \bm{\bar{U}}_{j2}^{(i)})$, and $(\bar{x}_{j3},\bar{y}_{j3}, \bm{\bar{U}}_{j3}^{(i)})$ for i$th$ component of the vector $U$, $i=1,\,2,\,3$.

\begin{equation}
\begin{aligned}
(\mathcal{\partial \bm{U}}_j^{(i)})_x=&\dfrac{(\bar{y}_{j3}-\bar{y}_{j1})(\bm{\bar{U}}_{j2}^{(i)}-\bm{\bar{U}}_{j1}^{(i)})-(\bar{y}_{j2}-\bar{y}_{j1})(\bm{\bar{U}}_{j3}^{(i)}-\bm{\bar{U}}_{j1}^{(i)}) }{(\bar{y}_{j3}-\bar{y}_{j1})(\bar{x}_{j2}-\bar{x}_{j1})-(\bar{y}_{j2}-\bar{y}_{j1})(x_{j3}-x_{j1})},\\
(\mathcal{\partial \bm{U}}_j^{(i)})_y=&\dfrac{(\bar{x}_{j2}-\bar{x}_{j1})(\bm{\bar{U}}_{j3}^{(i)}-\bm{\bar{U}}_{j1}^{(i)})-(\bar{x}_{j3}-\bar{x}_{j1})(\bm{\bar{U}}_{j2}^{(i)}-\bm{\bar{U}}_{j1}^{(i)}) }{(\bar{x}_{j2}-\bar{x}_{j1})(\bar{y}_{j3}-\bar{y}_{j1})-(\bar{x}_{j3}-\bar{x}_{j1})(\bar{y}_{j2}-\bar{y}_{j1})}.
\end{aligned}
\label{Grd}
\end{equation}
The weights  $\mathcal{W}_{j1}^{(i)}$, $\mathcal{W}_{j2}^{(i)}$, and $\mathcal{W}_{j3}^{(i)}$ are given by \cite{jawahar2000high}:
\begin{equation}
\begin{aligned}
\mathcal{W}_{j1}^{(i)}=&\dfrac{|| \partial \mathcal{U}_{j3}^{(i)} ||_2^2 || \partial \mathcal{U}_{j2}^{(i)} ||_2^2+\xi^2}{|| \partial \mathcal{U}_{j1}^{(i)} ||_2^4+|| \partial \mathcal{U}_{j2}^{(i)} ||_2^4+|| \partial \mathcal{U}_{j3}^{(i)} ||_2^4+3\xi^2},\\
\mathcal{W}_{j2}^{(i)}=&\dfrac{|| \partial \mathcal{U}_{j3}^{(i)} ||_2^2 || \partial \mathcal{U}_{j1}^{(i)} ||_2^2+\xi^2}{|| \partial\mathcal{U}_{j1}^{(i)} ||_2^4+|| \partial \mathcal{U}_{j2}^{(i)} ||_2^4+|| \partial \mathcal{U}_{j3}^{(i)} ||_2^4+3\xi^2},\\
\mathcal{W}_{j3}^{(i)}=&\dfrac{|| \partial \mathcal{U}_{j1}^{(i)} ||_2^2 ||  \mathcal{U}_{j2}^{(i)} ||_2^2+\xi^2}{|| \partial \mathcal{U}_{j1}^{(i)} ||_2^4+|| \partial \mathcal{U}_{j2}^{(i)} ||_2^4+|| \partial \mathcal{U}_{j3}^{(i)} ||_2^4+3\xi^2},
\end{aligned}
\label{Grd3}
\end{equation}
where $\xi=10^{-7}$ is considered to avoid division by  zero.

\subsection{Positivity reconstruction of the water depth}
In this section, we will correct the linear reconstruction ($\ref{Eq8}$) for the free-surface elevation variable $w$ to ensure the non-negativity of the reconstructed values $h_j$ and $h_{jk}$ of the water depth.
%In the previous section, the used piecewise linear reconstruction ($\ref{Eq8}$) for the water surface elevation $w$ may returns negative values for computer water depth since $h:=w-B$. Therefore, to ensure the  non-negativity of the reconstructed values of the water depth $h_j:=w_j-B_{jk}$ and $h_{jk}:=w_{jk}-B_{jk}$, we correct the original reconstruction ($\ref{Eq8}$) for $w$ using the following technique.. %the corection procedure developed in \cite{bryson2011well}. 
Indeed, the correction technique is applied for triangles where we have $w_j(x_{jk_1},y_{jk_1})<B_{jk_1}$ in some vertices. 
Following the methodology developed by Bryson et al.  \cite{bryson2011well} there are two cases where the correction technique is needed. The first case is where we have  $w_j(x_{jk_1},y_{jk_1})<B_{j_{k1}}$ and $w_j(x_{jk_2},y_{jk_2})<B_{j_{k2}}$ at the cell interface $\partial (\bm{T}_j)_k$ and the second case is where only one vertex for which $w_j(x_{jk_1},y_{jk_1})<B_{j_{k1}}$. \\
We set $w_{jk_1}^{corr}=B_{jk_1}$ at the vertices for which $w_j(x_{jk_1},y_{jk_1})<B_{j_{k1}}$ and for the rest of the vertices we use $w_{jk_1}^{corr}=\tilde{h}_j+B_{jk_1}$, where $\tilde{h}_j$ is computed using the conservation requirement:\\
\begin{equation}
\tilde{h}_j=\dfrac{3\bm{\bar{h}}_j}{\sum_{k=1}^3 {\beta_k}},\, \text{with}\, \, \, \, \beta_k=\left\{ 
\begin{array}{lll}
1, & \text{if} \,\, w_{jk_1}\geq B_{jk_1}, \\ 
0,&  \text{if} \, \,w_{jk_1}<B_{jk_1},\\

\end{array}%
\right.
\end{equation}

\begin{figure}[!ht]
\begin{center}
\includegraphics[scale=0.45]{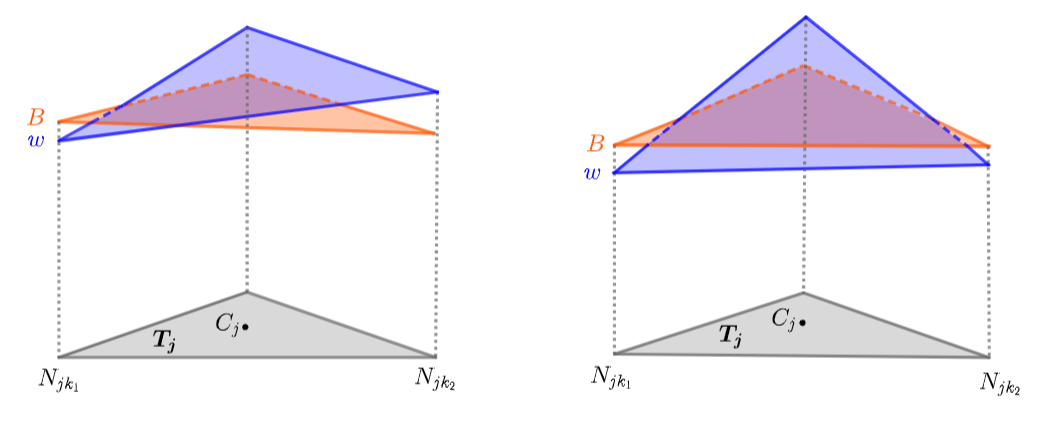}
\caption{Schematic of the two cases where a correction procedure is applied: case 1 (right) and case 2 (left).}
\label{Fig12}
\end{center}
\end{figure} 

The corrected reconstruction $h_j^{corr}$ for the water depth is conservative over the cell $\bm{T_j}$.
Indeed, denote by $\mathcal{V}^{-}$ and $\mathcal{V}^{+}$ the sets of vertices $N_{jk_1}$ for which $w_j(x_{jk_1},y_{jk_1})<B_{j_{k1}}$ and $w_j(x_{jk_1},y_{jk_1})\geq B_{j_{k1}}$ of cardinals $n^-$ and $n^+$, respectively. Using the linear approximation we have:
\begin{equation}
\begin{aligned}
\bm{\bar{h}^{corr}}_j=\dfrac{1}{|\bm{T_j}|}\int_{\bm{T_j}} h_j^{corr}(x,y)\,dxdy=&\dfrac{1}{|\bm{T_j}|}\int_{\bm{T_j}}\left[ w_j^{corr}(x,y) -B(x,y)\right]\,dxdy,\\
=&\dfrac{1}{3}\sum_{k=1}^{3}\left[ w_{jk_1}^{corr} -B_{jk_1}\right],\\
\end{aligned}
\end{equation}
where we use the values at the cell vertices $N_{jk_1}$
\begin{equation}
\begin{aligned}
\bm{\bar{h}^{corr}}_j
=&\dfrac{1}{3}\sum_{N_{jk_1}\in \mathcal{V}^-}\left[ w_{jk_1}^{corr} -B_{jk_1}\right]+\dfrac{1}{3}\sum_{N_{jk_1}\in\mathcal{V}^+}\left[  w_{jk_1}^{corr} -B_{jk_1}\right],\\
=&\dfrac{1}{3}\sum_{N_{jk_1}\in \mathcal{V}^-}h_{jk_1}^{corr}+\dfrac{1}{3}\sum_{N_{jk_1}\in \mathcal{V}^+}h_{jk_1}^{corr}.
\end{aligned}
\end{equation}
Since $h_{jk_1}^{corr}=0$ in $\mathcal{V}^-$ and $h_{jk_1}^{corr}=\tilde{h}_j$ in $\mathcal{V}^+$, then
\begin{equation}
\begin{aligned}
\bm{\bar{h}^{corr}}_j=&\dfrac{1}{3}\sum_{N_{jk_1}\in \mathcal{V}^+}h_{jk_1}^{corr}=\dfrac{1}{3}{n^+}\tilde{h}_j=\dfrac{1}{3}{n^+}\dfrac{3\bm{\bar{h}}_j}{\sum_{k=1}^3 {\beta_k}}=\bm{\bar{h}}_j.
\end{aligned}
\end{equation}
\subsection{Discretization of the source terms}
A suitable discretization of the bottom topography source term should be used to satisfy the well-balanced property of the numerical model in which the steady-state  solutions (\ref{Eq5})-(\ref{Eqs6}) are preserved. In our study, we use the approach developed in \cite{bryson2011well} where the discretization of the topography source term balances the numerical fluxes.  The well-balanced quadrature for the non-vanishing components of $\bm{\bar{S}}_j$ are: 
%The discretization of the bottom topography  source term  should be well-balanced, that is satisfy the steady-state  solutions Eq. (\ref{Eq5})-(\ref{Eqs6}). The well-balanced property of the model is achived when the discretization of the toppography source term balances the numerical fluxes. Following the approach developed in \cite{bryson2011well,beljadid2016well}, the well-balanced quadrature for the non-vanishing components of $\bm{\bar{S}}_j$ are: 
%\begin{equation}
%\begin{aligned}
%\bar{\bm{S}}_j^{(2)}&=-\dfrac{1}{|\bm{T}_j|}\int_{\bm{T}_j}g\left[w-B \right]B_x\,dxdy,\\
%\bar{\bm{S}}_j^{(3)}&=-\dfrac{1}{|\bm{T}_j|}\int_{\bm{T}_j}g\left[w-B \right]B_y\,dxdy,
%\end{aligned}
%\label{Eq100}
%\end{equation}
%which can be expressed by:
%\begin{equation}
%\begin{aligned}
%\bar{\bm{S}}_j^{(2)}&=\dfrac{1}{2|\bm{T}_j|}\int_{\bm{T}_j}g\left[\left(w-B \right)^2\right]_x\,dxdy-\dfrac{1}{|\bm{T}_j|}\int_{\bm{T}_j}g\left[w-B \right]w_x\,dxdy,\\
%\bar{\bm{S}}_j^{(3)}&=\dfrac{1}{2|\bm{T}_j|}\int_{\bm{T}_j}g\left[\left(w-B \right)^2\right]_y\,dxdy-\dfrac{1}{|\bm{T}_j|}\int_{\bm{T}_j}g\left[w-B \right]w_y\,dxdy.
%\end{aligned}
%\label{Eq10}
%\end{equation}
%To compute the integrals on the right side of Eq.(\ref{Eq10}), we use the midpoint method which yields to  the following  well-balanced discretization for $\bar{\bm{S}}_j^{(2)}$ and $\bar{\bm{S}}_j^{(3)}$:
%Using the midpoint rule to the integrals on the right hand-side of Eq.(\ref{Eq10}), we  get  the following  well-balanced discretization for the cell averages $\bar{\bm{S}}_j^{(2)}$ and $\bar{\bm{S}}_j^{(3)}$:
\begin{equation}
\begin{aligned}
\bar{\bm{S}}_j^{(2)}=&\frac{1}{2|\bm{T}_j|}\sum_{k=1}^{3}g d_{jk}[B_{jk}-w_j(N_{jk})]^2\cos(\theta_{jk})+g(\partial  \bm{{w}}_j)_x[\bm{B}_j-\bm{\bar{w}}_j],&\\
\bar{\bm{S}}_j^{(3)}=&\frac{1}{2|\bm{T}_j|}\sum_{k=1}^{3}g d_{jk}[B_{jk}-w_j(N_{jk})]^2\sin(\theta_{jk})+g(\partial \bm{{w}}_j)_y[\bm{B}_j-\bm{\bar{w}}_j].&
\end{aligned}
\label{Eq11}
\end{equation}

The bed friction term is discretized using the following semi-implicit scheme proposed in \cite{karjoun2022structure}:
%For the discretization of the bed friction terms, we use the following semi-implicit scheme proposed in \cite{karjoun2022structure}, which can be expressed as:
\begin{equation}
 \bar{\bm{M}}=\dfrac{1}{|\bm{T}_j|}  \int_{\bm{T}_j} \bm{M}(x,y,t)dxdy=-\dfrac{gn_f^2}{2}\left(\dfrac{\sqrt{\bm{\bar{q}}_{xj}^2+\bm{\bar{q}}_{yj}^2}}{\bm{\bar{h}}_j^{7/3}}\right)^n\left[ \begin{array}{c}
 0 \\ 
 \bm{\bar{q}}_{xj}^{n}+\bm{\bar{q}}_{xj}^{n+1} \\ 
  \bm{\bar{q}}_{yj}^{n}+\bm{\bar{q}}_{yj}^{n+1} 
 \end{array} \right].
 \label{Fri}
\end{equation}
We use the following desingularization technique to prevent the division by small computed values of water depth: 
\begin{equation}
 \bar{\bm{M}}=-\dfrac{gn_f^2}{2}\left(\dfrac{\bm{\bar{h}}_j^{5/3}\sqrt{\bm{\bar{q}}_{xj}^2+\bm{\bar{q}}_{yj}^2}}{\bm{\bar{h}}_j^{4}+\max\lbrace\bm{\bar{h}}_j^{4}, \epsilon\rbrace}\right)^n\left[\begin{array}{c}
 0 \\ 
 \bm{\bar{q}}_{xj}^{n}+\bm{\bar{q}}_{xj}^{n+1} \\ 
  \bm{\bar{q}}_{yj}^{n}+\bm{\bar{q}}_{yj}^{n+1} 
 \end{array} \right],
 \label{Fric}
\end{equation}
where  we use the same value of $\epsilon$ as in Eq. (\ref{EqU}). The Euler temporal method is applied to discretize  the semi-discrete equation of the central-upwind scheme (\ref{Scheme}), which yields to the following explicit expression for the discharge components $\bm{\bar{q}}_{xj}^{n+1}$ and $\bm{\bar{q}}_{yj}^{n+1}$:
\begin{equation}
\begin{aligned}
\bm{\bar{q}}_{xj}^{n+1}=&\dfrac{\bm{\bar{q}}_{xj}^{n}(1-\Delta t \Phi_j)+\Delta t\Pi_j^{q_x} +\Delta t\bar{\bm{S}}_j^{(2)}}{1+\Delta t \Phi_j},&\\
\bm{\bar{q}}_{yj}^{n+1}=&\dfrac{\bm{\bar{q}}_{yj}^{n}(1-\Delta t \Phi_j)+\Delta t\Pi_j^{q_y} +\Delta t\bar{\bm{S}}_j^{(3)}}{1+\Delta t \Phi_j},&
\end{aligned}
\end{equation}
with, $$\Phi_j=\dfrac{gn_f^2}{2}\left(\dfrac{\bm{\bar{h}}_j^{5/3}\sqrt{\bm{\bar{q}}_{xj}^2+\bm{\bar{q}}_{yj}^2}}{\bm{\bar{h}}_j^{4}+\max\lbrace\bm{\bar{h}}_j^{4}, \epsilon\rbrace}\right)^n,$$ and $\Pi_j^{q_x}=\frac{1}{|\bm{T}_j|}\sum_{k=1}^{3}d_{jk}\bm{A}_{jk}^{(2)}$ and $\Pi_j^{q_y}=\frac{1}{|\bm{T}_j|}\sum_{k=1}^{3}d_{jk} \bm{A}_{jk}^{(3)}$ with $\bm{A}_{jk}$ are the central-upwind fluxes:
\begin{equation}
\begin{aligned}
\bm{A}_{jk}=&-\dfrac{\cos(\theta_{jk})}{b_{jk}^{in}+b_{jk}^{out}}\Big[ b_{jk}^{in}F\left(U_{jk}(N_{jk}),B_{jk}\right)  +b_{jk}^{out}F\left(U_{j}(N_{jk}),B_{jk}\right)  \Big]\\
&-\dfrac{\sin(\theta_{jk})}{b_{jk}^{in}+b_{jk}^{out}}\Big[ b_{jk}^{in}G\left(U_{jk}(N_{jk}),B_{jk}\right)  +b_{jk}^{out}G\left(U_{j}(N_{jk}),B_{jk}\right)  \Big]\\
&+\dfrac{b_{jk}^{in}b_{jk}^{out}}{b_{jk}^{in}+b_{jk}^{out}}\Big[ U_{jk}(N_{jk})-U_{j}(N_{jk})  \Big].
\end{aligned}
\end{equation}
\subsection{Nontrivial well-balanced property}
In this section, we prove the nontrivial well-balanced property of the central-upwind scheme (\ref{Scheme}) through the following theorem.
%that the numerical scheme (\ref{Scheme}) preserves nontrivial steady-state  solutions (\ref{Eqs6}).
\begin{theorem}
The numerical scheme (\ref{Scheme})-(\ref{Fri}) and the forward Euler time discretization for solving the shallow water equations (\ref{Eq1})-(\ref{Eq4}) preserves the nontrivial steady-state solutions (\ref{Eqs6}).
\end{theorem}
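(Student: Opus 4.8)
The plan is to verify the discrete steady state directly: assuming the cell averages $\bar{\bm U}_j^{\,n}$ coincide exactly with the profile (\ref{Eqs6}) at time level $n$, I will show that one forward-Euler step with the semi-implicit friction update returns $\bar{\bm U}_j^{\,n+1}=\bar{\bm U}_j^{\,n}$ on every cell $\bm T_j$. I treat the first family in (\ref{Eqs6}) ($q_y\equiv0$, $\partial_y B\equiv0$); the second follows by interchanging the roles of $x$ and $y$. The starting observation is that, since $\partial_x B\equiv -B_0$ is constant, the free surface $w=h_0+B$ is an \emph{affine} function of $x$, while $h\equiv h_0$, $q_x\equiv q_0$ and $q_y\equiv0$ are constant.

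First I would establish that the reconstruction machinery is exact on these fields. For affine $w$ the three candidate gradients built from neighbouring centroids in (\ref{Grd}) all equal the true gradient $\nabla w=(-B_0,0)$, so the weights (\ref{Grd3}) reduce to $1/3$ each and (\ref{Grd1}) returns $\nabla w$ exactly; the constant discharges are reconstructed with zero slope. Since $\bar{\bm w}_j$ equals the centroid value of an affine field, the reconstruction (\ref{Eq8}) reproduces $w$, $q_x$, $q_y$ pointwise in $\bm T_j$. Two consequences follow: (i) the midpoint states seen from the two sides of each interface coincide, $U_j(N_{jk})=U_{jk}(N_{jk})=:U^\ast$, and, using $B_{jk}=B(N_{jk})$ for affine $B$, the reconstructed depths satisfy $h_j(N_{jk})=h_{jk}(N_{jk})=h_0>0$, so the positivity correction of Section~\ref{S3} is never triggered; (ii) the numerical-diffusion term $U_{jk}(N_{jk})-U_j(N_{jk})$ in (\ref{Scheme}) vanishes and each central-upwind flux collapses, \emph{independently of the local speeds $b_{jk}^{\rm in},b_{jk}^{\rm out}$}, to a single physical flux $F(U^\ast,B_{jk})$, $G(U^\ast,B_{jk})$.

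Next I would evaluate these fluxes. At $U^\ast$ one has $F^{(1)}=q_0$, $F^{(2)}=q_0^2/h_0+\tfrac g2 h_0^2$, $F^{(3)}=0$ and $G^{(1)}=0$, $G^{(2)}=0$, $G^{(3)}=\tfrac g2 h_0^2$, all \emph{constant} along the three edges. Hence every flux sum in (\ref{Scheme}) factors a constant times $\sum_{k}d_{jk}\cos\theta_{jk}$ or $\sum_{k}d_{jk}\sin\theta_{jk}$, both of which vanish by the closed-contour identity $\sum_{k=1}^3 d_{jk}\bm n_{jk}=\bm 0$. The same identity annihilates the quadrature part of (\ref{Eq11}), because $[B_{jk}-w_j(N_{jk})]^2=h_0^2$ is constant; the surviving contributions are $\bar{\bm S}_j^{(2)}=g(\partial \bm w_j)_x(\bm B_j-\bar{\bm w}_j)=g(-B_0)(-h_0)=gB_0h_0$ and $\bar{\bm S}_j^{(3)}=0$ (since $(\partial \bm w_j)_y=0$). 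Thus $\Pi_j^{q_x}=\Pi_j^{q_y}=0$, the first (continuity) component is unchanged at once, and the $q_y$ update stays at $0$.

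The crux is the friction--gravity balance in the $q_x$ update. Since $h_0>0$ lies outside the desingularization regime, the friction coefficient in (\ref{Fri}) reduces to $\Phi_j=\frac{gn_f^2 q_0}{2h_0^{7/3}}$. Imposing $\bar{\bm q}_{xj}^{\,n+1}=\bar{\bm q}_{xj}^{\,n}=q_0$ in the semi-implicit update and cancelling the common terms leaves the single scalar condition $2q_0\Phi_j=\bar{\bm S}_j^{(2)}$, i.e. $gn_f^2q_0^2/h_0^{7/3}=gB_0h_0$, which rearranges to $h_0=(n_f^2q_0^2/B_0)^{3/10}$ --- precisely the relation already fixed in Section~\ref{S2}. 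This closes the argument. The step I expect to be delicate is exactly this last one: one must match the averaging $(\bar{\bm q}^{\,n}+\bar{\bm q}^{\,n+1})$ and the coefficient $1/h_j^{7/3}$ of the semi-implicit friction against the well-balanced topography quadrature, and confirm that the positivity correction and the velocity/friction desingularizations are all inactive at $h=h_0$, since this is the only place where the particular value of $h_0$ enters and where a spurious factor would break the balance.
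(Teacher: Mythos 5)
Your proposal is correct and follows essentially the same route as the paper's proof: direct verification of one forward-Euler step, using exactness of the linear reconstruction on affine/constant data so that $(\partial \bm{w}_j)_x=-B_0$ and the interface jumps vanish, the geometric identity $\sum_k d_{jk}\bm{n}_{jk}=\bm{0}$ to kill the flux and quadrature sums, and the friction--gravity balance $2q_0\Phi_j=gB_0h_0$ closed by $h_0=(n_f^2q_0^2/B_0)^{3/10}$. The only difference is that you make explicit a few points the paper leaves implicit (inactivity of the positivity correction and desingularizations, and the collapse of the central-upwind flux independently of $b_{jk}^{\rm in},b_{jk}^{\rm out}$), which is sound but not a different argument.
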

\begin{proof} We consider a rectangular domain and an inclined topography in the $x-$direction with $(\partial\bm{B}_j)_x=-B_0\, \, \text{and} \,\, (\partial\bm{B}_j)_y= 0$, where $B_0$ is a constant. We assume that at time $t^n$:
\begin{equation}
\bm{\bar{h}}_{j}^n=h_0=\left(\dfrac{n_f^2 q_0^2}{B_0} \right)^{3/10}, \,\bm{\bar{q}}_{xj}^{n}=q_0,\,\bm{\bar{q}}_{yj}^{n}=0,
\label{Cnd}
\end{equation}
and the following boundary conditions: $u_{jk}=u_{j}$,  $v_{jk}=v_{j}$ and $h_{jk}=h_{j}$ in $x-$ direction and $u_{jk}=u_{j}$, $v_{jk}=0$ and $h_{jk}=h_{j}$ in $y-$direction.\\
We will prove that $\bm{\bar{h}}_{j}^{n+1}=h_0$, $\bm{\bar{q}}_{xj}^{n+1}=q_0$ and $\bm{\bar{q}}_{yj}^{n+1}=0$ at time $t^{n+1}$.  By applying the Euler time discretization to the semi-discrete scheme (\ref{Scheme}), we obtain:

\begin{equation}
\begin{aligned}
 \bar{\bm{U}}_j^{n+1} = \bar{\bm{U}}_j^n&-\dfrac{\Delta t}{|\bm{T}_j|}\sum_{k=1}^3\dfrac{d_{jk}\cos(\theta_{jk})}{b_{jk}^{in}+b_{jk}^{out}}\Big[ b_{jk}^{in}F\left(U_{jk}(N_{jk}),B_{jk}\right)  +b_{jk}^{out}F\left(U_{j}(N_{jk}),B_{jk}\right)  \Big]\\
&-\dfrac{\Delta t}{|\bm{T}_j|}\sum_{k=1}^3\dfrac{d_{jk}\sin(\theta_{jk})}{b_{jk}^{in}+b_{jk}^{out}}\Big[ b_{jk}^{in}G\left(U_{jk}(N_{jk}),B_{jk}\right)  +b_{jk}^{out}G\left(U_{j}(N_{jk}),B_{jk}\right)  \Big]\\
&+\dfrac{\Delta t}{|\bm{T}_j|}\sum_{k=1}^3\dfrac{d_{jk}b_{jk}^{in}b_{jk}^{out}}{b_{jk}^{in}+b_{jk}^{out}}\Big[ U_{jk}(N_{jk})-U_{j}(N_{jk})  \Big]+\Delta t\bm{\bar{S}}_j+\Delta t\bm{\bar{M}}_j.
\end{aligned}
\label{Steady}
\end{equation} 
 
The proof is presented in three steps as follows:\\
\emph{-Step 1: Constant water depth}. Under the conditions (\ref{Cnd}) the reconstruction (\ref{Eq8}) leads to $h_j=h_{jk}=h_0$, $q_{xj}=q_{xjk}=q_0$ and  $q_{yj}=q_{yjk}=0$ at cell interfaces. Then, the scheme (\ref{Steady}) for the continuity equation yields to:
\begin{equation}
\begin{aligned}
 \bar{\bm{w}}_j^{n+1} = \bar{\bm{w}}_j^n&-\dfrac{\Delta t}{|\bm{T}_j|}\sum_{k=1}^3\dfrac{d_{jk}\cos(\theta_{jk})}{b_{jk}^{in}+b_{jk}^{out}}\Big[ b_{jk}^{in}q_0  +b_{jk}^{out}q_0   \Big]\\
&+\dfrac{\Delta t}{|\bm{T}_j|}\sum_{k=1}^3\dfrac{d_{jk}b_{jk}^{in}b_{jk}^{out}}{b_{jk}^{in}+b_{jk}^{out}}\Big[ w_{jk}(N_{jk})-w_{j}(N_{jk})  \Big].
\end{aligned}
\label{sth}
\end{equation} 
Since ${w}={h}+{B}$ and the reconstruction of the bottom topography $B$ is continuous, then $w_{jk}(N_{jk})-w_j(N_{jk})=h_{jk}(N_{jk})-h_j(N_{jk})=0$ and Eq. (\ref{sth}) reduces to: 
\begin{equation}
\begin{aligned}
 \bar{\bm{h}}_j^{n+1} =&\bar{\bm{h}}_j^n-\dfrac{\Delta t}{|\bm{T}_j|}\sum_{k=1}^3\dfrac{d_{jk}\cos(\theta_{jk})}{b_{jk}^{in}+b_{jk}^{out}}\Big[ b_{jk}^{in}+b_{jk}^{out}\Big] q_0,\\
=& h_0-\dfrac{\Delta t}{|\bm{T}_j|}\sum_{k=1}^3 d_{jk}\cos(\theta_{jk})q_0.
\end{aligned}
\label{sth2}
\end{equation} 
Thanks to $\sum_{k=1}^3 d_{jk}\cos(\theta_{jk})=0$, we have $\bar{\bm{h}}_j^{n+1}=h_0$ which shows that the computed water depth remains constant over the domain at time $t^{n+1}$.\\
\emph{-Step 2: Constant water discharge $q_x$}. Similarly to the water depth, by incorporating the conditions (\ref{Cnd}) in the numerical  scheme (\ref{Steady}) we obtain the following expression for the discharge variable $q_x$:
\begin{equation}
\begin{aligned}
 \bar{\bm{q}}_{xj}^{n+1} =& q_0-\dfrac{\Delta t}{|\bm{T}_j|}\sum_{k=1}^3\dfrac{d_{jk}\cos(\theta_{jk})}{b_{jk}^{in}+b_{jk}^{out}}\Big[ b_{jk}^{in}\left(\frac{q_0^2}{h_0}+\frac{1}{2}gh_0^2\right)+b_{jk}^{out}\left(\frac{q_0^2}{h_0}+\frac{1}{2}gh_0^2\right) \Big]\\
&+\frac{\Delta t g}{2|\bm{T}_j|}\sum_{k=1}^{3}d_{jk}\cos(\theta_{jk})h_0^2-g\Delta t h_0(\partial  \bm{{w}}_j)_x \\
&-\dfrac{\Delta t g n_f^2 q_0}{2h_0^{7/3}}[q_0+\bar{\bm{q}}_{xj}^{n+1}],\\ 
 =& q_0-\dfrac{\Delta t}{|\bm{T}_j|}\sum_{k=1}^3d_{jk}\cos(\theta_{jk})\left(\frac{q_0^2}{h_0}+\frac{1}{2}gh_0^2\right)  \\
&+\frac{\Delta t g}{2|\bm{T}_j|}\sum_{k=1}^{3}d_{jk}\cos(\theta_{jk})h_0^2-g\Delta t h_0(\partial  \bm{{w}}_j)_x \\
&-\dfrac{\Delta t g n_f^2 q_0}{2h_0^{7/3}}[q_0+\bar{\bm{q}}_{xj}^{n+1}].
\end{aligned}
\label{Stqx}
\end{equation} 
Due to $\sum_{k=1}^3 d_{jk}\cos(\theta_{jk})=0$ at each cell, we obtain:
\begin{equation}
\begin{aligned}
 \bar{\bm{q}}_{xj}^{n+1} = q_0-g\Delta t h_0(\partial  \bm{{w}}_j)_x-\dfrac{\Delta t g n_f^2 q_0}{2h_0^{7/3}}[q_0+\bar{\bm{q}}_{xj}^{n+1}].
\end{aligned}
\label{Stqx1}
\end{equation}
From the definition of the numerical gradient in Eqs. (\ref{Grd1})-(\ref{Grd3}), we have $(\partial  \bm{{w}}_j)_x=(\partial  \bm{{B}}_j)_x=-B_0$. Indeed, recall that the topography is known at the cell vertices and the reconstruction of the topography is linear over each computational cell. Furthermore, in our case the topography is linear over the entire domain, then the following Taylor expansions are exact:
\begin{equation}
\begin{aligned}
\bm{B}_{j1}=&\bm{B}_j+(\partial  \bm{{B}}_j)_x(\bar{x}_{j1}-\bar{x}_{j})+(\partial  \bm{{B}}_j)_y(\bar{y}_{j1}-\bar{y}_{j}),\\
\bm{B}_{j2}=&\bm{B}_j+(\partial  \bm{{B}}_j)_x(\bar{x}_{j2}-\bar{x}_{j})+(\partial  \bm{{B}}_j)_y(\bar{y}_{j2}-\bar{y}_{j}),\\
\bm{B}_{j3}=&\bm{B}_j+(\partial  \bm{{B}}_j)_x(\bar{x}_{j3}-\bar{x}_{j})+(\partial  \bm{{B}}_j)_y(\bar{y}_{j3}-\bar{y}_{j}).
\end{aligned}
\label{BB}
\end{equation}
Since $(\partial  \bm{{B}}_j)_y=0$, then 
\begin{equation}
\begin{aligned}
\bm{B}_{j1}=&\bm{B}_j+(\partial  \bm{{B}}_j)_x(\bar{x}_{j1}-\bar{x}_{j}),\\
\bm{B}_{j2}=&\bm{B}_j+(\partial  \bm{{B}}_j)_x(\bar{x}_{j2}-\bar{x}_{j}),\\
\bm{B}_{j3}=&\bm{B}_j+(\partial  \bm{{B}}_j)_x(\bar{x}_{j3}-\bar{x}_{j}).
\end{aligned}
\end{equation}  
According to Eq. (\ref{Grd}), we have:
\begin{equation}
\begin{aligned}
(\mathcal{\partial \bm{W}}_j)_x=&\dfrac{(\bar{y}_{j3}-\bar{y}_{j1})(\bm{\bar{w}}_{j2}-\bm{\bar{w}}_{j1})-(\bar{y}_{j2}-\bar{y}_{j1})(\bm{\bar{w}}_{j3}-\bm{\bar{w}}_{j1}) }{(\bar{y}_{j3}-\bar{y}_{j1})(\bar{x}_{j2}-\bar{x}_{j1})-(\bar{y}_{j2}-\bar{y}_{j1})(x_{j3}-x_{j1})},\\
=&\dfrac{(\bar{y}_{j3}-\bar{y}_{j1})(\bm{{B}}_{j2}-\bm{{B}}_{j1})-(\bar{y}_{j2}-\bar{y}_{j1})(\bm{{B}}_{j3}-\bm{{B}}_{j1}) }{(\bar{y}_{j3}-\bar{y}_{j1})(\bar{x}_{j2}-\bar{x}_{j1})-(\bar{y}_{j2}-\bar{y}_{j1})(x_{j3}-x_{j1})},\\
=&\dfrac{(\bar{y}_{j3}-\bar{y}_{j1})(\bar{x}_{j2}-\bar{x}_{j1})(\partial  \bm{{B}}_j)_x-(\bar{y}_{j2}-\bar{y}_{j1})(\bar{x}_{j3}-\bar{x}_{j1})(\partial  \bm{{B}}_j)_x }{(\bar{y}_{j3}-\bar{y}_{j1})(\bar{x}_{j2}-\bar{x}_{j1})-(\bar{y}_{j2}-\bar{y}_{j1})(x_{j3}-x_{j1})},\\
=&(\partial  \bm{{B}}_j)_x=-B_0,
\end{aligned}
\label{Grdb}
\end{equation}
where $(\mathcal{\partial \bm{W}}_j)_x$ is the first component of the unlimited numerical gradient of the variable $w$.  According to Eq. (\ref{Grd1}), we obtain   $(\partial  \bm{{w}}_j)_x=(\partial  \bm{{B}}_j)_x$. By replacing the numerical gradient $ (\partial  \bm{{w}}_j)_x$ by $(\partial  \bm{{B}}_j)_x=-B_0$ and using $h_0=\left({n_f^2 q_0^2}/{B_0} \right)^{3/10}$ in Eq. (\ref{Stqx1}), we get:

\begin{equation}
\begin{aligned}
 \bar{\bm{q}}_{xj}^{n+1}=&\dfrac{q_0\left(h_0^{7/3}-\dfrac{\Delta t g n_f^2}{2}q_0\right)+\Delta t gB_0h_0^{10/3}}{h_0^{7/3}+\dfrac{\Delta t g n_f^2}{2}q_0},\\
 =&\dfrac{q_0\left(h_0^{7/3}-\dfrac{\Delta t g n_f^2}{2}q_0\right)+\Delta t gn_f^2 q_0^2}{h_0^{7/3}+\dfrac{\Delta t g n_f^2}{2}q_0},\\
 =&\dfrac{q_0\left(h_0^{7/3}+\dfrac{\Delta t g n_f^2}{2}q_0\right)}{h_0^{7/3}+\dfrac{\Delta t g n_f^2}{2}q_0},\\
 =&q_0.\\
\end{aligned}
\label{Stqx2}
\end{equation}
This confirms that the computed discharge $q_x$ remains constant over the domain at time $t^{n+1}$.  \\
\emph{-Step 3: Zero-water discharge $q_y$}. Here, we will follow the same techniques used for the water discharge $q_x$ to prove that $\bar{\bm{q}}_{yj}^{n+1}=0$. Due to (\ref{Cnd}), the numerical scheme (\ref{Steady}) for the discharge variable $q_y$ reduces to:
\begin{equation}
\begin{aligned}
\bar{\bm{q}}_{yj}^{n+1} =&-\dfrac{\Delta t}{|\bm{T}_j|}\sum_{k=1}^3\dfrac{d_{jk}\sin(\theta_{jk})}{b_{jk}^{in}+b_{jk}^{out}}\Big[\frac{g}{2}h_0^2 (b_{jk}^{in} +b_{jk}^{out})  \Big]\\
&+\frac{\Delta t g}{2|\bm{T}_j|}\sum_{k=1}^{3}d_{jk}\sin(\theta_{jk})h_0^2-g\Delta t h_0(\partial  \bm{{w}}_j)_y.
\end{aligned}
\label{Steay}
\end{equation}
Using $\sum_{k=1}^3 d_{jk}\sin(\theta_{jk})=0$ at each cell, Eq. (\ref{Steay}) is simply: 
\begin{equation}
\begin{aligned}
\bar{\bm{q}}_{yj}^{n+1} =-g\Delta t h_0(\partial  \bm{{w}}_j)_y.
\end{aligned}
\label{Steay2}
\end{equation}
By incorporating Eq. (\ref{BB}) in the second equation of (\ref{Grd}) and using some straightforward calculations, we obtain $(\bm{{w}}_j)_y=(\bm{{B}}_j)_y=0$. Then, Eq. (\ref{Steay2}) becomes $\bar{\bm{q}}_{yj}^{n+1} =0$, that is the water discharge in y-direction remains zero over the domain at time $t^ {n+1} $. We conclude that, the numerical scheme (\ref{Scheme})-(\ref{Fri}) preserves the nontrivial steady-state  solutions.\\
The same procedure can be applied for the nontrivial steady-state solutions in the $y-$direction.
\end{proof}
\section{Numerical experiments}\label{S4}
In this section, we perform numerical experiments to investigate the performance of the proposed numerical model for simulating waves propagation and waves run-up on coastal areas. The results of the proposed numerical model are compared with available experimental data from laboratory experiments. In all numerical examples, we used $g=9.81 m/s^2$ for gravity acceleration. We start our numerical experiments by demonstrating the nontrivial well-balanced property of the numerical model in Example $\ref{Ex0}$. In Example $\ref{Ex1}$, we perform numerical simulations of dam break flow over a sloping bed. In Example $\ref{Ex2}$, we simulate the propagation and run-up of breaking and non-breaking solitary waves on a sloping beach. In Example $\ref{Ex3}$, we test the ability of the numerical model for simulating periodic waves over a sloping beach. In Example $\ref{Ex4}$, we study the evolution of solitary wave over a conical island. Finally in Example $\ref{Ex5}$, the proposed numerical model is applied to predict waves propagation along a  sloping beach with complex bottom topography.
\subsection{Steady flow over a sloping bed} \label{Ex0}
In this first numerical example, we validate the model's ability to preserve nontrivial steady-state solutions over a sloping bed. We consider a computational domain $[0, 2.5 ]\times[0, 0.2]$ which is discretized using  $12863$ triangular cells. The bottom topography is as follows:
\begin{equation}
B(x,y)=-0.015(x-2.53).
\end{equation}
The water depth is initially constant everywhere  over the domain $h(x,y,0)=h_0=\left(\dfrac{n_f^2q_0^2 }{0.015}\right)^{3/10}$ with constant flow discharges $q_x(x,y,0)=q_0$ and $q_y(x,y,0)=0$. In our experiments, we study the case for supercritical and subcritical flows. In the supercritical flow case, we use the initial discharge $q_0=0.02\,m^{2}/s$ and Manning's coefficient $n = 0.01\, m^{-1/3}s$ with Froude number $Fr=2.058$. While in the subcritical flow case, the initial discharge $q_0=0.1\, m^{2}/s$ and Manning’s coefficient $n = 0.05 \, m^{-1/3}s$  are used and  Froude number is $Fr=0.568$. We set the following boundary conditions; $u_{jk}=u_{j}$,  $v_{jk}=v_{j}$ and $h_{jk}=h_{j}$ in $x-$direction and $u_{jk}=u_{j}$, $v_{jk}=0$ and $h_{jk}=h_{j}$ in $y-$direction.\\
%Wall boundary conditions are setted at the lateral parts of the domain, while inflow and outflow conditions are  used at the left and right boundaries.\\
To assess the accuracy of the numerical model for preserving nontrivial steady-state solutions, we compute  errors for $L_1$, $L_2$ and $L_{\infty}$ norms for water depth $h$ and water discharges $q_x$ and $q_y$ variables at time $t=150\, s$ for two case studies shown in Table \ref{Tbl}. In Figure \ref{FF2}, we show the computed free-surface elevation and water discharges compared with exact solution at time $t=150\,s$.  We obtain accurate results and the computed solution remains steady for a large simulation time.

%As one can see, at a large time the solution remains constant and the steady-state  over a slanted surface is well preserved confirmed by the accuracy of the obtained errors in Table \ref{Tbl}.
\begin{center}
 %\begin{table}
\captionof{table}{Computed errors $L_1$, $L_2$ and $L_{\infty}$ for the water depth $h$ and water discharges $q_x$ and $q_y$.}
\begin{tabular}{ c c c c c}
\hline 
Flow type& Variable  & $L_1$-error & $L_2$-error & $L_{\infty}$-error \\ 
\hline 
 & $h$ & $7.337\times 10^{-14}$ & $1.94\times 10^{-12}$ & $3.1\times 10^{-15}$ \\ 

supercritical  & $q_x$ &  $1.32\times 10^{-14}$ &  $3.20\times 10^{-13}$ &  $5.72\times 10^{-16}$\\ 

 & $q_y$ &  $3.85\times 10^{-18}$ &  $4.96\times 10^{-18}$ &  $1.94\times 10^{-17}$\\ 
\hline 
 & $h$ &  $2.64\times 10^{-15}$ &  $1.93\times 10^{-13}$ &  $1.61\times 10^{-15}$ \\ 

subcritical & $q_x$ &  $7.51\times 10^{-14}$ &  $3.58\times 10^{-12}$ &  $1.74\times 10^{-14}$ \\ 

 & $q_y$ &  $1.17\times 10^{-17}$ &  $9.41\times 10^{-17}$ &  $3.22\times 10^{-16}$ \\ 
\hline 
\end{tabular}
\label{Tbl}
%\end{table}

 \end{center} 

\begin{figure}[!ht]
\begin{center}
(a)\\ 
\includegraphics[scale=0.28]{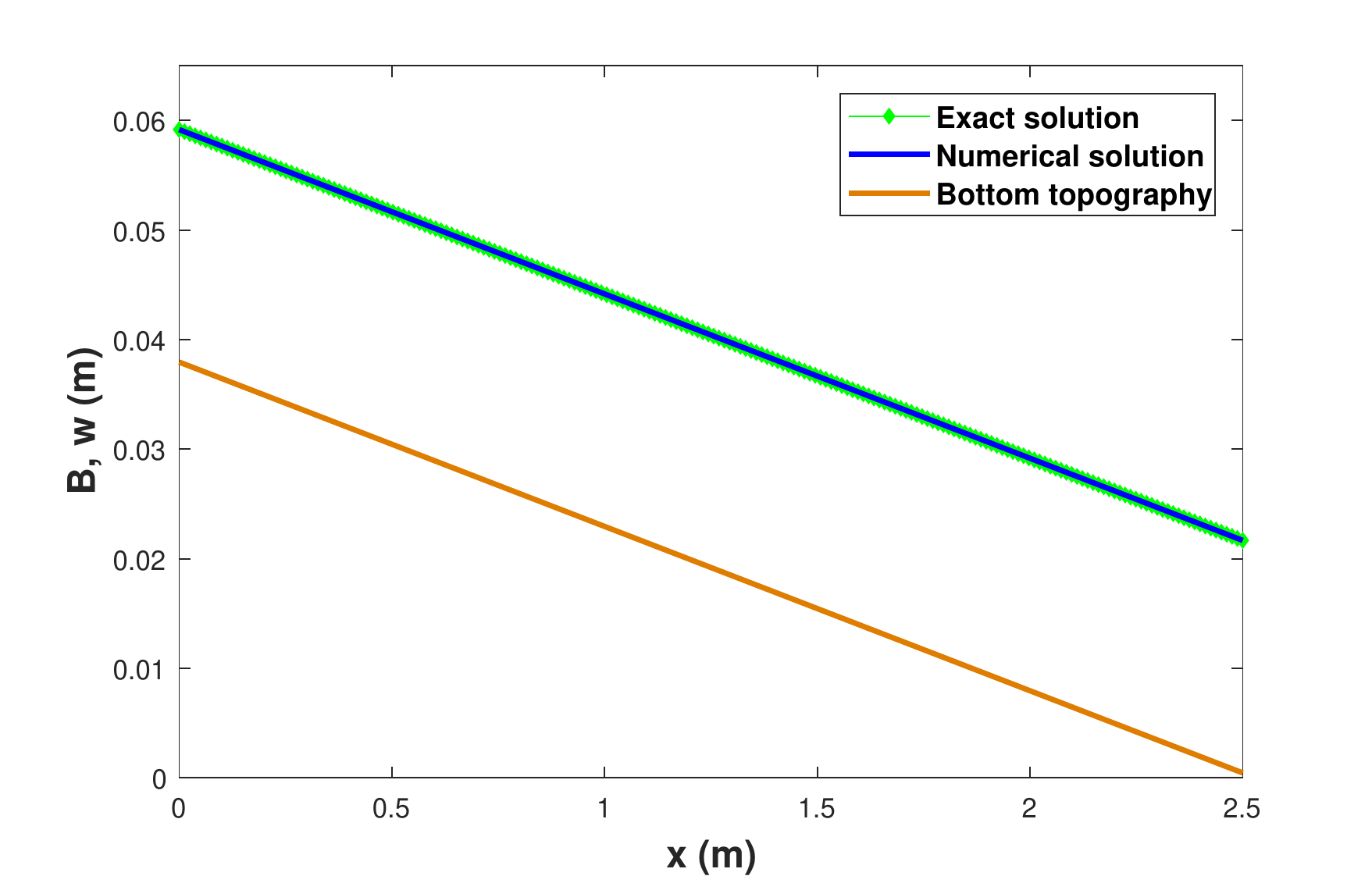}\quad \includegraphics[scale=0.28]{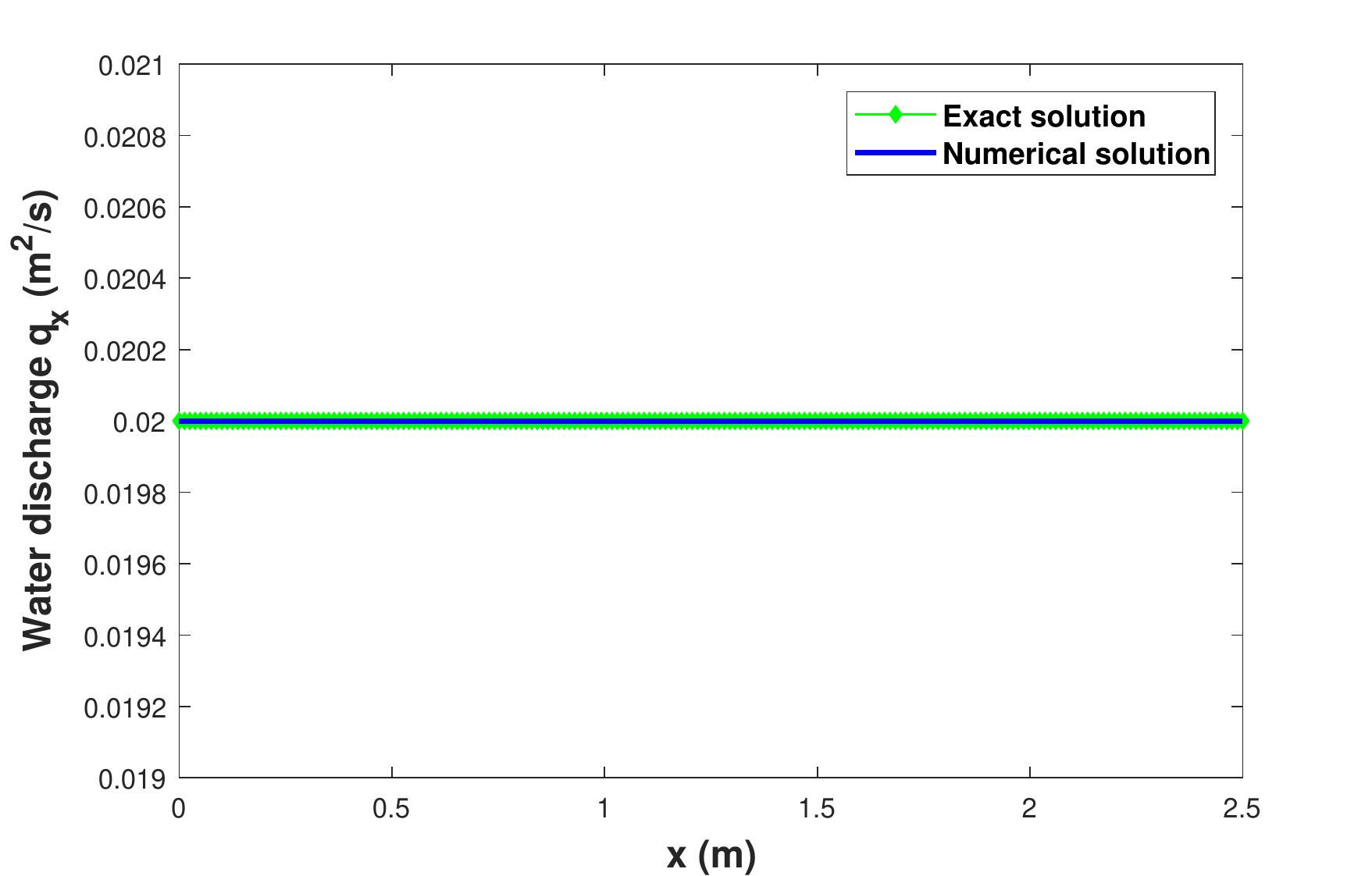}\quad \includegraphics[scale=0.28]{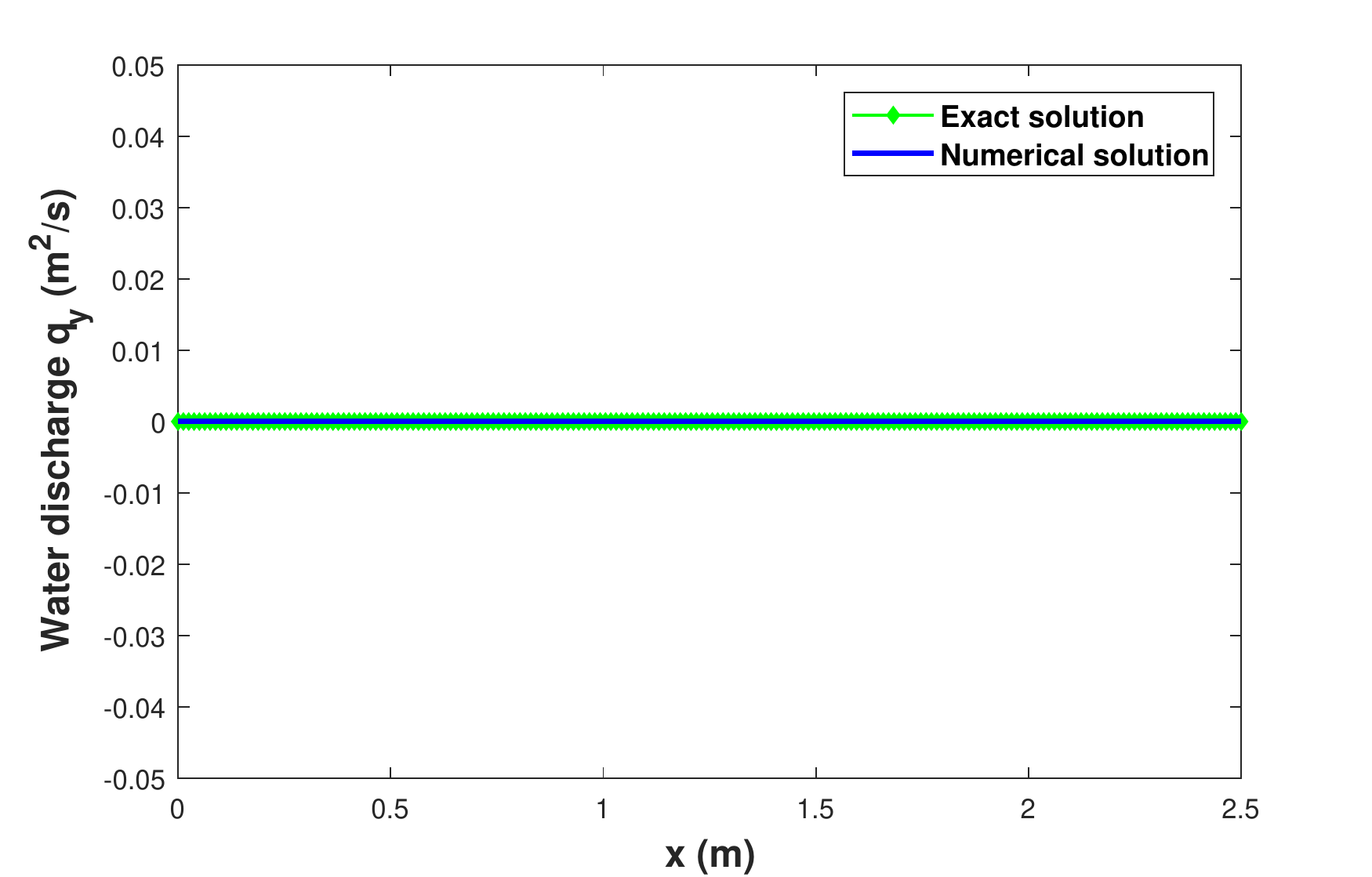}

(b)\\ 
\includegraphics[scale=0.28]{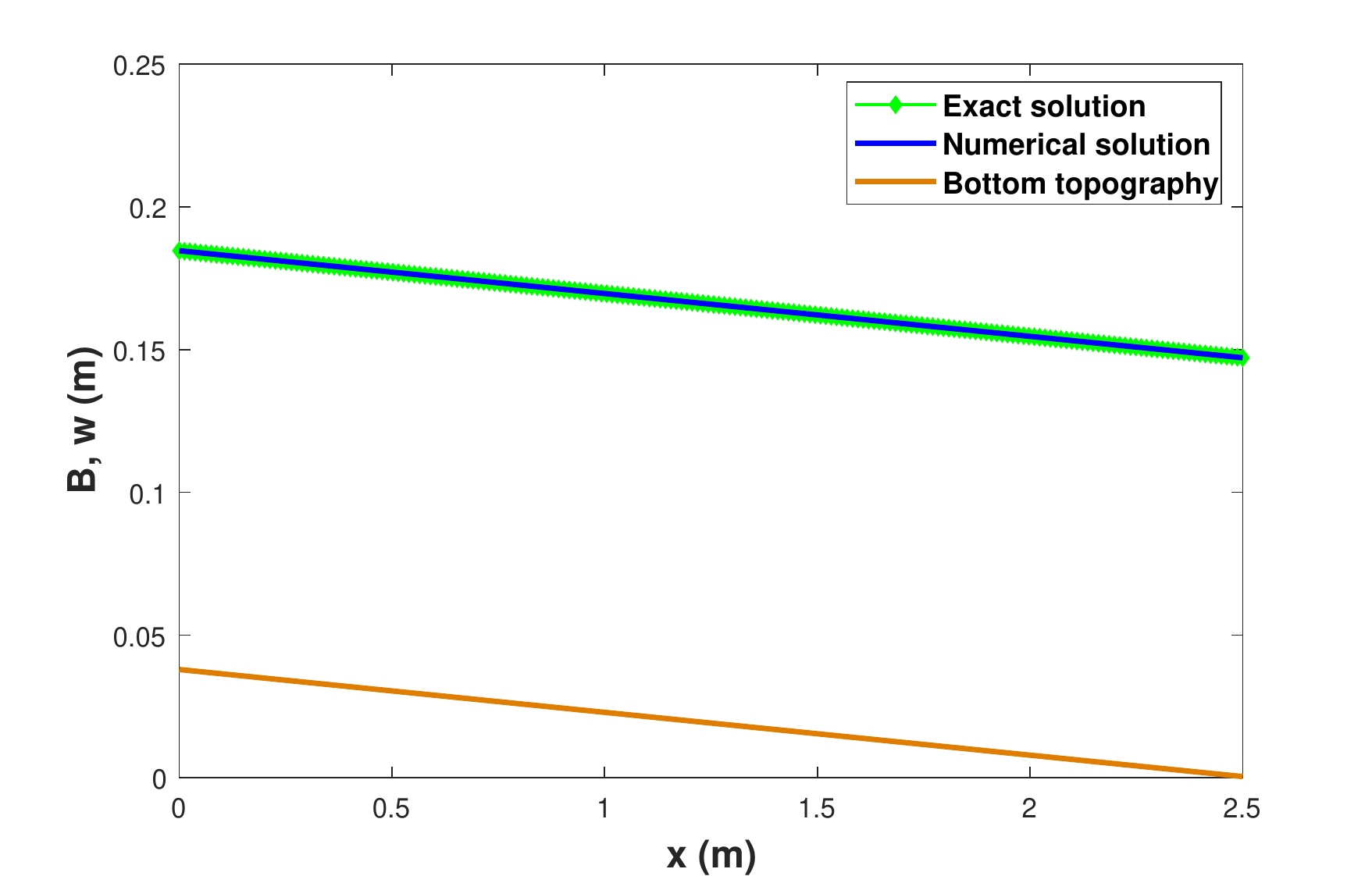}\quad \includegraphics[scale=0.28]{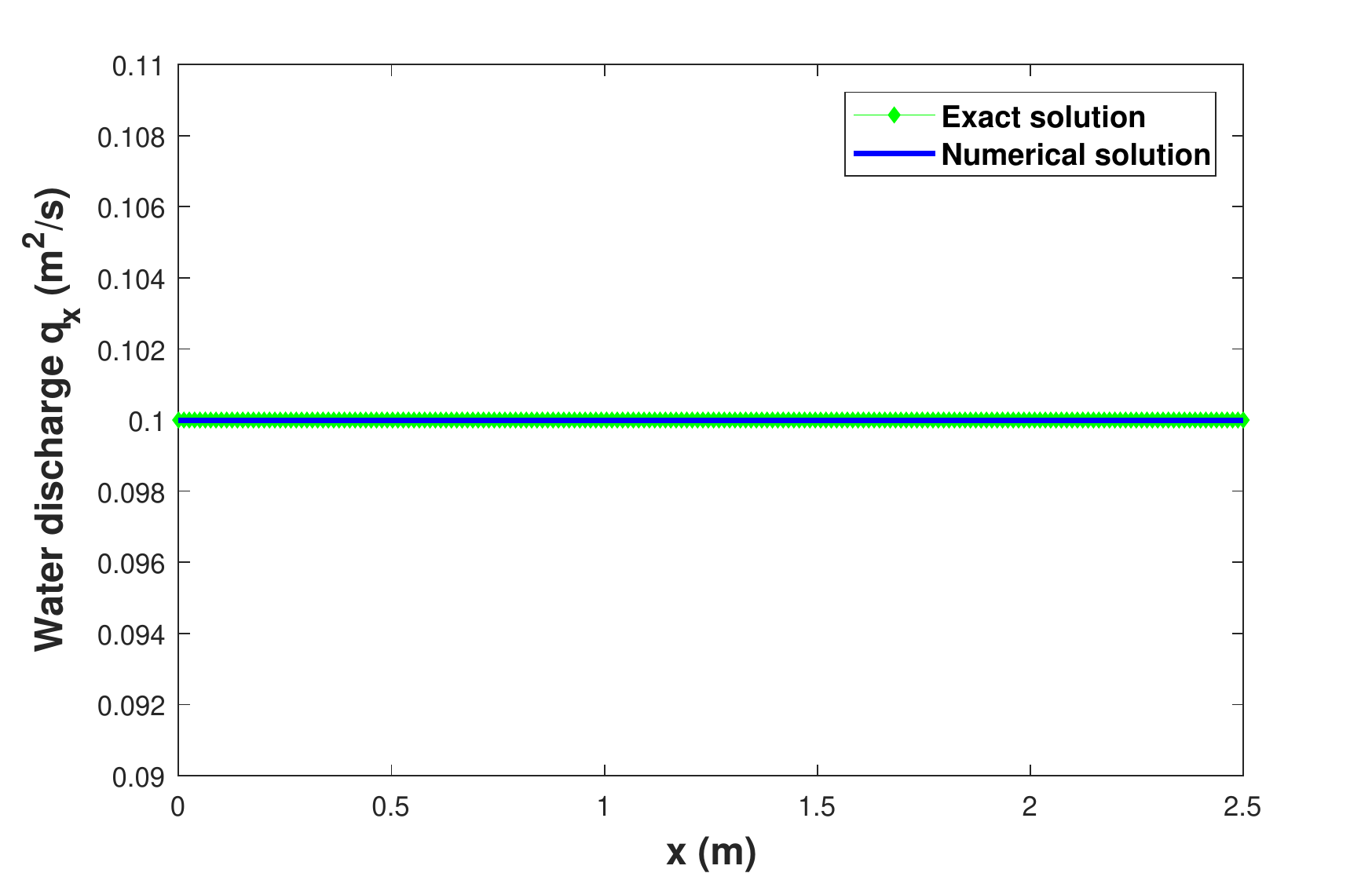}\quad \includegraphics[scale=0.28]{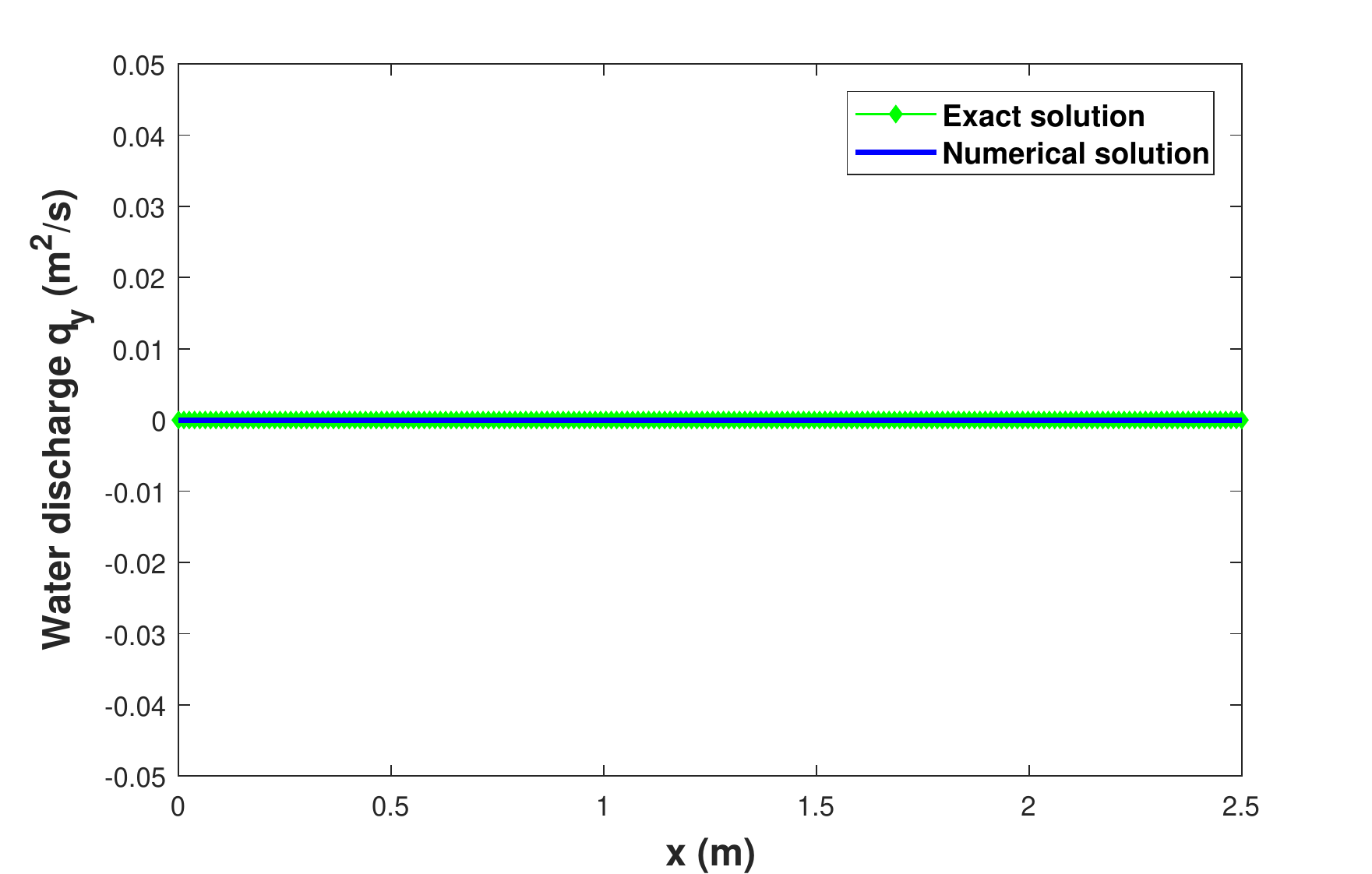} \quad
\caption{Computed water free-surface elevation and water discharge over a slanted surface at time $t=150\, s$ in the supercritical case (a) and  in the subcritical case (b).  }
\label{FF2}
\end{center}
\end{figure}
\subsection{Dam break flow on a sloping bed}\label{Ex1}% in a tilting laboratory flume
 %we consider the following laboratory  dam break flow over a sloping bed to examine the model's accuracy for predicting incident and reflected waves as well as run-up and run-down motions.
In this second numerical example, we consider a laboratory experiment of dam break flow on a sloping bed, carried out by Aureli et al.\cite{aureli2000numerical} at Department of Civil Engineering, Parma University. The experiment was conducted on a rectangular flume of $7\,m $ long, $1\, m$ wide, and $0.5\, m$ high with $1:10$ sloping bed located at $x=3.4\, m$ downstream of the dam as shown in Figure \ref{Fi31}. The dam gate was controlled by an oil pressurized circuit located at $x=2.25\, m$ \cite{aureli2000numerical}. Initially, upstream of the dam, the reservoir has a water depth of $0.25\, m$ while the water depth is zero at the downstream of the dam. In the experiment, complex hydrodynamic behaviors are observed such as run-up and run-down between wet and dry areas. The measurements of the water depth have been performed at four locations $G_1$ $(1.4, 0.5)$, $G_2$ $(2.25, 0.5)$, $G_3$ $(3.4, 0.5)$, and $G_4$ $(4.5, 0.5)$ along the flume. In the numerical simulations, the computational domain $[0, 7]\times [0, 1]$ is discretized using $18082$ triangular cells. The Manning friction coefficient $ n_f=0.01\, m^{-1/3}s$ is used, and wall boundary conditions are considered at all sides of the domain except at the outlet side where outflow condition is imposed. 

\begin{figure}[!ht]
\begin{center}
\includegraphics[scale=0.55]{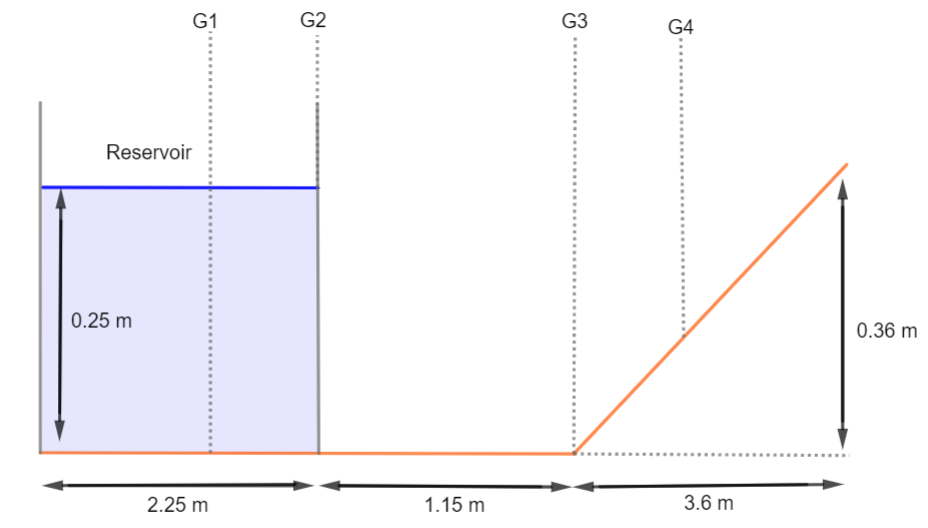}
\caption{Schematic of the dam-break flow experiment Aureli et al.\cite{aureli2000numerical}. }
\label{Fi31}
\end{center}
\end{figure}
The evolution of the computed water depth using the proposed numerical model and the experimental measurements at the four gauges are shown in Figure \ref{Figg31}. Initially, once the dam gate is removed the water flooded from the reservoir to downstream generating run-up and run-down motions at the sloping bed. The dam-break wave propagates forward and backward over the domain and wave run-up and run-down motions occur several times. Due to the presence of bed friction effects, the flow velocity decreases and the surface water stabilizes for large time. The simulation results show that the time evolution of water depth at the gauges in different locations in space is well predicted compared to experimental measurements which confirms that the proposed numerical model is accurate for modeling waves run-up and run-down between wet and dry zones.
\begin{figure}[!ht]
\begin{center}
\includegraphics[scale=0.38]{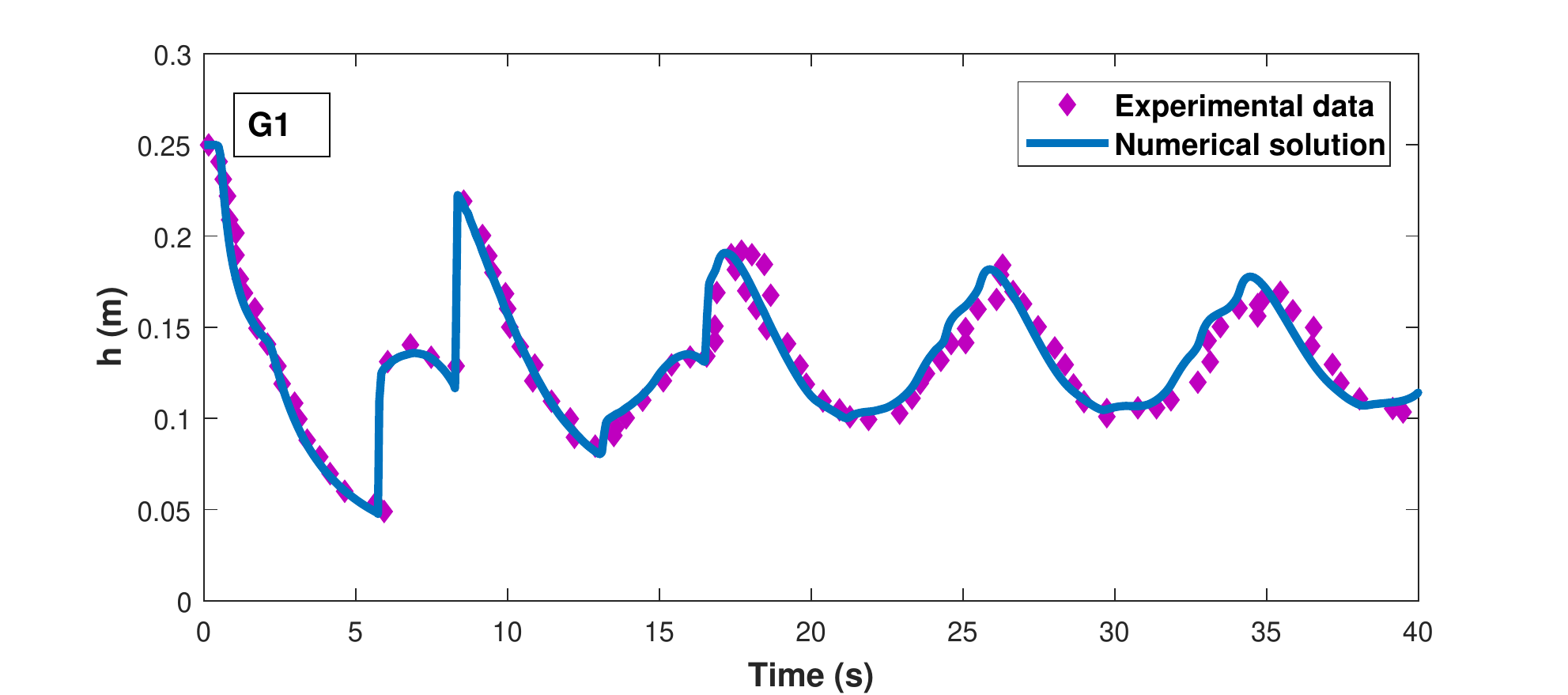}\includegraphics[scale=0.38]{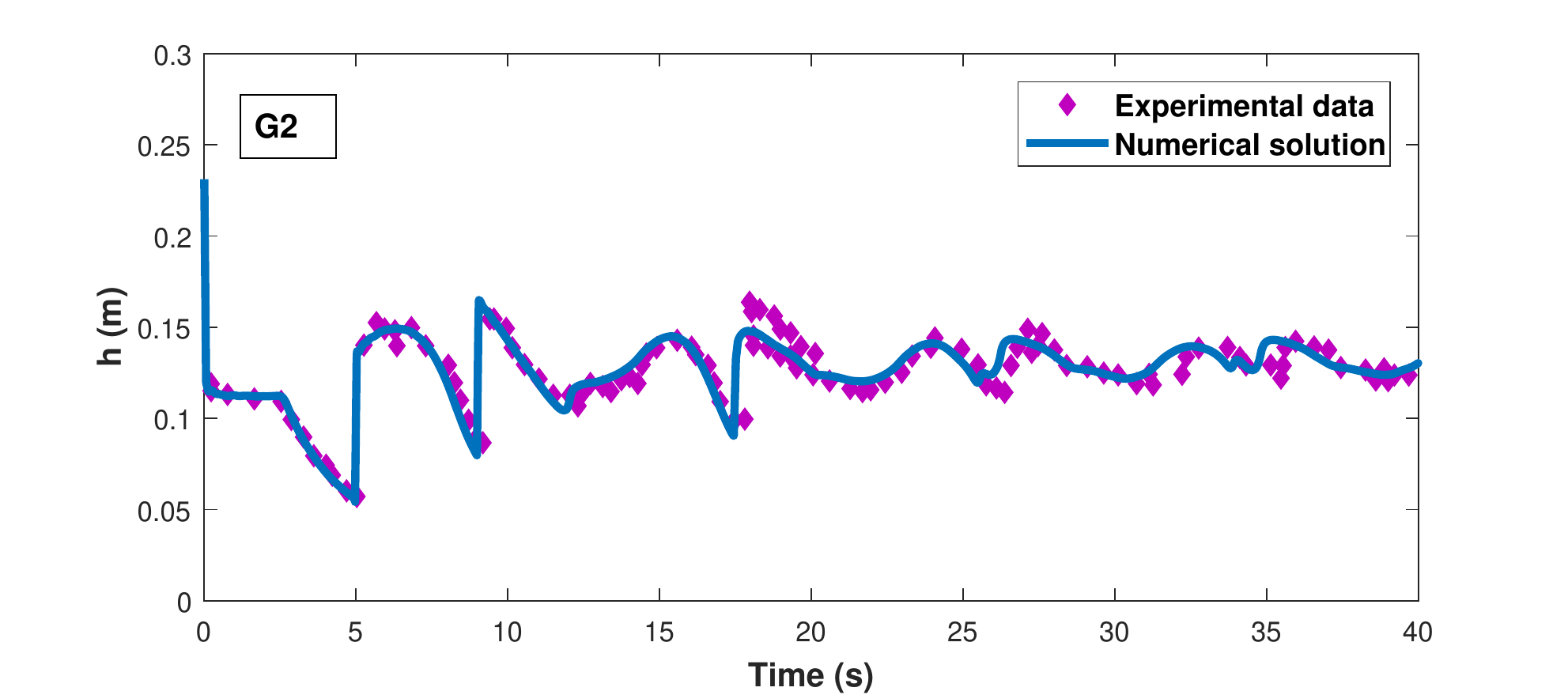}\quad 
\includegraphics[scale=0.38]{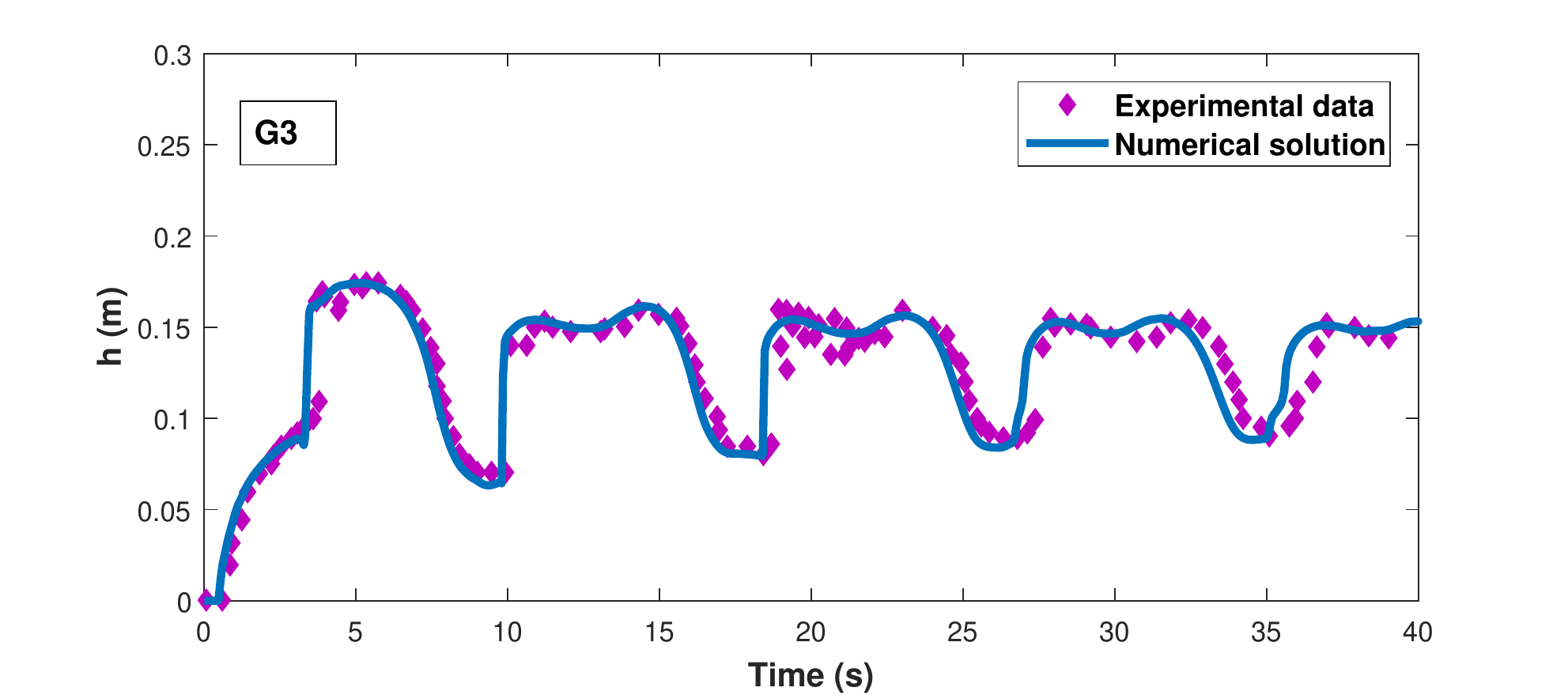}\includegraphics[scale=0.38]{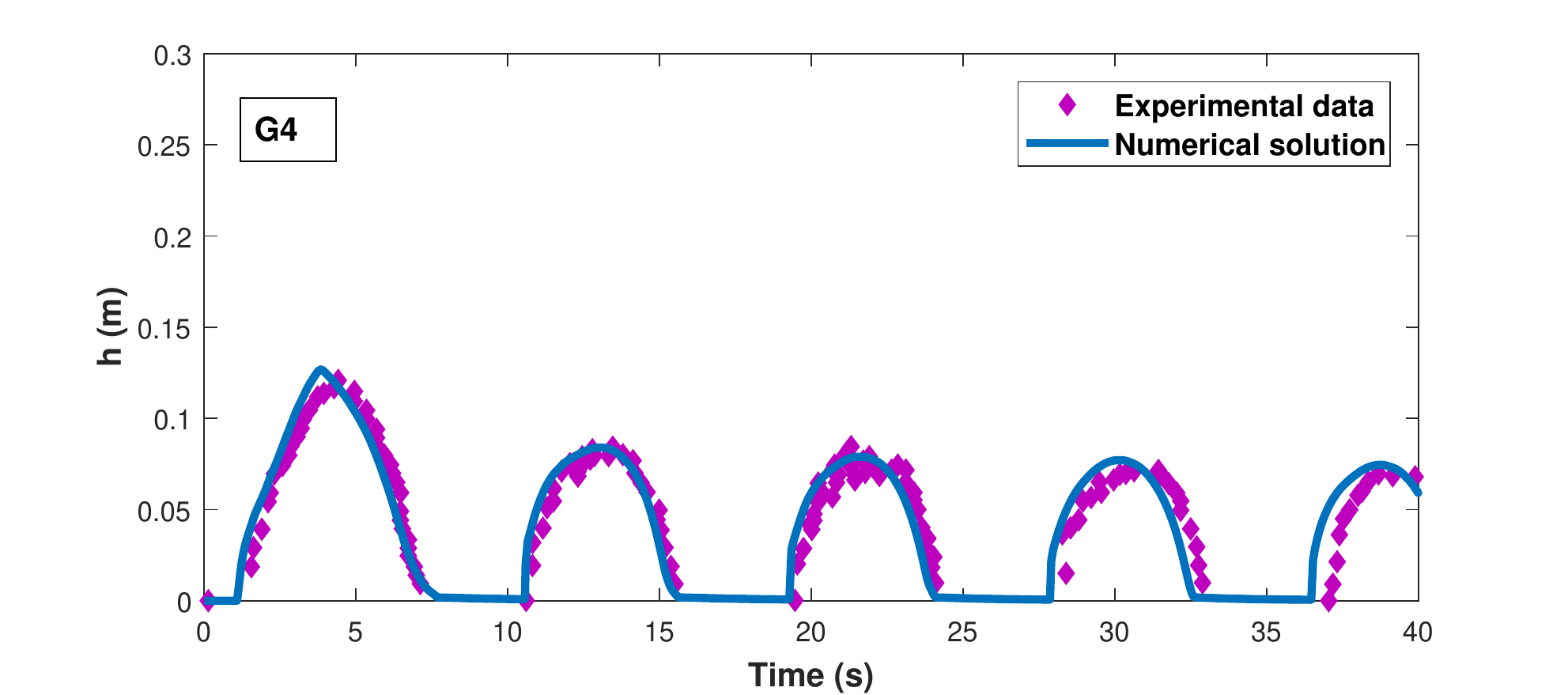}\quad 

\caption{Computed water depth compared with experimental measurements at the four gauges.}
\label{Figg31}
\end{center}
\end{figure}
\subsection{Solitary wave run-up on a sloping beach}\label{Ex2}
%In this second numerical example, laboratory experimental data of Synolakis \cite{synolakis1986runup,synolakis1987runup} are used to validate the proposed numerical model for simulating the propagation and run-up of breaking and non-breaking solitary waves on a sloping beach. 
In this numerical example, we test the ability of the numerical model for predicting the propagation and run-up of breaking and non-breaking solitary waves on a sloping beach. The numerical results are validated using laboratory experimental data provided by Synolakis \cite{synolakis1986runup,synolakis1987runup}. The experiment consists of $1:19.85$ plane sloping beach of angle $\varphi$ with constant bed connected to a constant depth region. The initial water surface profile is a solitary wave of height $H$ centered at $x_0$ which propagates over a still water of constant depth $h_0$ as shown in Figure \ref{Fig2}. In our simulations, we consider the computational domain $[-20, 60]\times [0, 1]$ which is discretized using $41377$ triangular cells. The initial condition for free-surface elevation and velocity is \cite{que2006numerical,mahdavi2009modeling}:

\begin{equation}
w(x,y,0)=H\sech^2\left[\sqrt{\dfrac{3 H}{4h_0^3}}(x-x_0) \right],\,\, \bm{u}(x,y,0)=\left[\dfrac{c w(x,y,0)}{w(x,y,0)+h_0},0\right]^T, 
\end{equation}
where $c=\sqrt{g(h_0+H)}$ being the wave velocity. The still water depth $h_0=1\, m$ and the wave height $H=0.3\,m$ for breaking  and  $H= 0.0185\,m$ for non-breaking waves are used. The Manning friction coefficient is set to $n_f=0.01\, m^{-1/3} s$ and  wall boundary conditions are used at the lateral parts of the computational domain while inflow and outflow conditions are imposed at the right and left boundaries, respectively.
\begin{figure}[!ht]
\begin{center}
\includegraphics[scale=0.55]{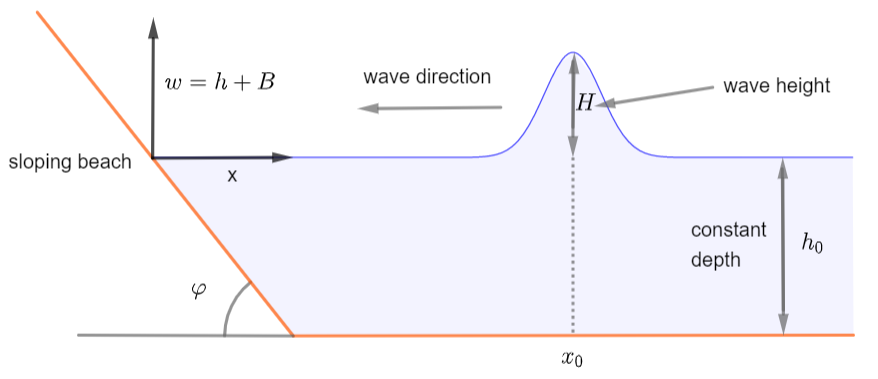}
\caption{1D representation of the solitary wave over a sloping beach.}
\label{Fig2}
\end{center}
\end{figure}

Figure \ref{Fig3} shows the evolution of breaking solitary wave over the sloping beach simulated at different times $t=0,\,1.6,\,4.79,\,11.18$, and $17.59\,s$. The cross section along the x-axis of the computed free-surface elevation is compared with experimental measurements at each time as shown in Figure \ref{Fig3} (left). In Figure \ref{Fig3} (right), we show the three-dimensional view of the computed solution. The wave is initially centered at $x_0=14\, m$ and propagates first over the region of constant depth towards the beach. Once the wave arrives the shoreline,  it  climbs up the sloping bed to reach its maximum run-up at about time $t=11.18\, s$. The results show that breaking occurs as the wave advances over the region of constant depth, when vertical front of the wave is reached as shown in Figure \ref{Fig3}.\\
% and then moves back to reach its minimum run-down at about $t=17.57\, s$.
%The results show that breaking occurs twice at wave run-up and run-down stages. The fist breaking appears as the  wave advances in the sloping beach, which is indicated by a vertical front at the time between $t=1.6\, s$ and $t=4.79\, s$.
%Whereas, the second breaking  occurs when the receding wave reaches the near initial position of the shoreline at the time between $t=11.18\, s$ and $t=17.57\, s$. 
For the non-breaking case, the wave is initially centered at $x_0=38.5\, m$.
Figure \ref{Fig4} shows the computed free surface elevation of non-breaking solitary wave at times $t=8,\,11.17,\,14.37,\,17.56$, and $20.75\,s$, and the results are compared with the experimental measurements.  As in the breaking case, the incident wave advances first over the region of constant depth and climbs up the sloping bed when it  arrives the shoreline.  The wave reaches its maximum  run-up at about time $t =17.56\, s$. We observe a good agreement between the numerical simulations and experimental measurements which demonstrates  the accuracy of the proposed numerical model for predicting the evolution of breaking and non-braking solitary waves on a sloping beach.
%In this non-breaking  case, the incident wave is initially centered at $x_0=38.5\, m$

\begin{figure}[!ht]
\begin{center}

\includegraphics[scale=0.4]{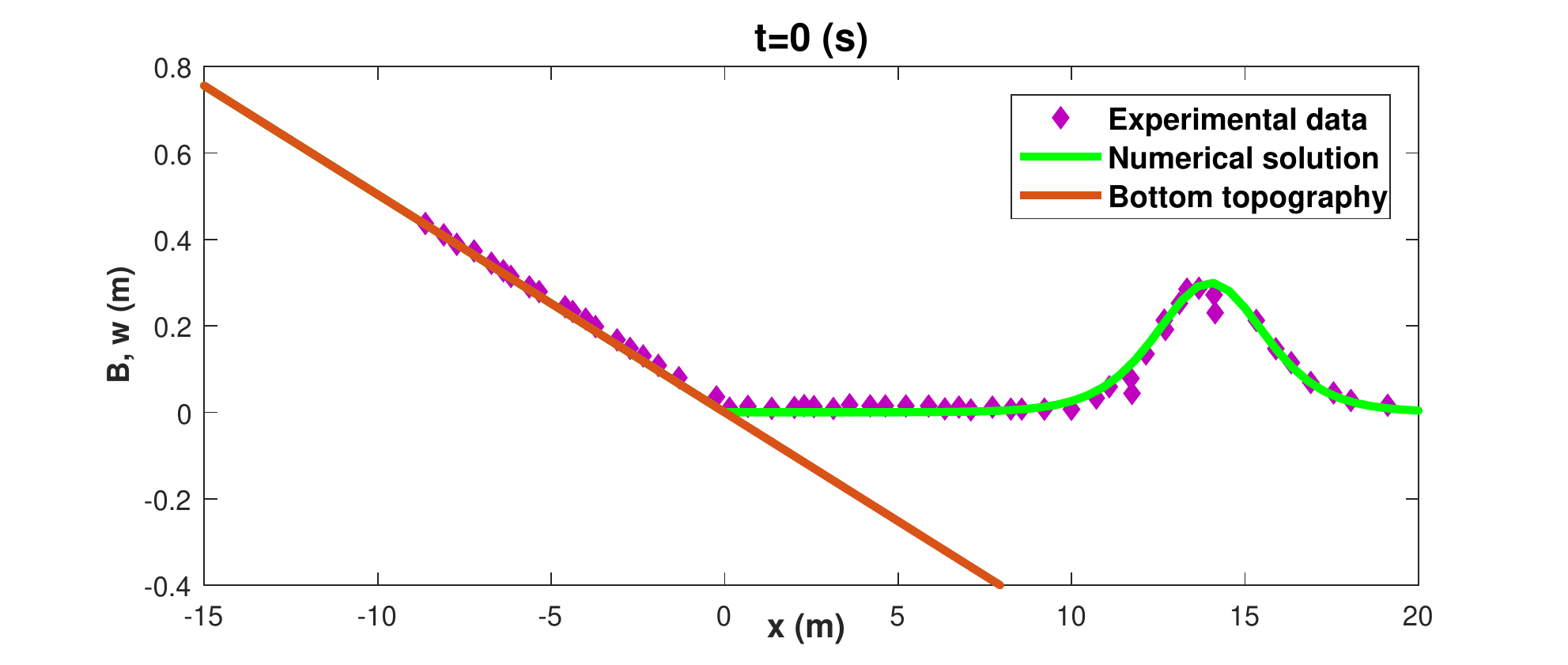}\quad \includegraphics[scale=0.23]{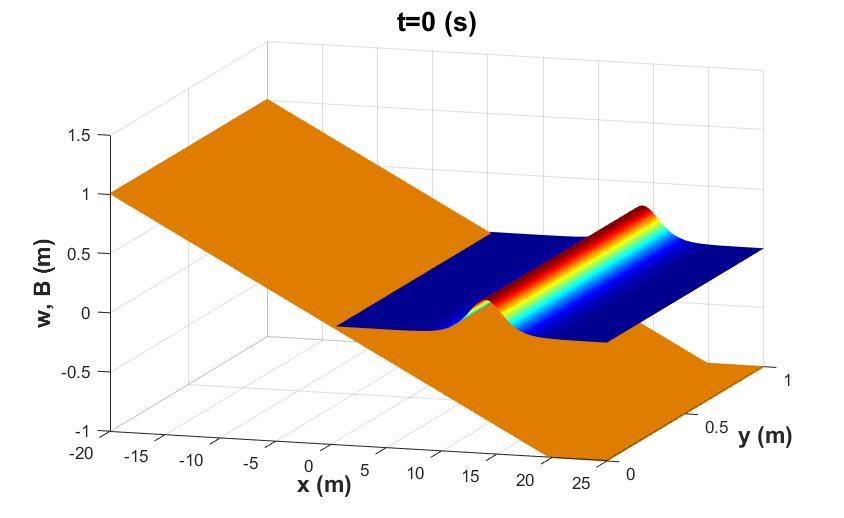}

\includegraphics[scale=0.38]{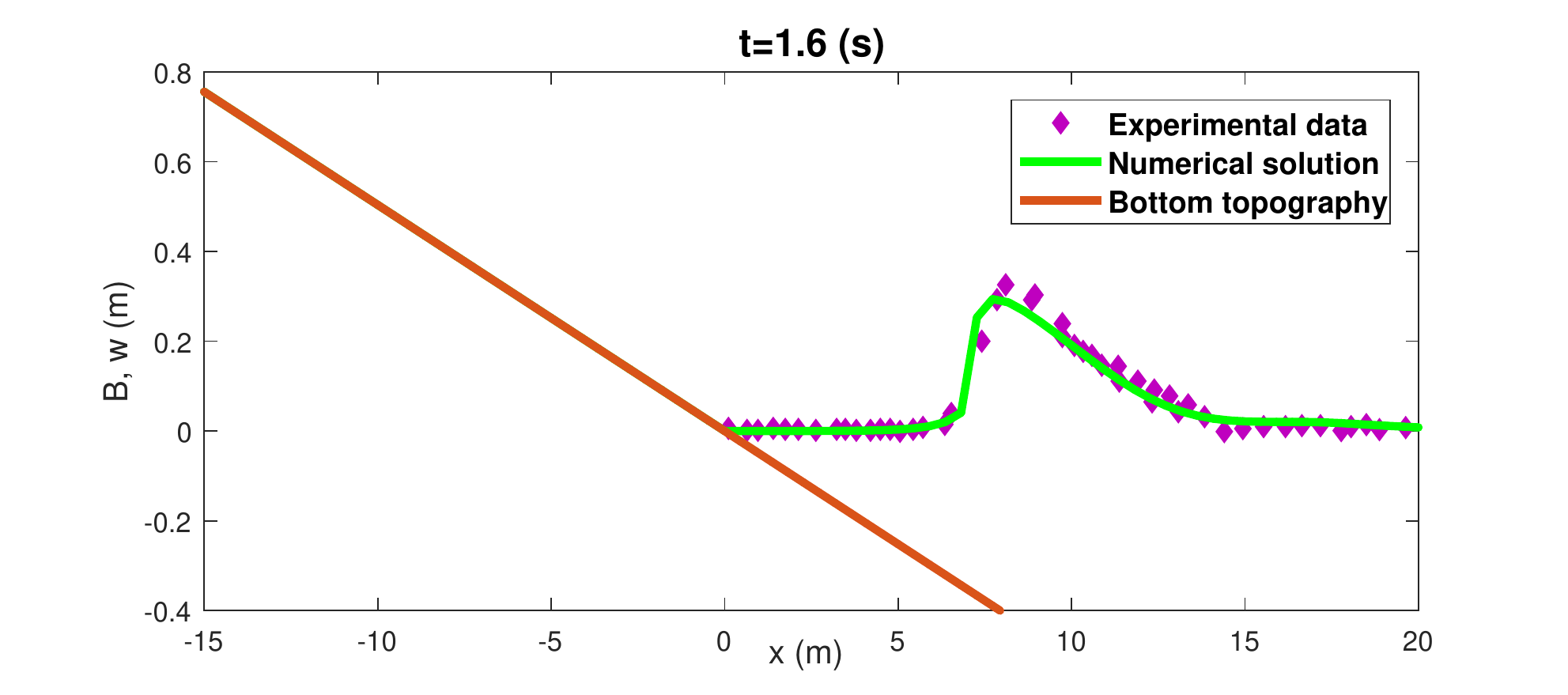}\quad \includegraphics[scale=0.22]{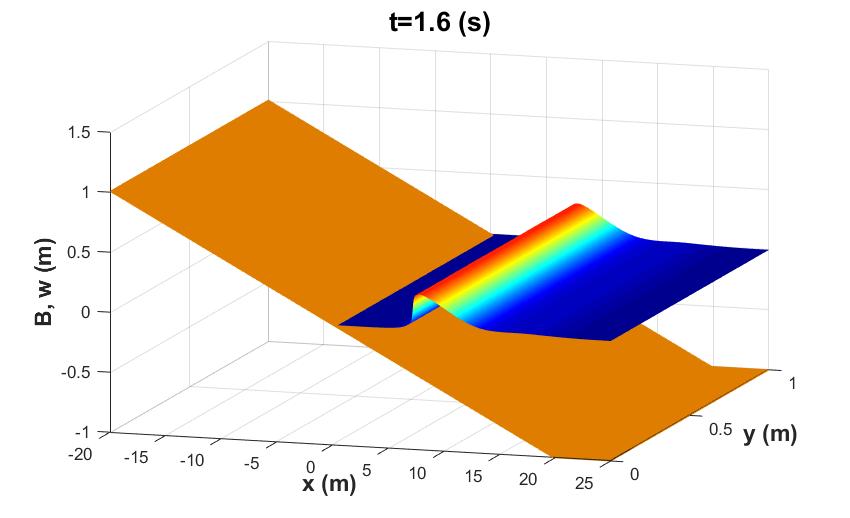}
\includegraphics[scale=0.38]{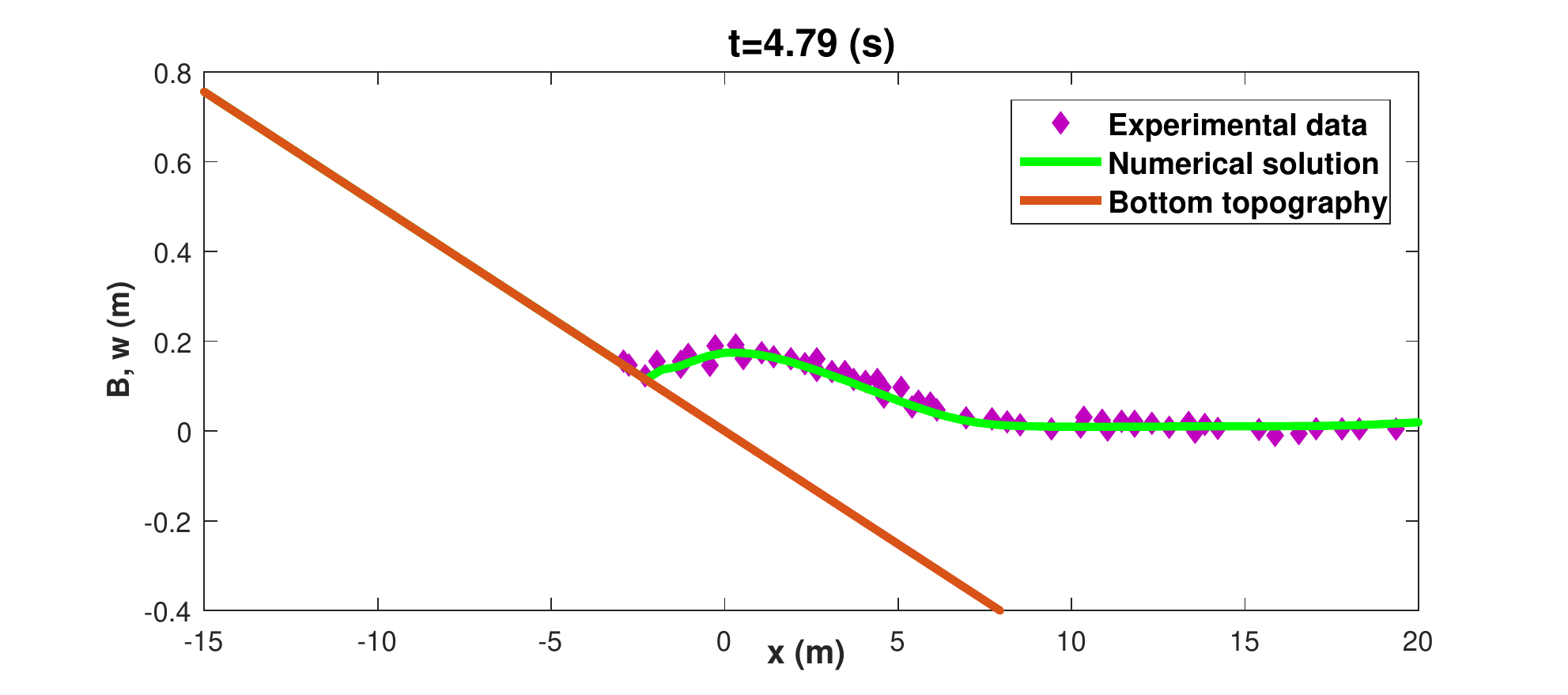}\quad \includegraphics[scale=0.22]{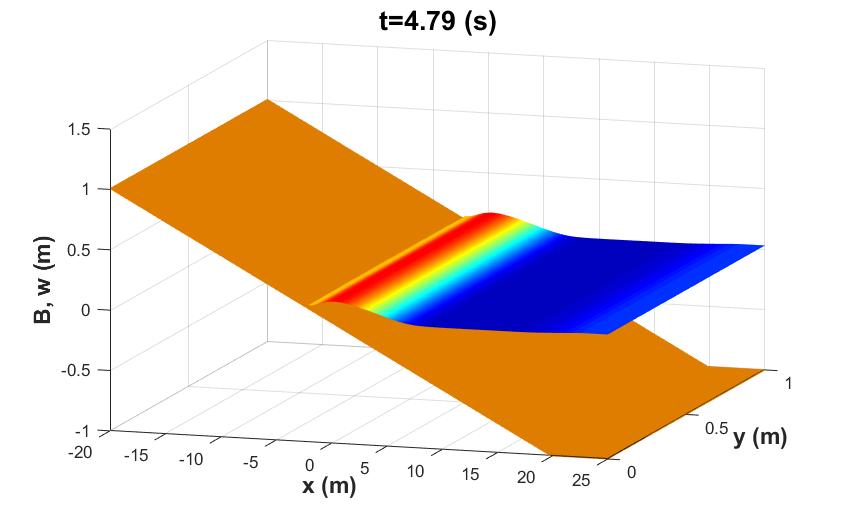}
\includegraphics[scale=0.38]{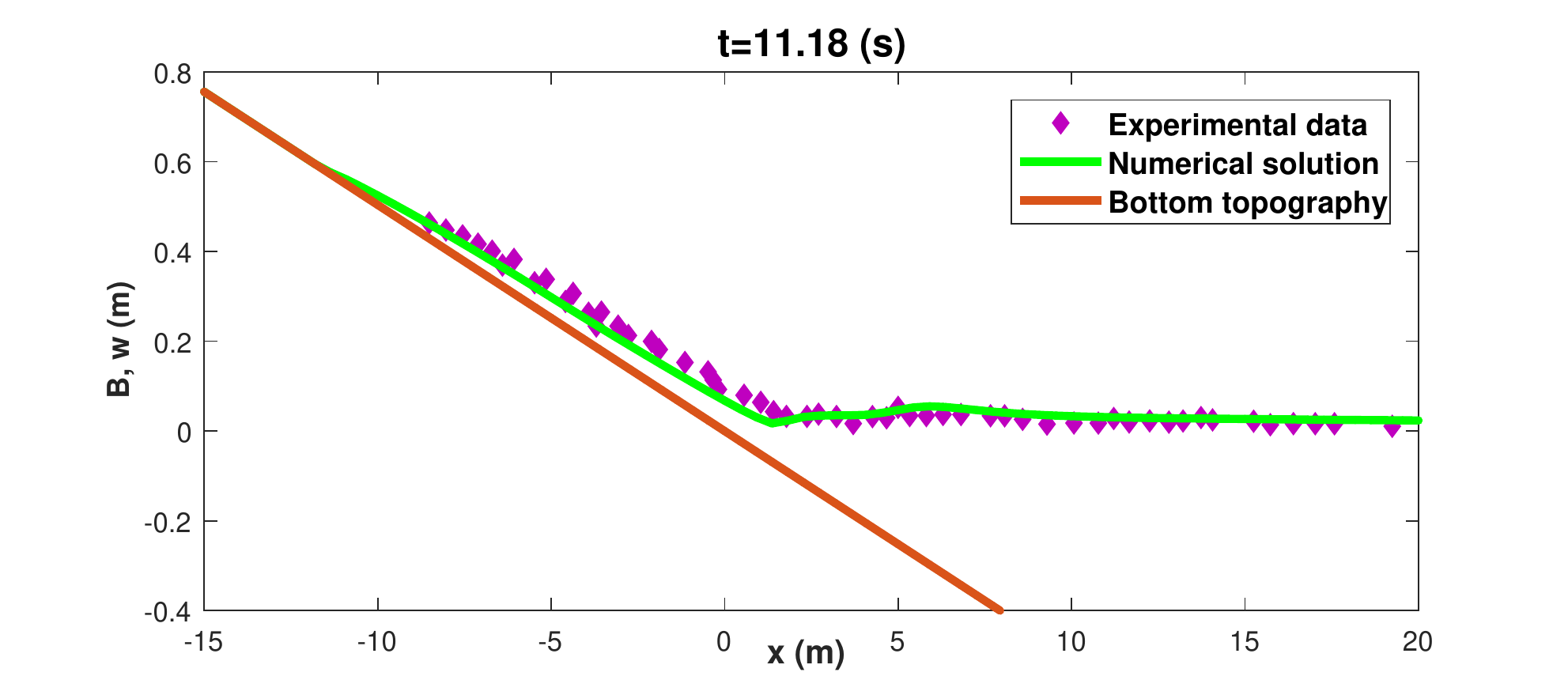}\quad \includegraphics[scale=0.22]{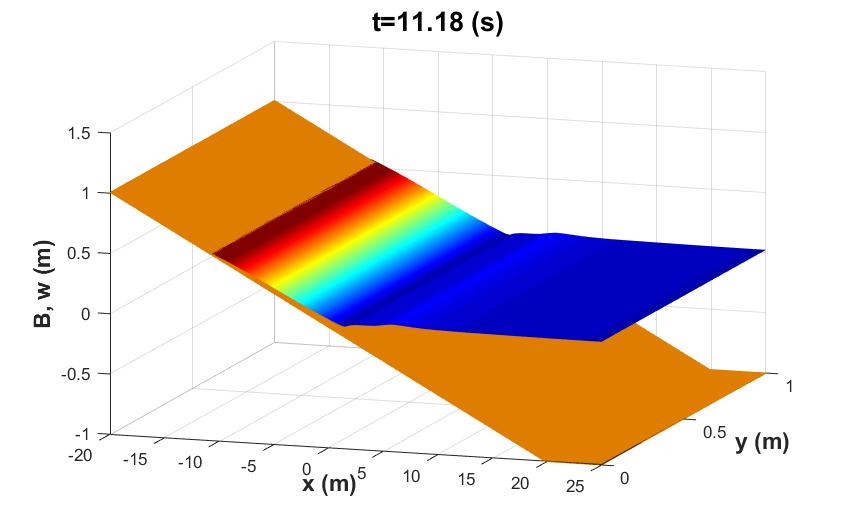}
\includegraphics[scale=0.38]{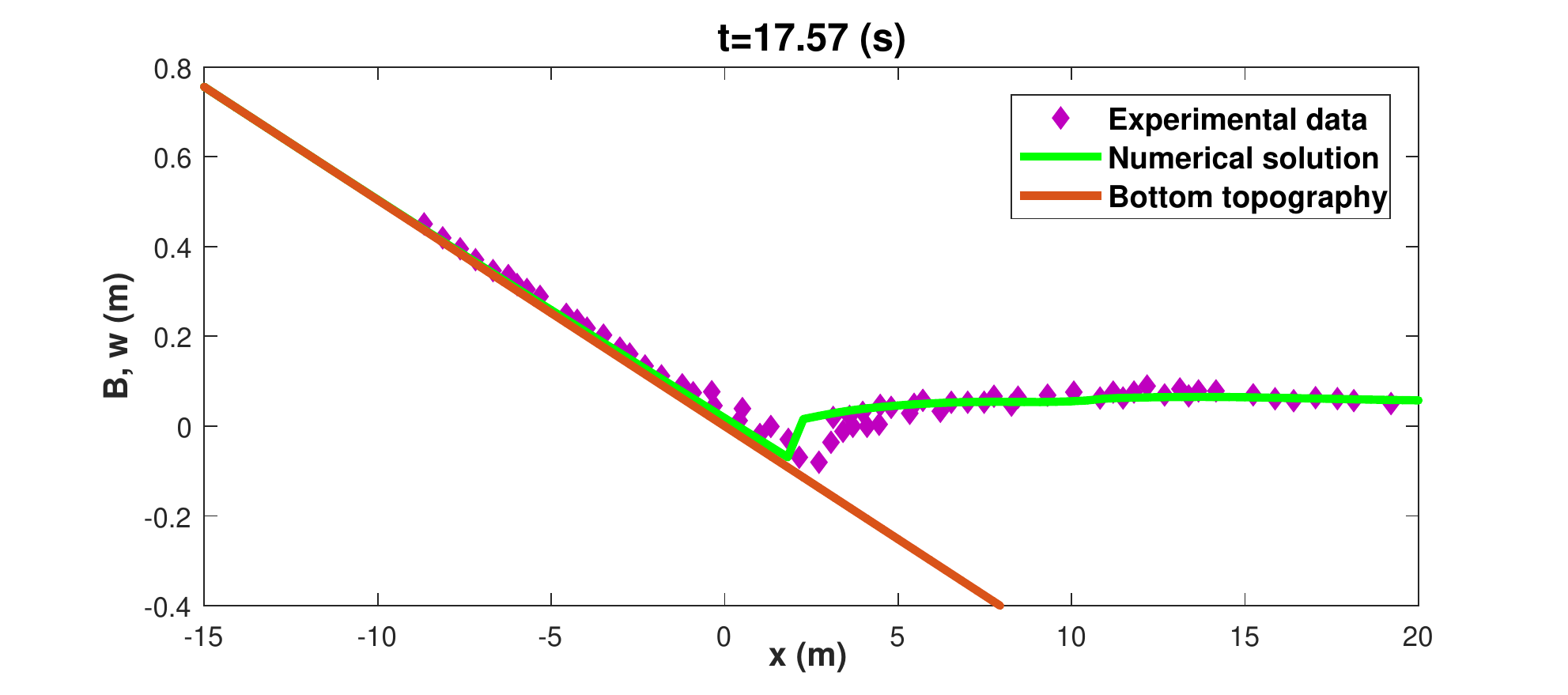}\quad \includegraphics[scale=0.22]{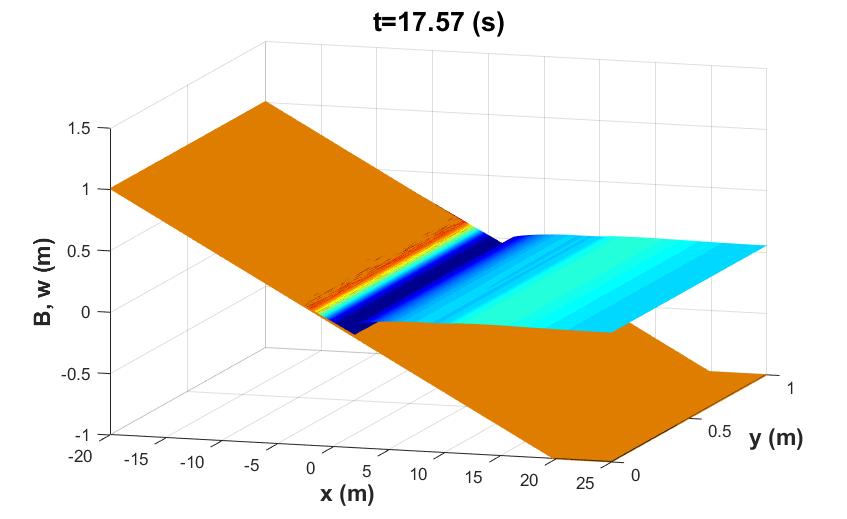}
\caption{Modeling of breaking solitary wave on a sloping beach:  Predicted free-surface elevation compared with experimental measurements at different times (left) and the corresponding three-dimensional view (right).}
\label{Fig3}
\end{center}
\end{figure}
\begin{figure}[!ht]
\begin{center}

\includegraphics[scale=0.38]{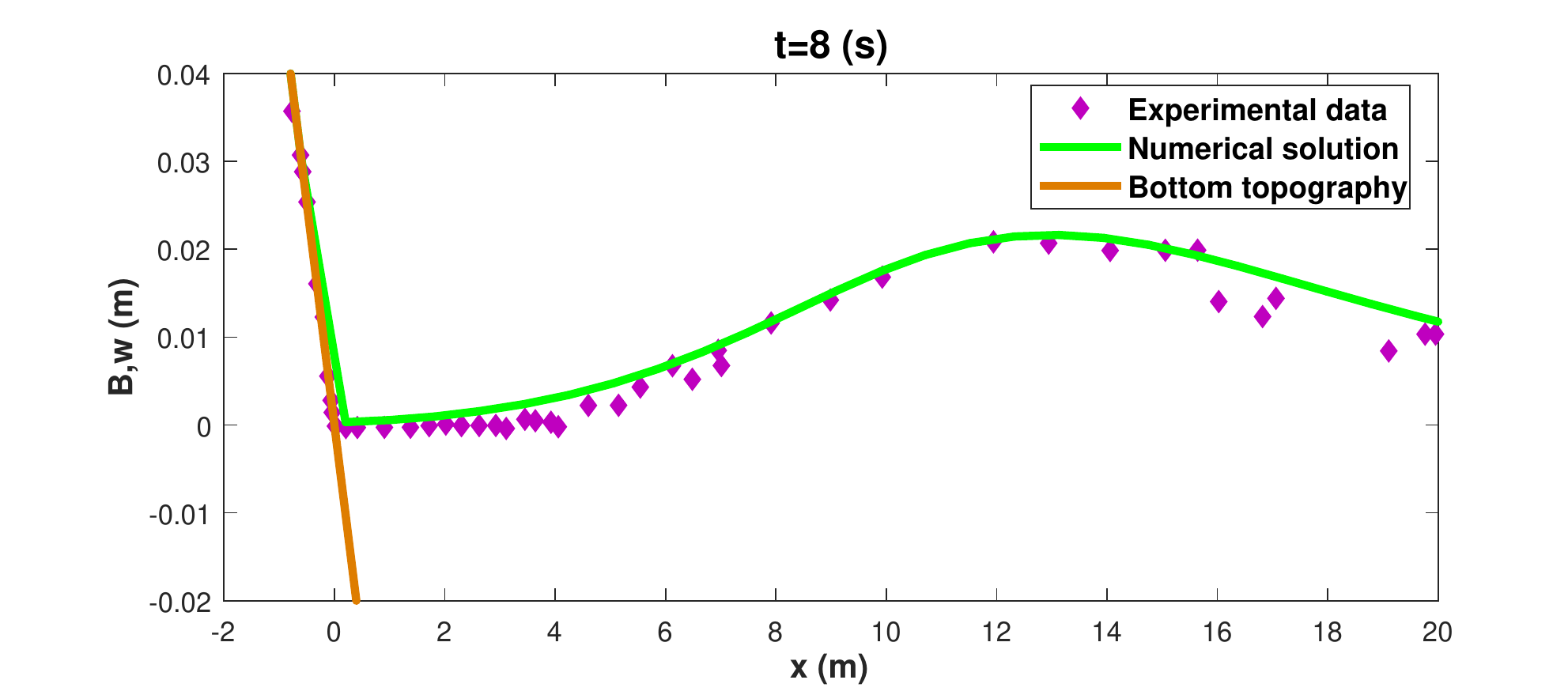}\quad \includegraphics[scale=0.23]{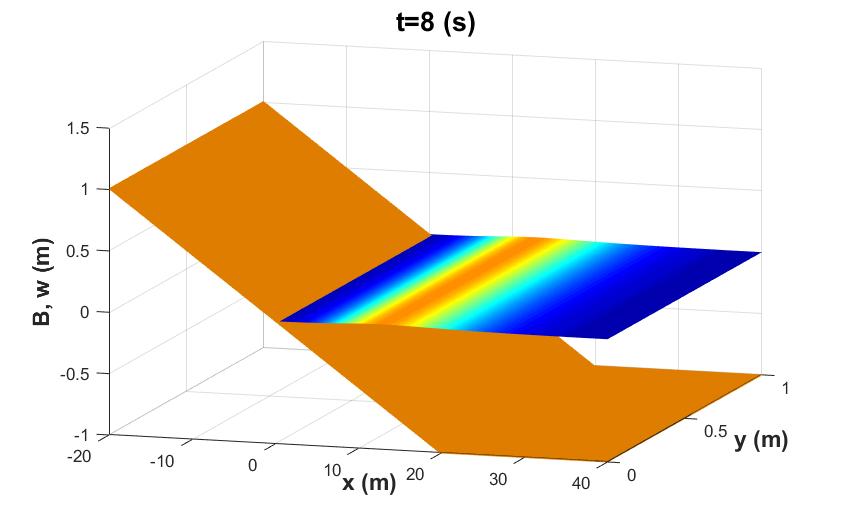}

\includegraphics[scale=0.38]{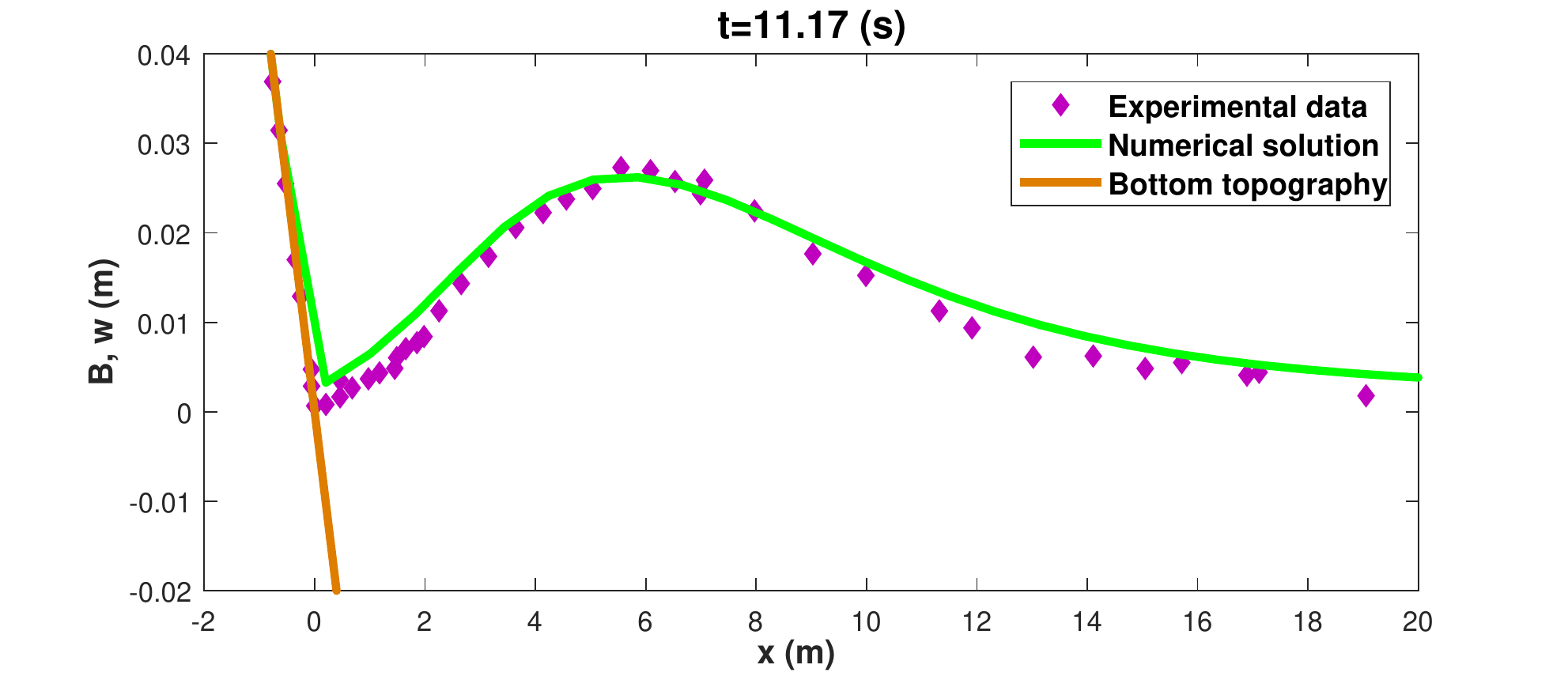}\quad \includegraphics[scale=0.22]{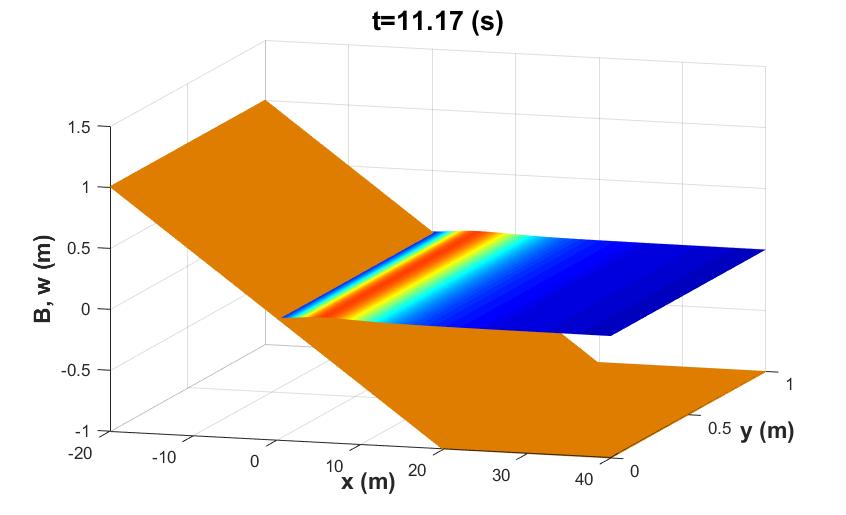}
\includegraphics[scale=0.38]{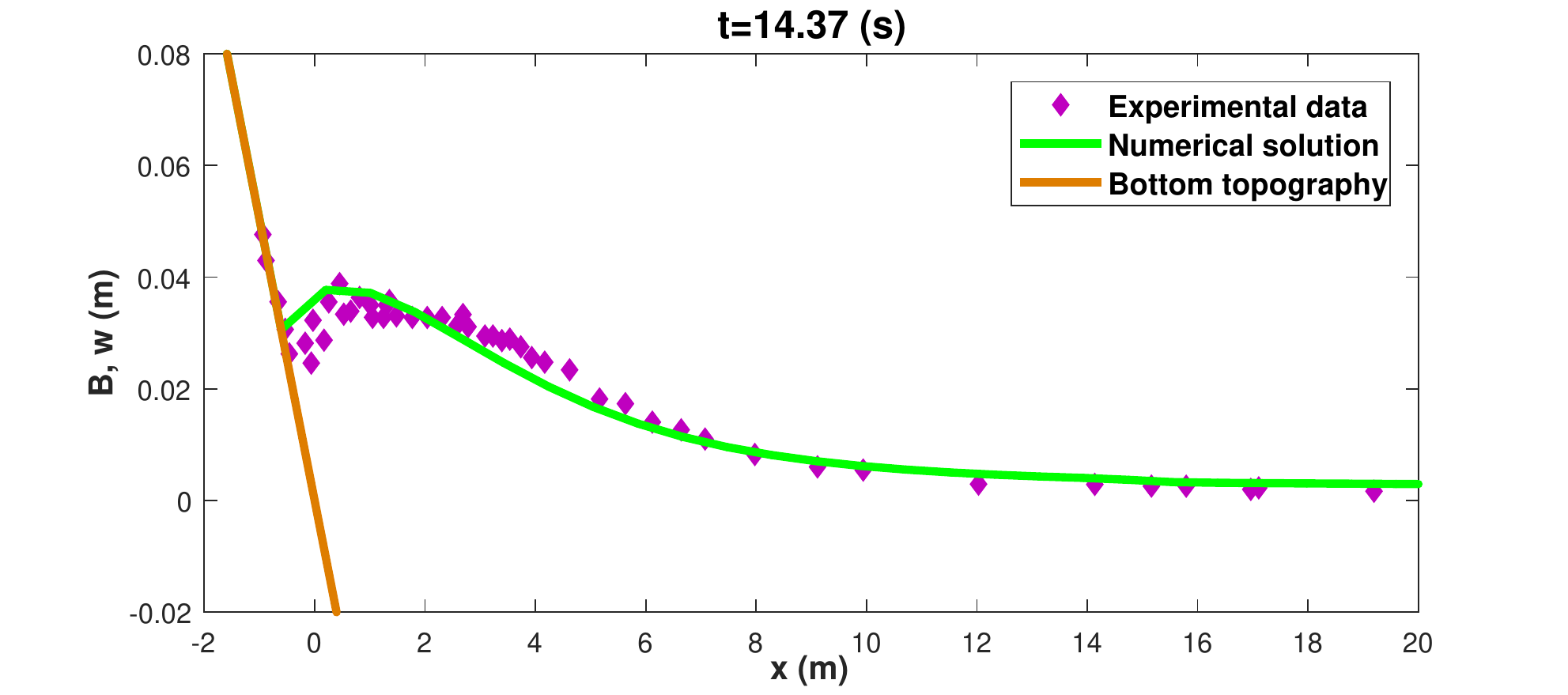}\quad \includegraphics[scale=0.22]{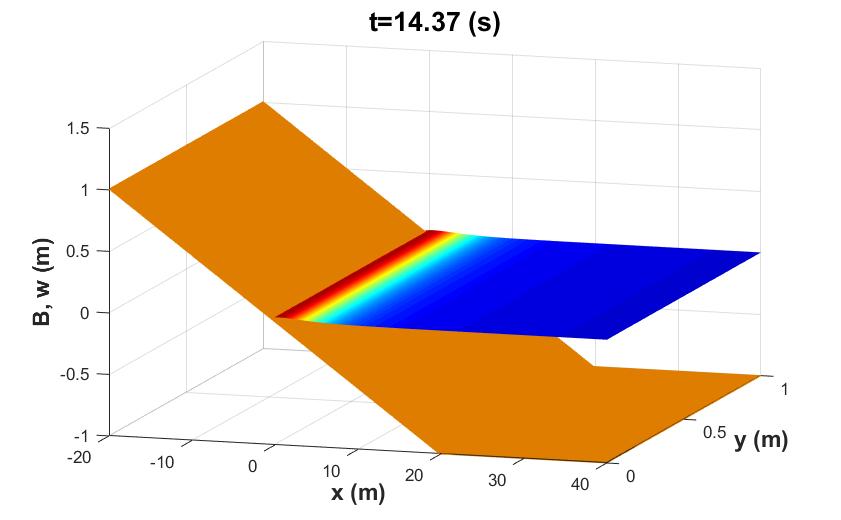}
\includegraphics[scale=0.38]{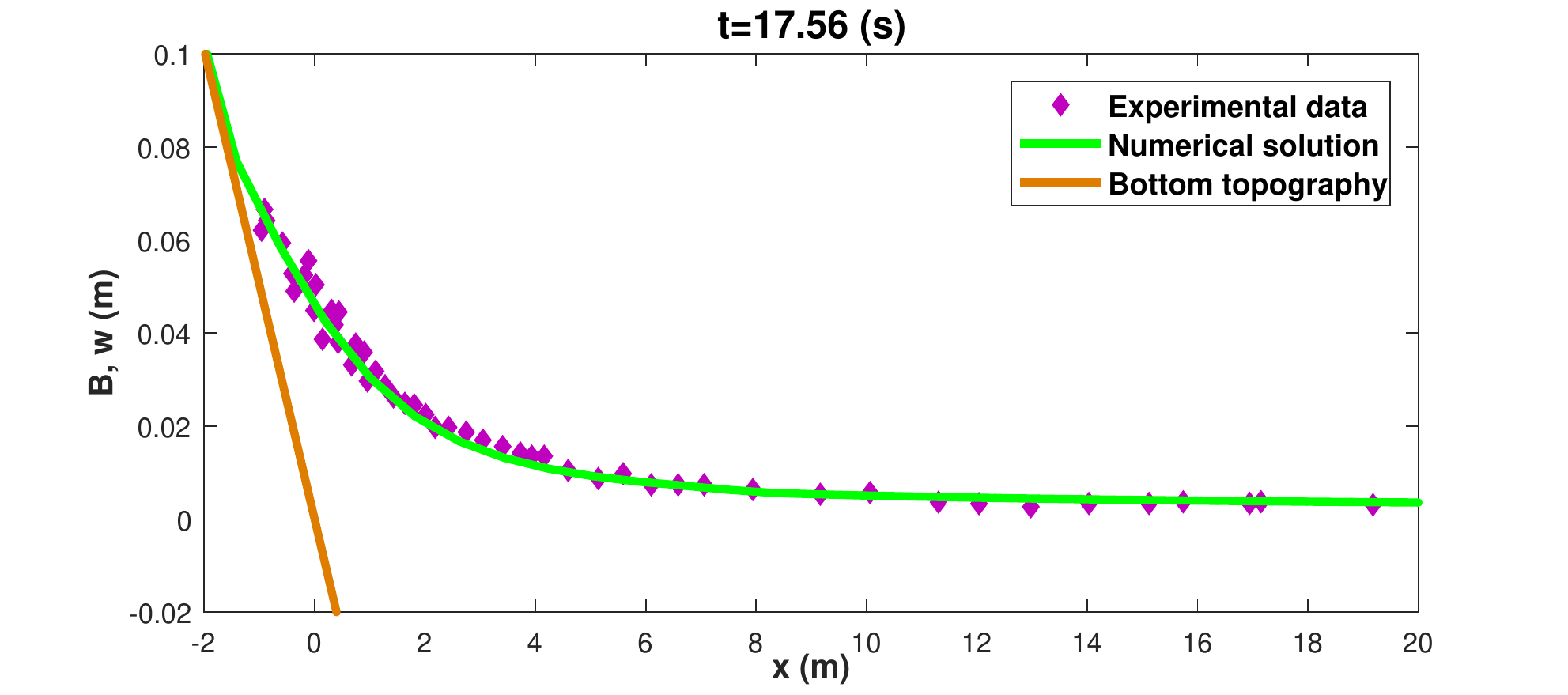}\quad \includegraphics[scale=0.22]{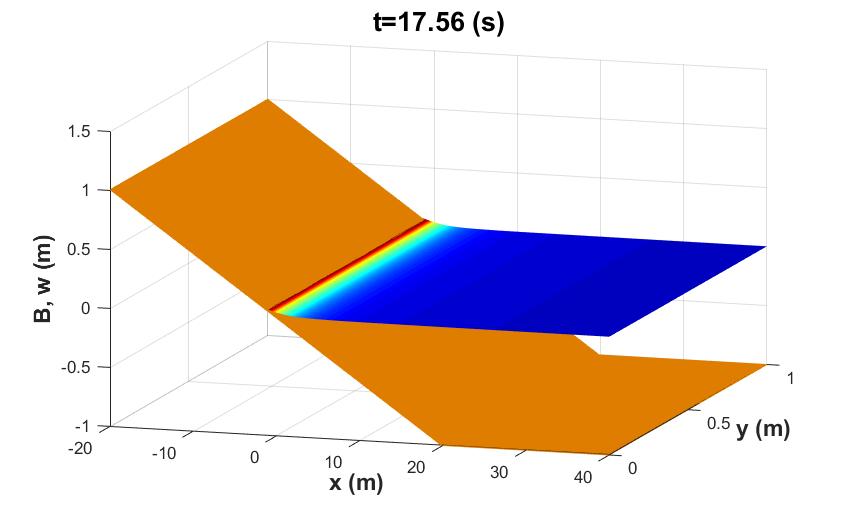}
\includegraphics[scale=0.38]{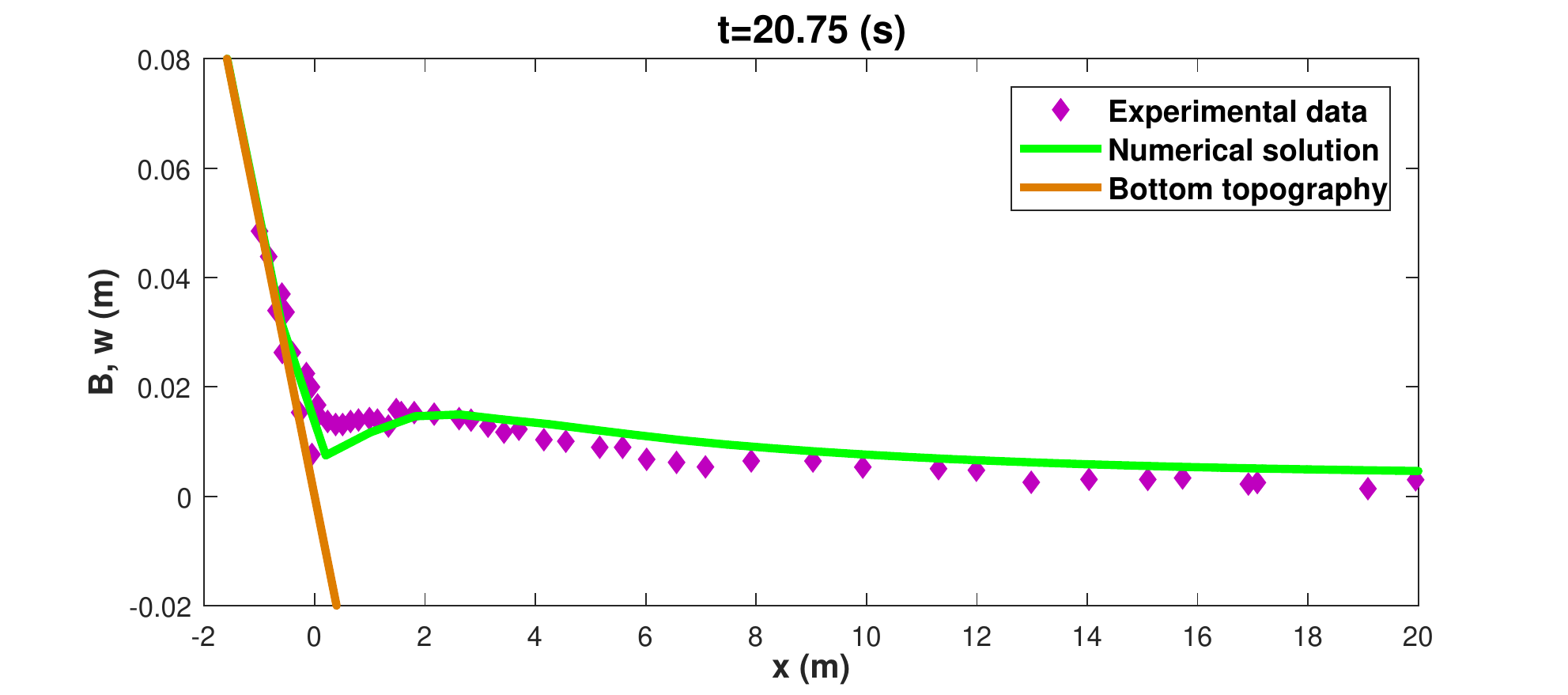}\quad \includegraphics[scale=0.22]{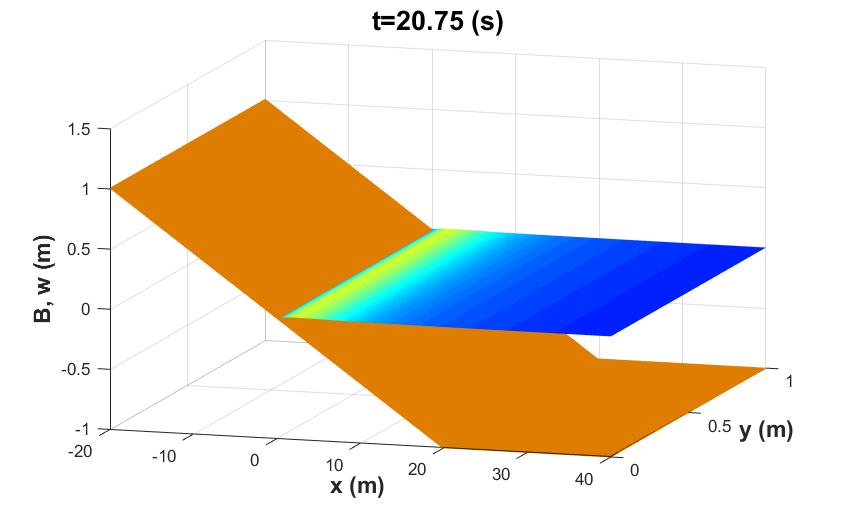}
\caption{Modeling of non-breaking solitary wave over a sloping beach:  Predicted free surface elevation compared to experimental measurements at different times (left) and the corresponding three-dimensional view (right).}
\label{Fig4}
\end{center}
\end{figure}
\newpage
\subsection{Periodic waves propagation on a sloping beach}\label{Ex3}
Here, we will asses the ability of the numerical model for predicting the evolution and run-up of periodic waves on a sloping beach, where the numerical results are compared with data from the Cox laboratory experiment \cite{cox1995experimental}.
%The Cox experiment \cite{cox1995experimental} is considered in this numerical example to asses the model's ability  for predicting periodic waves propagating over a sloping beach \cite{marche2005theoretical,duran2015discontinuous,duran2014recent}. 
The experiment was conducted in a rectangular flume of $33\,m$ long, $0.6\, m$ wide, and $1.5\, m$ height with  $1:35$ sloping bed.  The periodic waves of height $H=0.115\, m$ and period $T=2.2\ s$ were generated at the left boundary of the flume and propagate over a still water of constant depth $h_0=0.4\,m$. Measurements for free-surface elevation and velocity were taken at four different locations in space as shown in Figure \ref{FigP}.
%A sketch representing the Cox laboratory experiment is shown in Figure \ref{FigP}. \\
In our numerical simulations, we consider the computational domain  $[0, 8]\times [0, 0.6]$ which is discretized into  $15445$ triangular cells. The water surface elevation is computed at the locations $L_1$ $(0, 0.3)$, $L_2$ $(1.2, 0.3)$, $L_3$ $(2.4, 0.3)$, and $L_4$ $(3.6, 0.3)$. We use the set of experimental data for free-surface elevation and velocity at $L_1$ in our numerical tests as time-varying inflow condition at the left boundary of the domain to generate incoming wave \cite{duran2014recent,bonneton2003dynamique,marche2005theoretical}. Wall boundary conditions are considered at the lateral sides and outflow condition is imposed at the right side of the computational domain. The Manning friction coefficient is set to $n_f=0.01\, m^{-1/3}s$.

\begin{figure}[!ht]
\begin{center}
\includegraphics[scale=0.55]{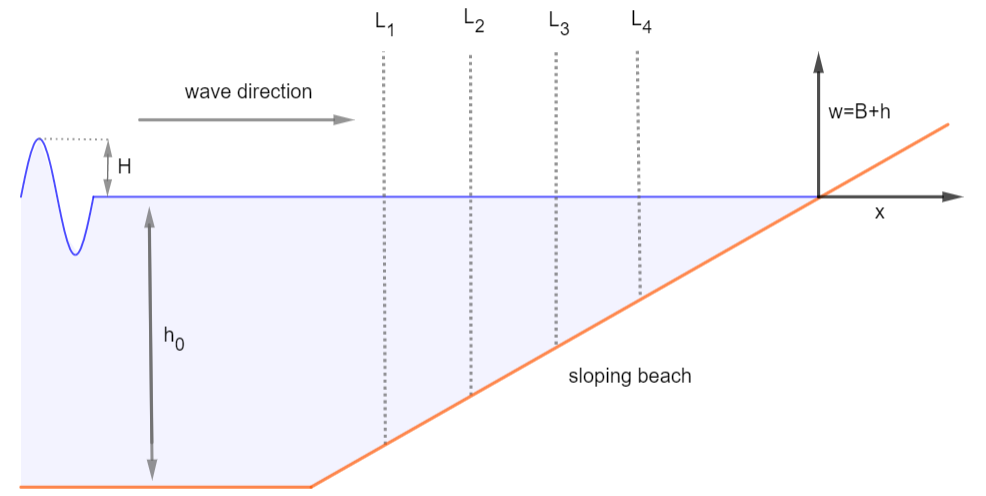}
\caption{A sketch of periodic wave propagation on a sloping beach.}
\label{FigP}
\end{center}
\end{figure}

%The time evolution of free-surface elevation computed at different locations $L_1$, $L_2$, $L_3$, and $L_4$ compared with  experimental measurements is presented in the Figure \ref{Fig5}. 
The time evolution of the computed free-surface elevation and experimental measurements at different locations $L_1$, $L_2$, $L_3$, and $L_4$ are shown in  Figure \ref{Fig5}. A good description of the wave distortion  with sharped  profile is observed when the wave approaches the bed slope shoreline. The numerical results show that the proposed numerical model is accurate in predicting the evolution of periodic waves on the sloping beach. 
%. The  simulations results show that, the proposed numerical model predicts well the evolution of periodic waves propagating over a sloping beach compared to laboratory experimental data.
\begin{figure}[!ht]
\begin{center}
\includegraphics[scale=0.38]{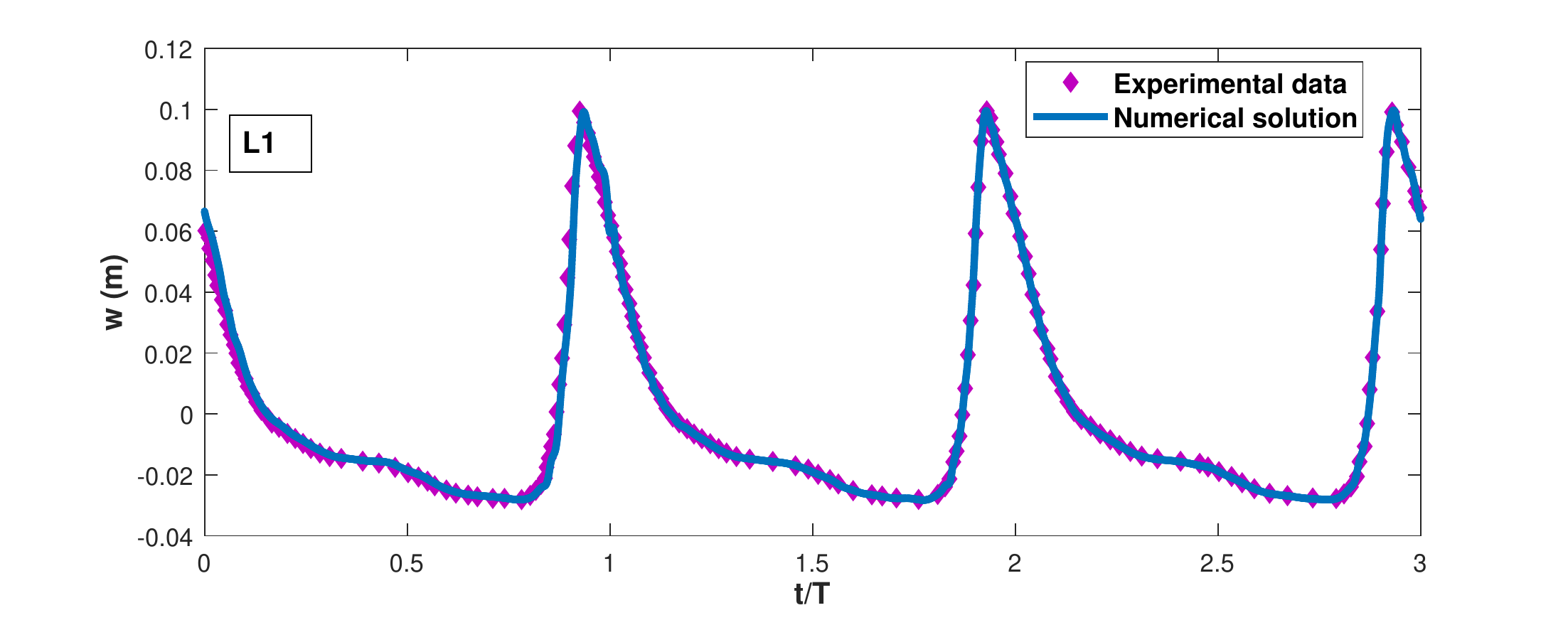}\includegraphics[scale=0.38]{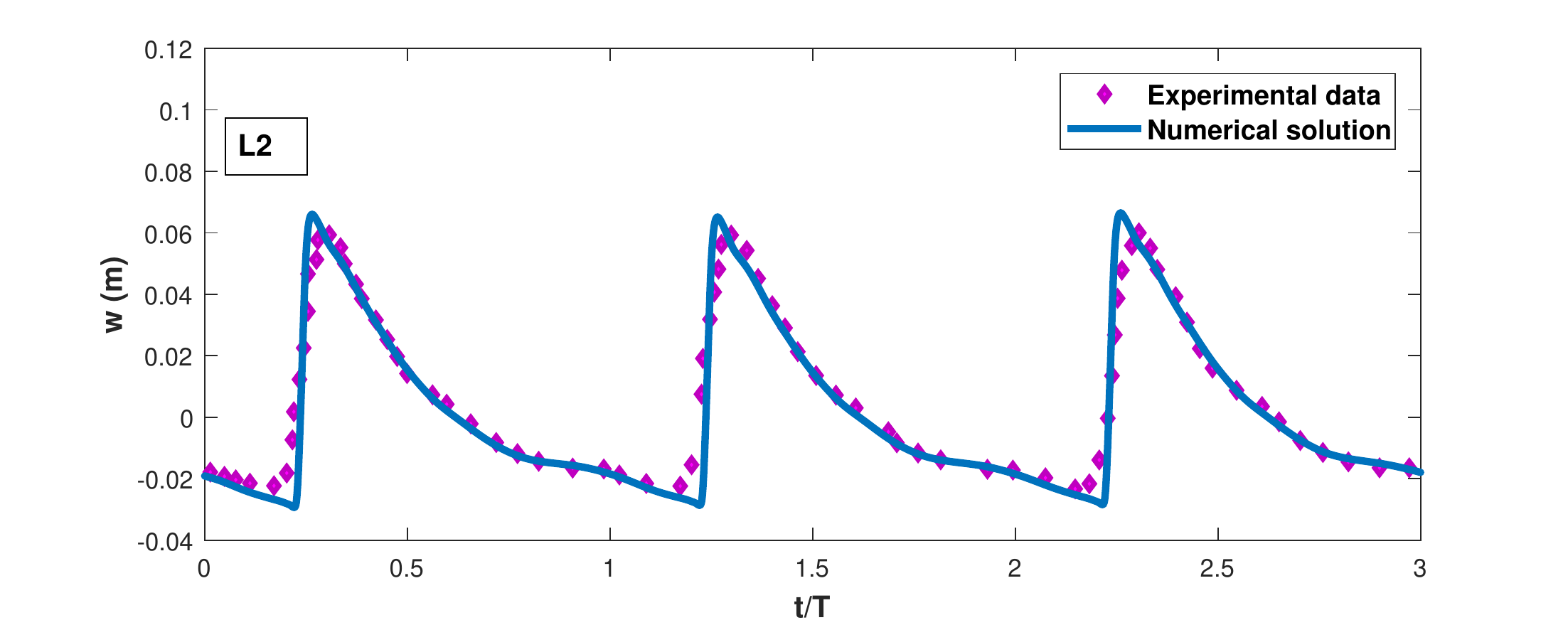}\quad 
\includegraphics[scale=0.38]{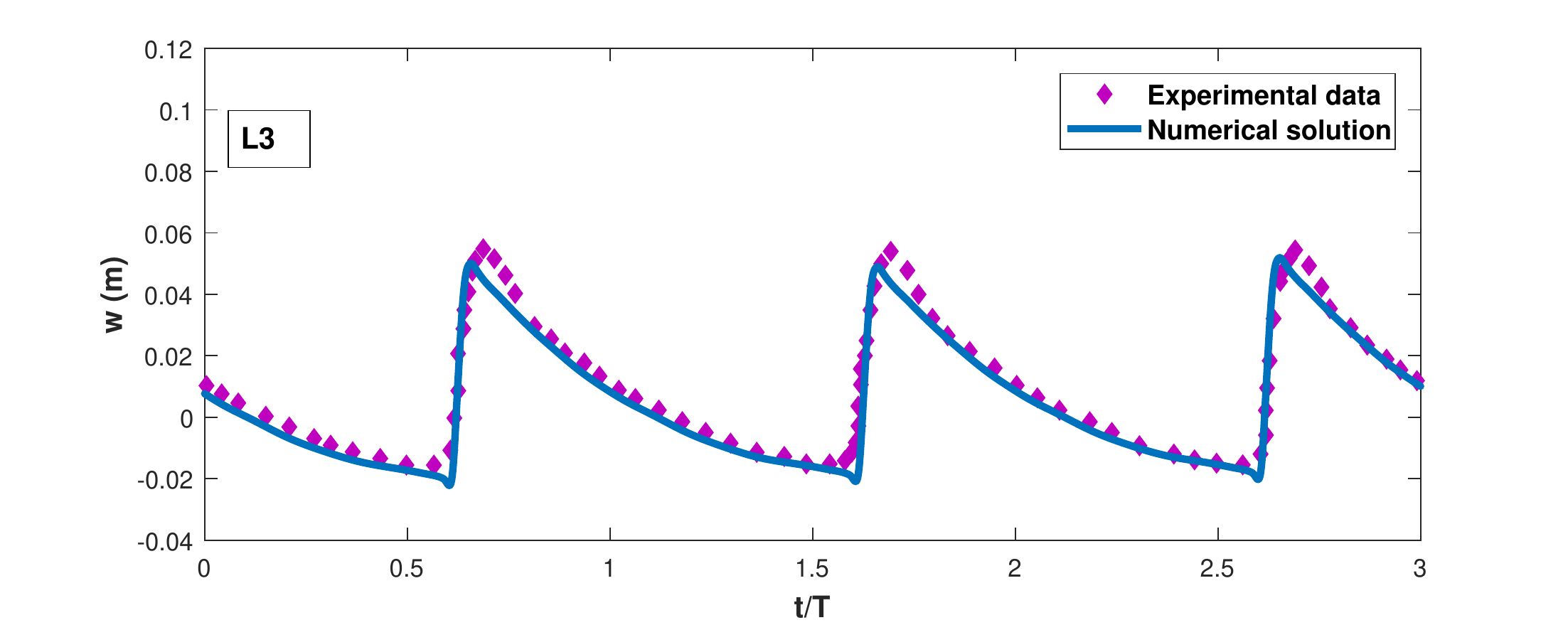}\includegraphics[scale=0.38]{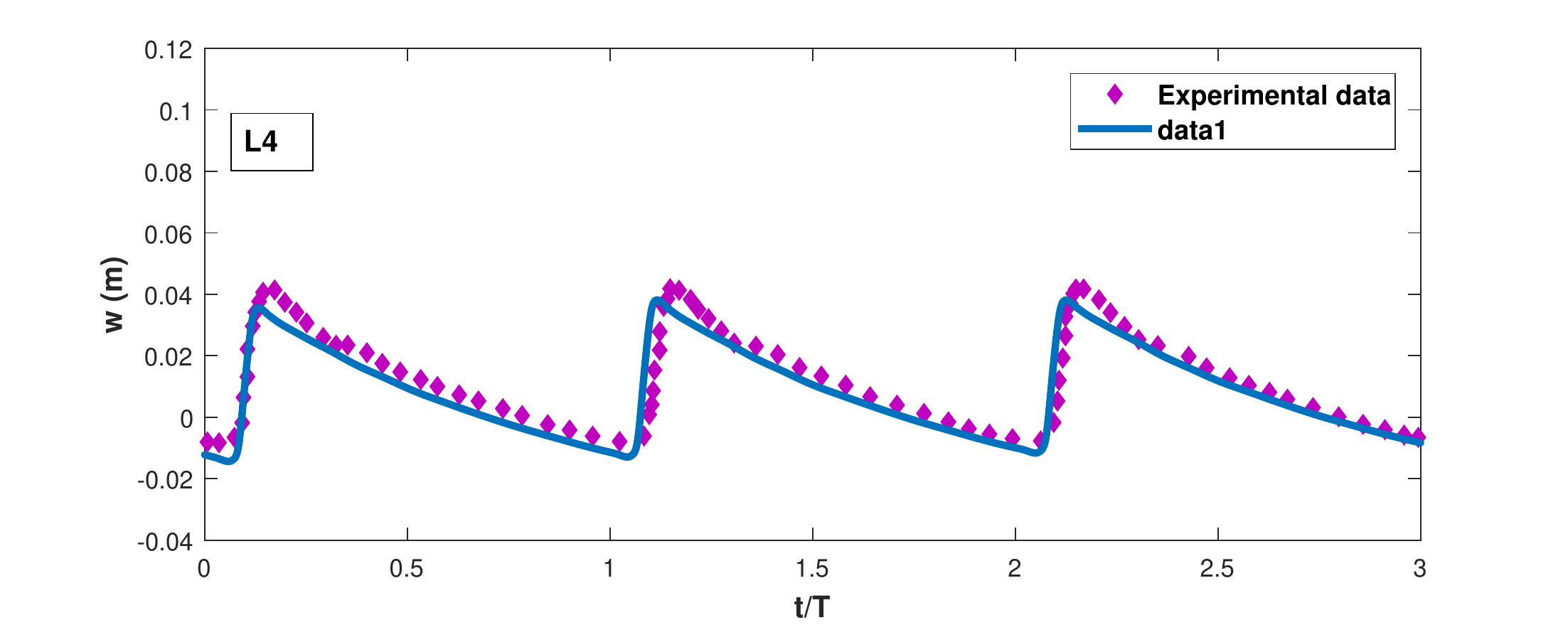}\quad 
\caption{The time evolution of simulated and measured free-surface elevation at the four locations.}
\label{Fig5}
\end{center}
\end{figure}

\newpage
\subsection{Solitary wave run-up on a conical island}\label{Ex4}
% at the Coastal and Hydraulics Laboratory, Engineer Research and Development Center of the U.S. Army Corps of Engineers
In this numerical example, we further test the accuracy of the proposed numerical model where we consider a laboratory experiment of solitary wave on a conical island. The experiment was carried out by Briggs et al. \cite{briggs1995laboratory} and it is a simple representation of the Babi island in the Flores Sea in Indonesia. This laboratory experiment was conducted on a basin of $25\, m$ long, $30\, m$ wide with a circular island of base diameter $7.2\, m$, top diameter  $2.2\, m$, height
$0.625\, m$, and side slope $1:4$  centered  at $\bm{x}=(12.98,13.80)$ as shown in Figure \ref{Figbr}. A directional spectral wave-maker was used to generate waves at the left boundary of the basin. In the numerical simulations, the computational domain $[0, 25]\times [0,30]$ is discretized into 90879 triangular cells. Initially,  the water is at rest and incoming solitary waves of height $H$ and constant depth $h_0=0.32\,m$  are generated at the left boundary of the domain, given by \cite{nikolos2009unstructured,bradford2002finite}:
\begin{equation}
w(t)=H\sech^2\left[\sqrt{\dfrac{3 H}{4h_0^3}}ct \right],\,\, {u}(t)=\dfrac{c w(t)}{w(t)+h_0},
\end{equation}
where we study the case for $H/h_0=0.1$ and $H/h_0=0.2$ solitary waves. The Manning friction coefficient $n_f=0.016\, m^{-1/3} s$ is used,  and wall boundary conditions are imposed at the lateral parts while inflow and outflow boundary conditions are respectively used at the left and right sides of the domain. The numerical simulations are performed until the final time $t=20\, s$ and the computed free-surface elevation is compared to experimental data at four locations $S_1$ $(9.36, 13.80)$, $S_2$ $(10.36, 13.8)$, $S_3$ $(12.96, 11.22)$, and $S_4$ $(15.56, 13.80)$ distributed around the island.
%Initially the still water depth is  $h_0=0.32\,m$ and incoming solitary wave of amplitude $H= 0.06$ is imposed at the left boundary centred  at $x=0\,m$.
\begin{figure}[!ht]
\begin{center}
\includegraphics[scale=0.55]{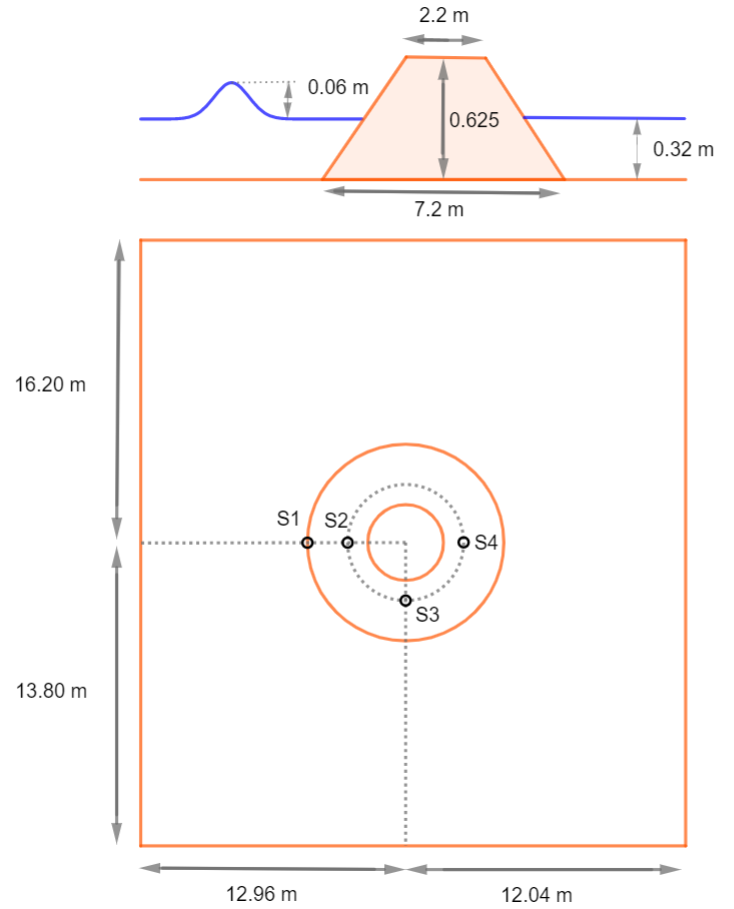}
\caption{A sketch for the solitary wave on a conical island. }
\label{Figbr}
\end{center}
\end{figure}
The predicted and measured free-surface elevation of the solitary waves are presented in Figure \ref{Fig31}. We observe a good agreement between  the computed numerical solution and laboratory experimental data where the maximum amplitude, arrival time, and phase of the leading waves are well predicted.  Furthermore, as emphasized in the previous studies \cite{liu1995runup,yamazaki2009depth,wei2006well}, the results show that the breaking appears everywhere around the island for a solitary wave with $H/h_0=0.2$ which is characterized by a steeper front as it passes around the conical island.
%Due to the absence of dispersive term in the nonlinear shallow-water equations, the leading wave is slightly less accurately predicted than the laboratory measurements \cite{escalante2019efficient,busto2021high,wei2006well}.\\
Figure \ref{Fig41} shows the three-dimensional view of the predicted solitary waves profile over the domain at times $t=4$, $6$, $8$, and $10\, s$. The waves advance first over the domain pushing a large amount of water towards to the conical island.  In both two case studies, once the wave arrives the shoreline it climbs up the island to achieve a maximum run-up and then recedes. As expected, when the wave impinges the conical island it reflects and splits into two secondary waves surrounding the island and colliding behind it \cite{liu1995runup,yamazaki2009depth,wei2006well}.

% When the wave arrives the conical island  and reflected on the front of the island which is split into two secondary waves surrounding the island to finally colliding behind it. 

%The wave advances first over the domain and then passes around the island  which is split in front of the island generating wave diffraction and  the resulting circular wave is spread out around the island.
\begin{figure}[!ht] 
\begin{center}
\includegraphics[scale=0.38]{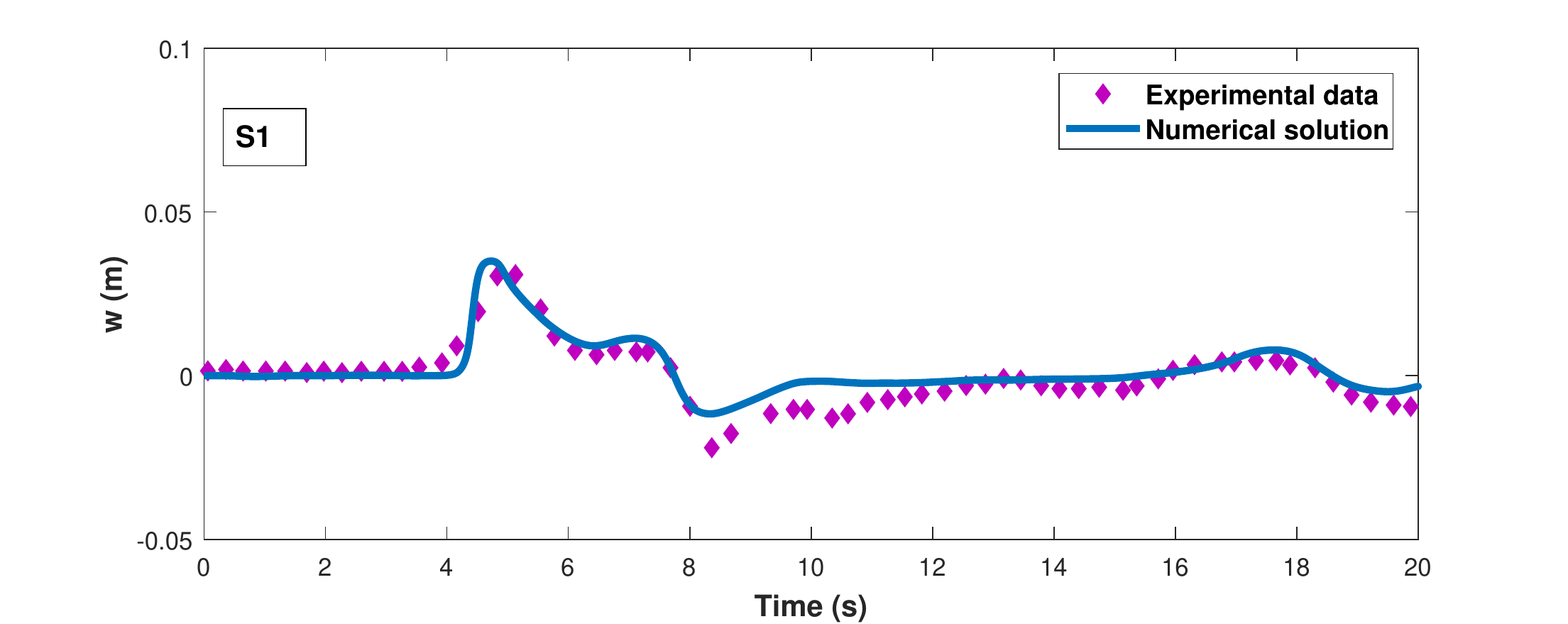}\includegraphics[scale=0.38]{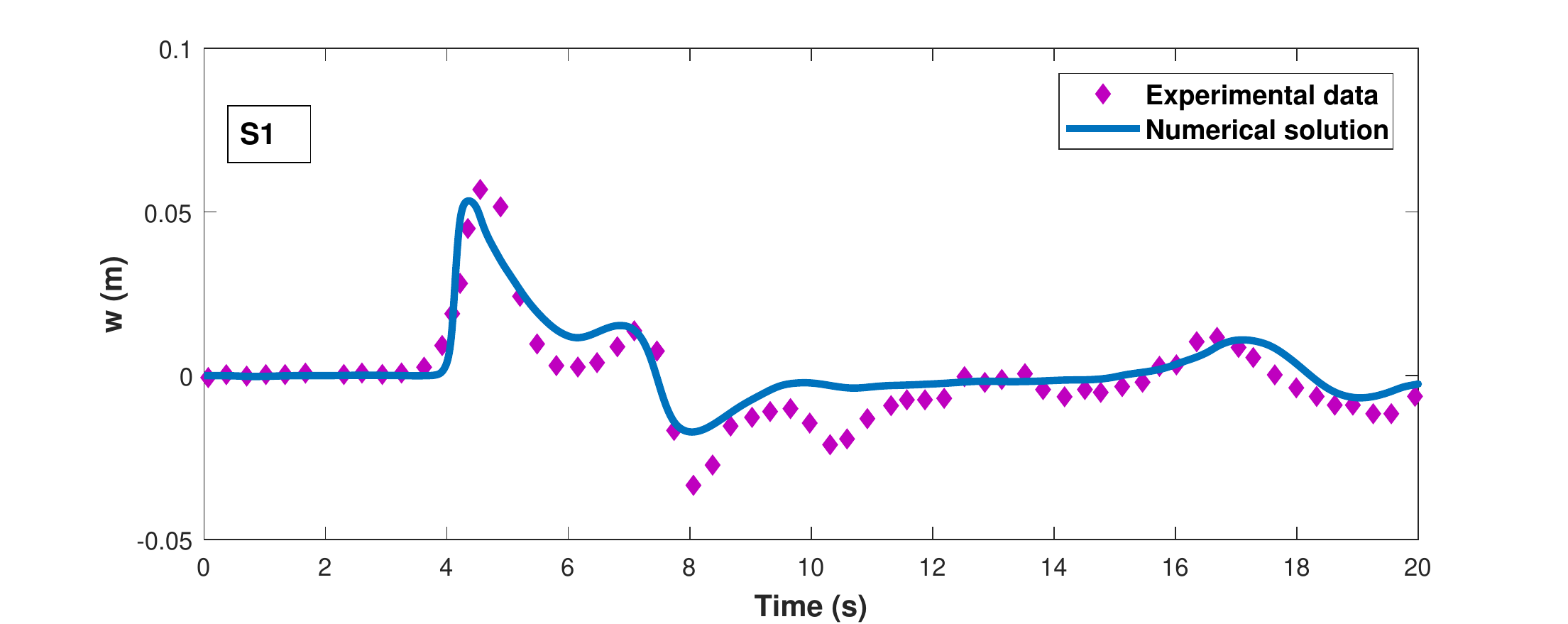}\quad 
\includegraphics[scale=0.38]{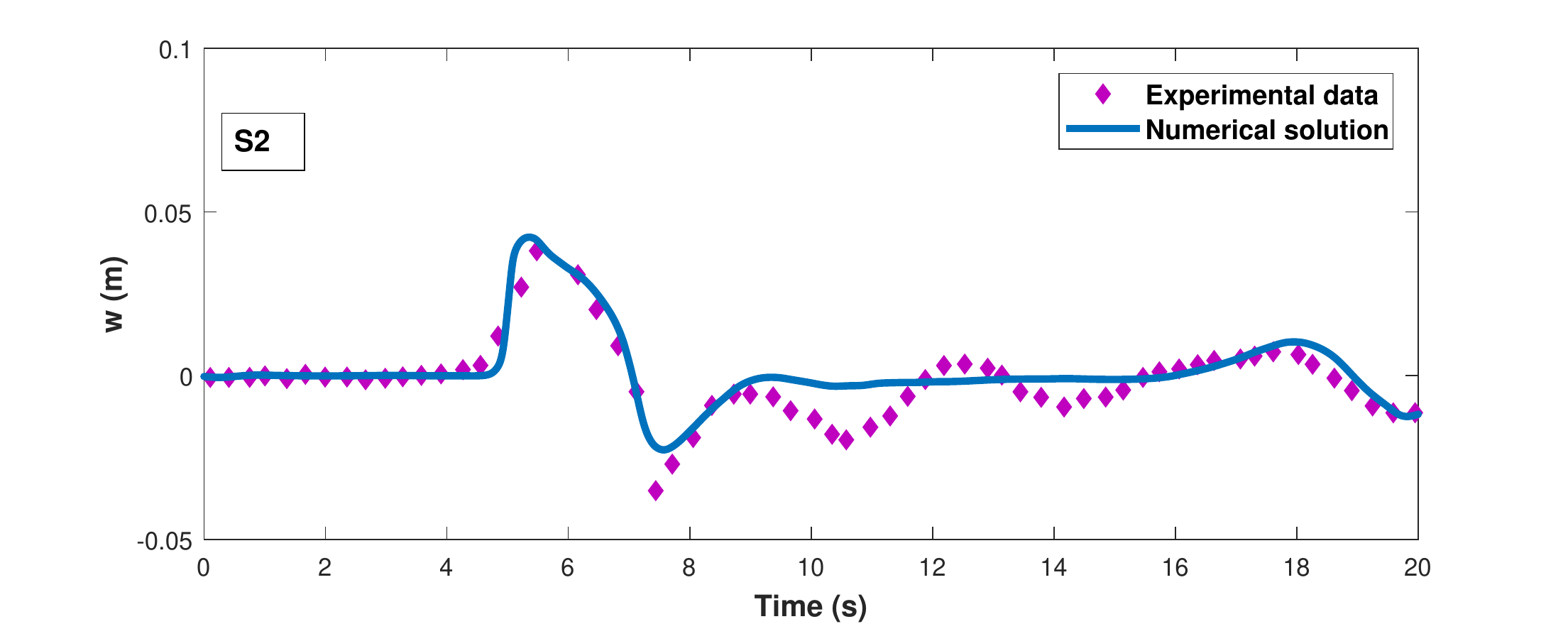}\includegraphics[scale=0.38]{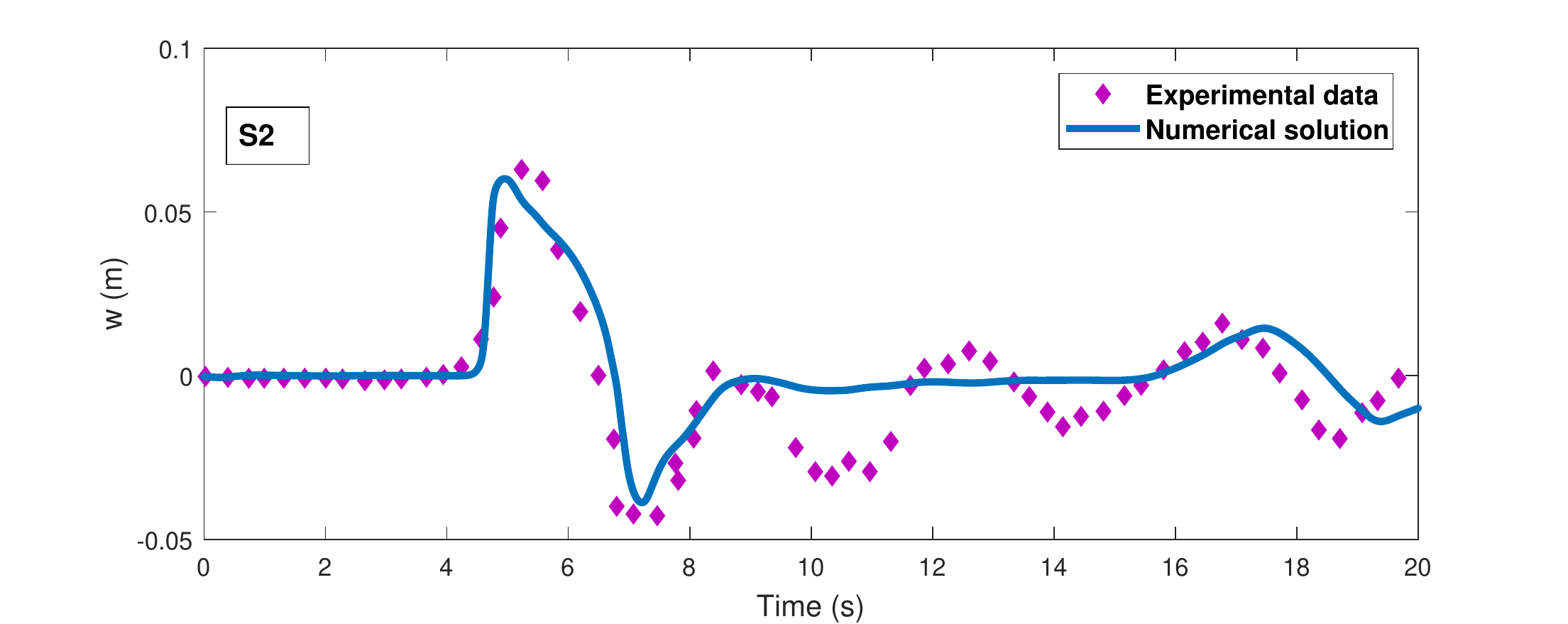}\quad 
\includegraphics[scale=0.38]{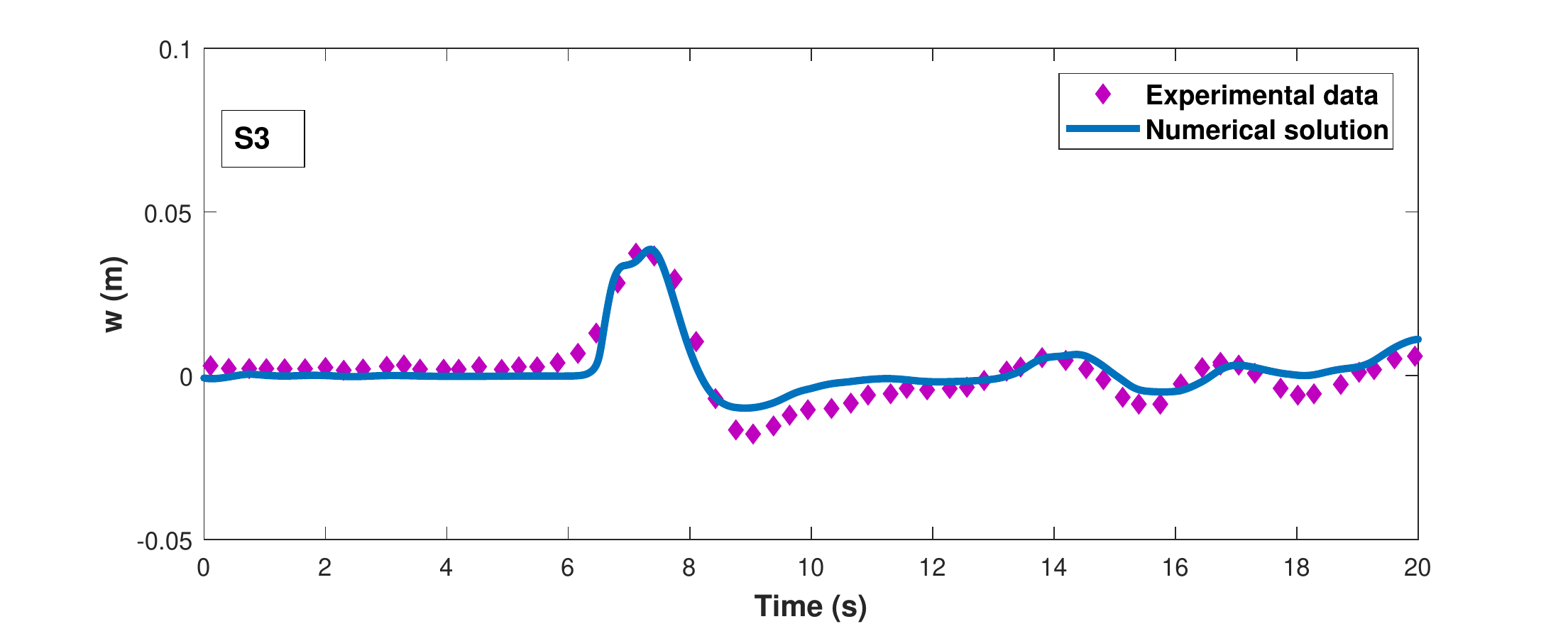}\includegraphics[scale=0.38]{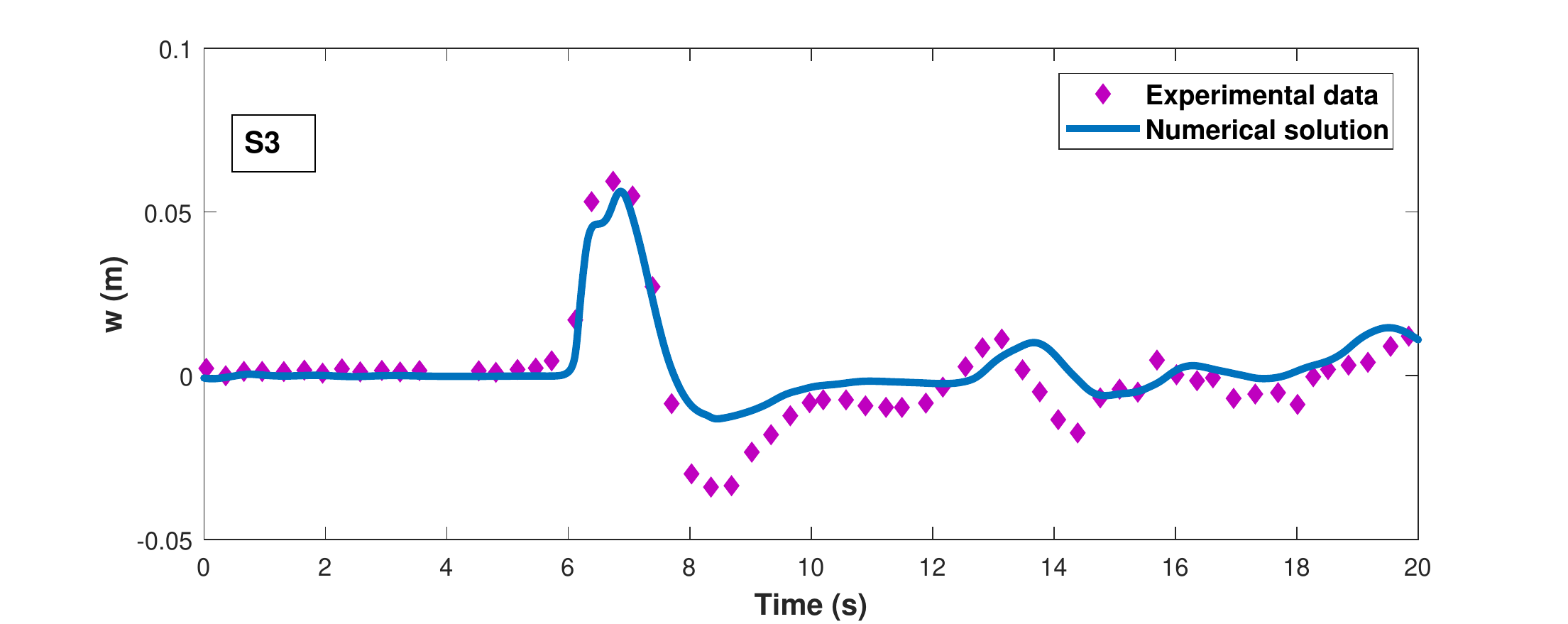}\quad 
\includegraphics[scale=0.38]{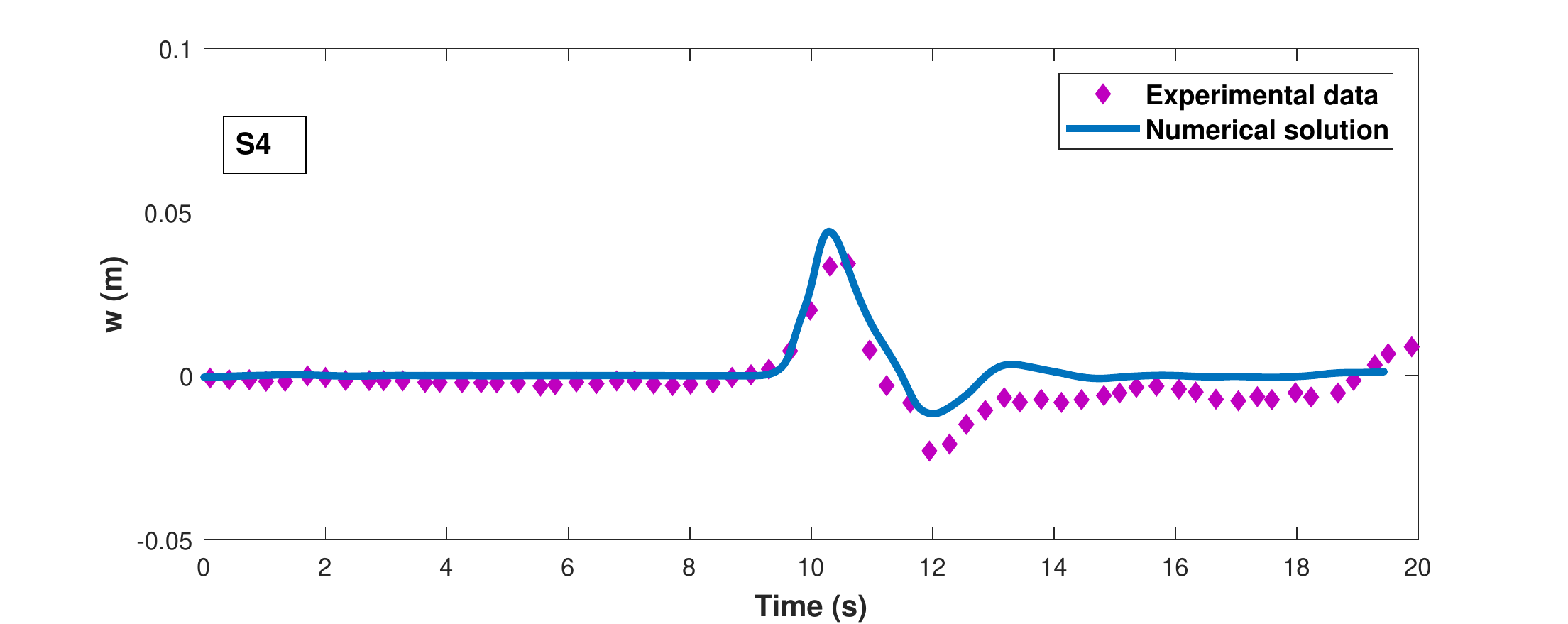}\includegraphics[scale=0.38]{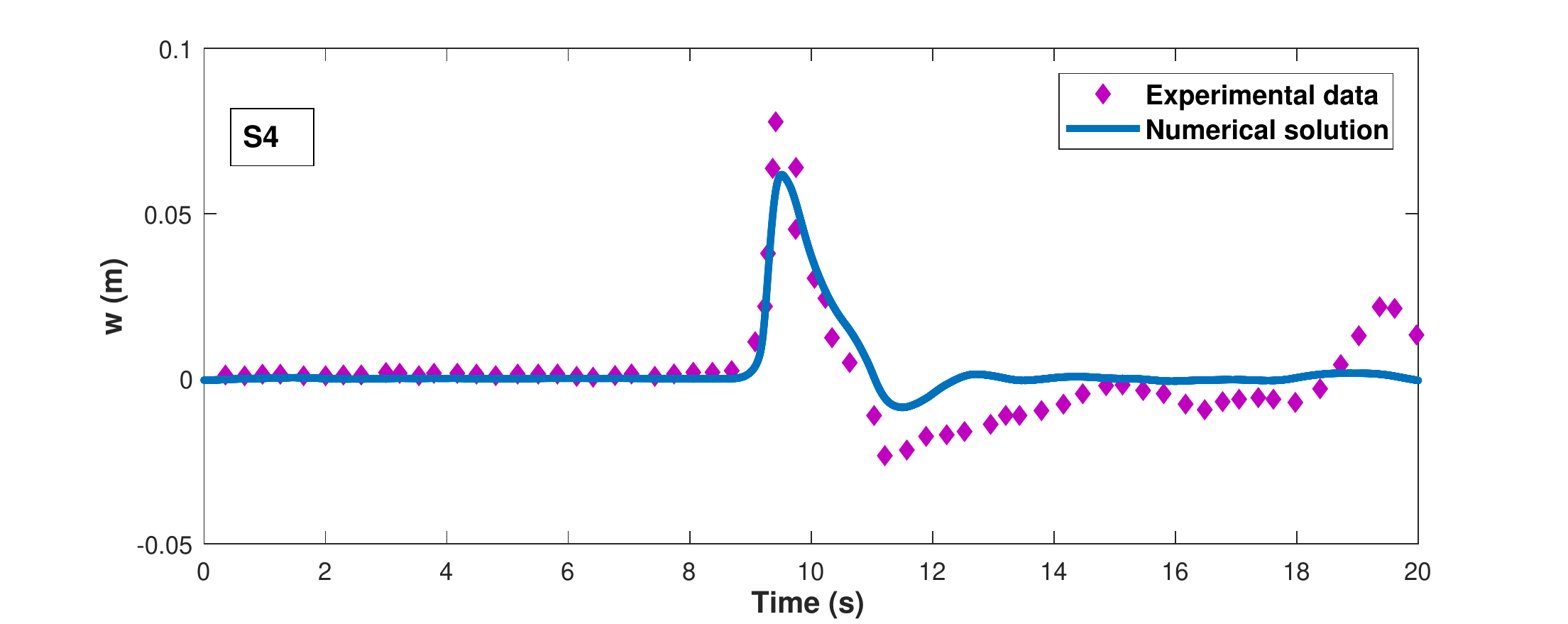}\quad  
\caption{The time evolution of the free-surface elevation computed at the four gauges: Numerical solution compared with experimental measurements for $H/h_0=0.1$ (left) and $H/h_0=0.2$ (right) solitary waves.}
\label{Fig31}
\end{center}
\end{figure}

\begin{figure}[!ht]
\begin{center}

\includegraphics[scale=0.18]{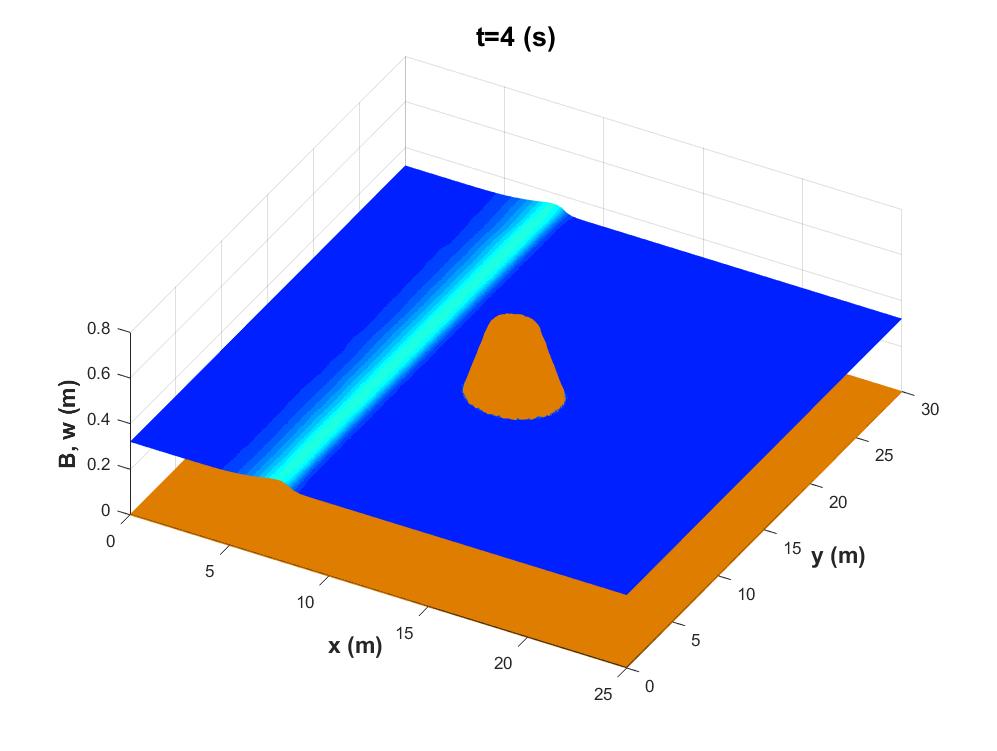}\quad
 \includegraphics[scale=0.18]{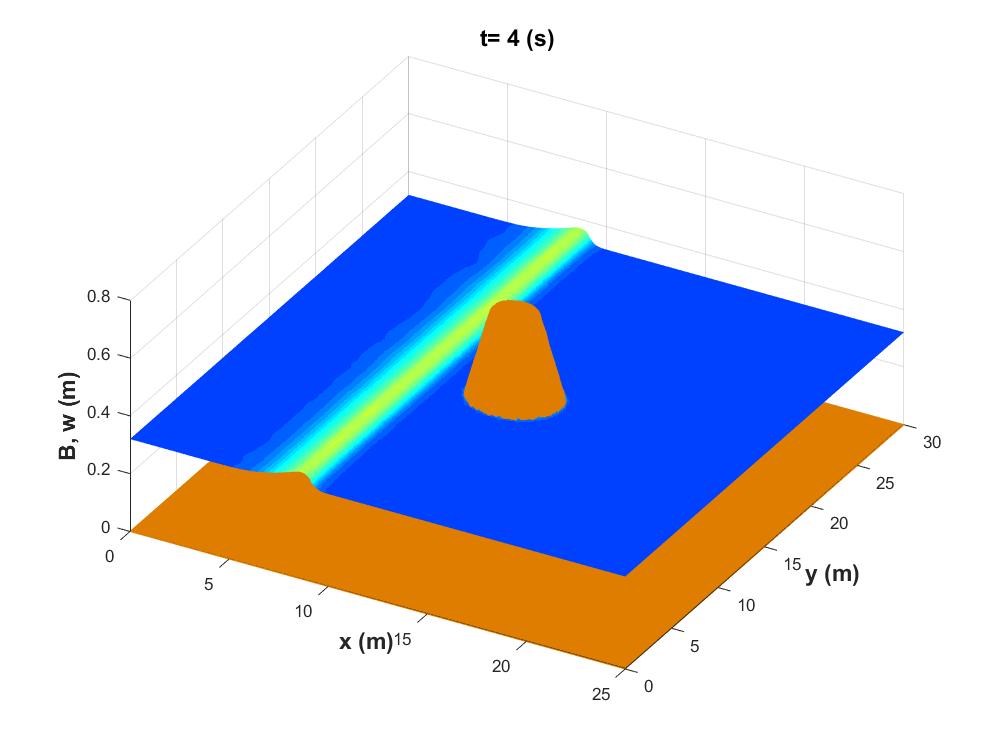}
 
\includegraphics[scale=0.18]{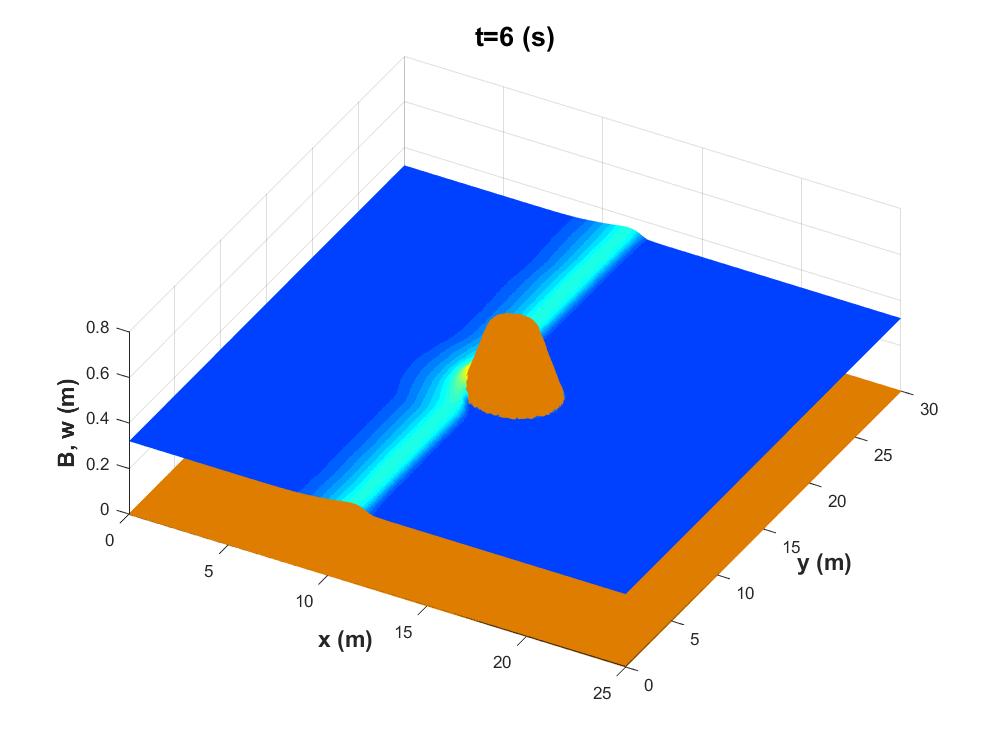}\quad \includegraphics[scale=0.18]{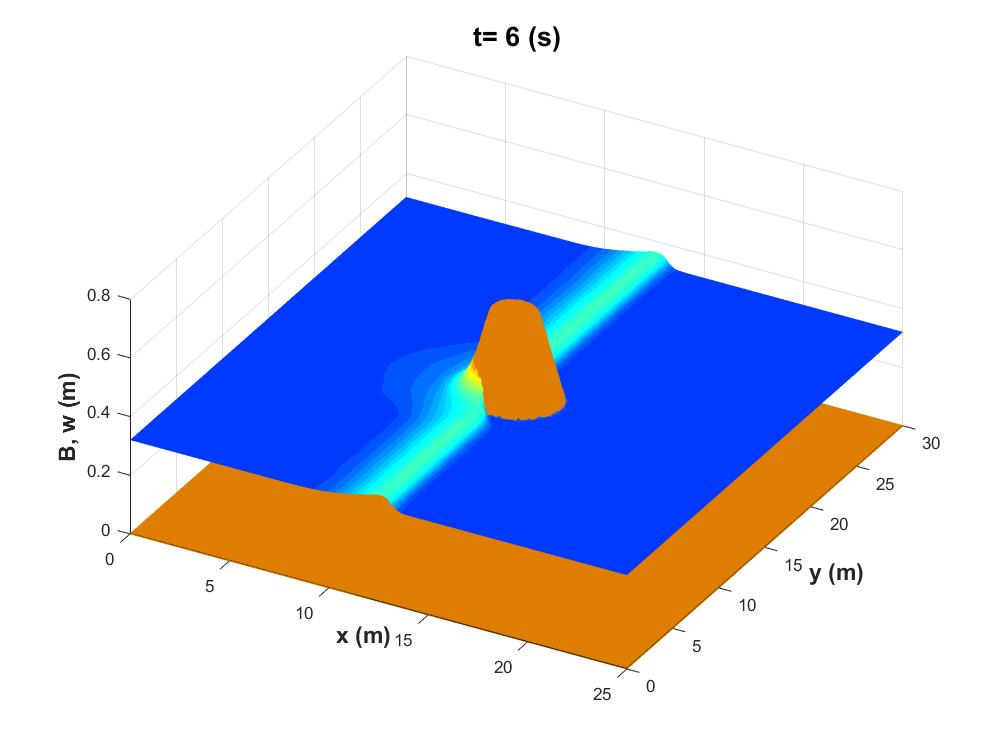}\\
\includegraphics[scale=0.18]{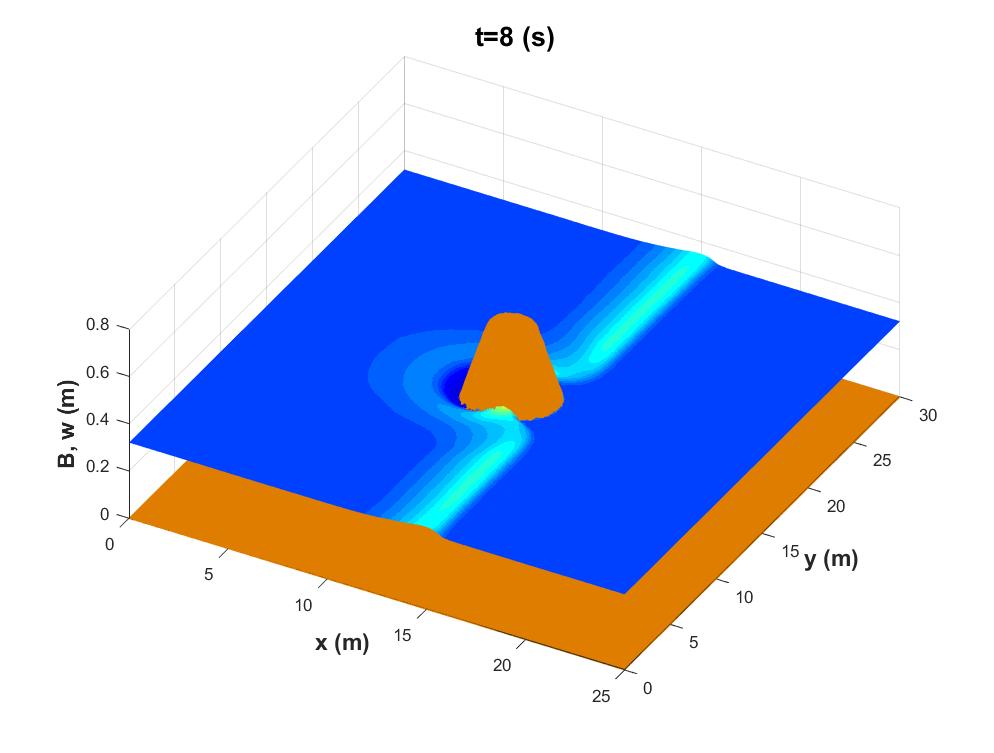}\quad
 \includegraphics[scale=0.18]{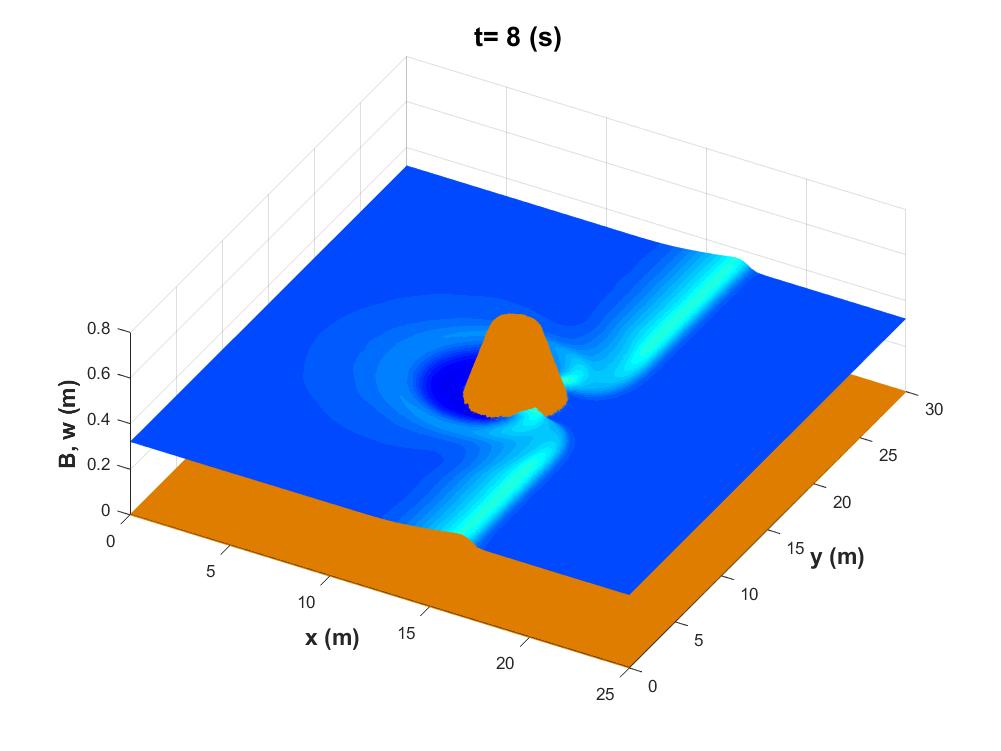}
 
\includegraphics[scale=0.18]{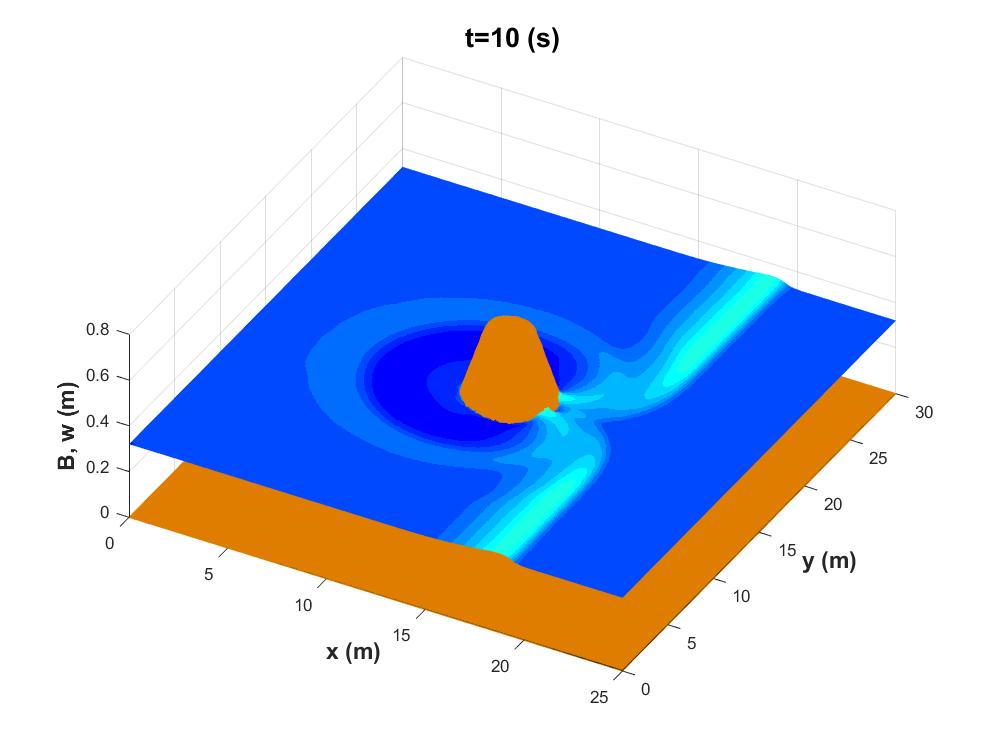}\quad \includegraphics[scale=0.18]{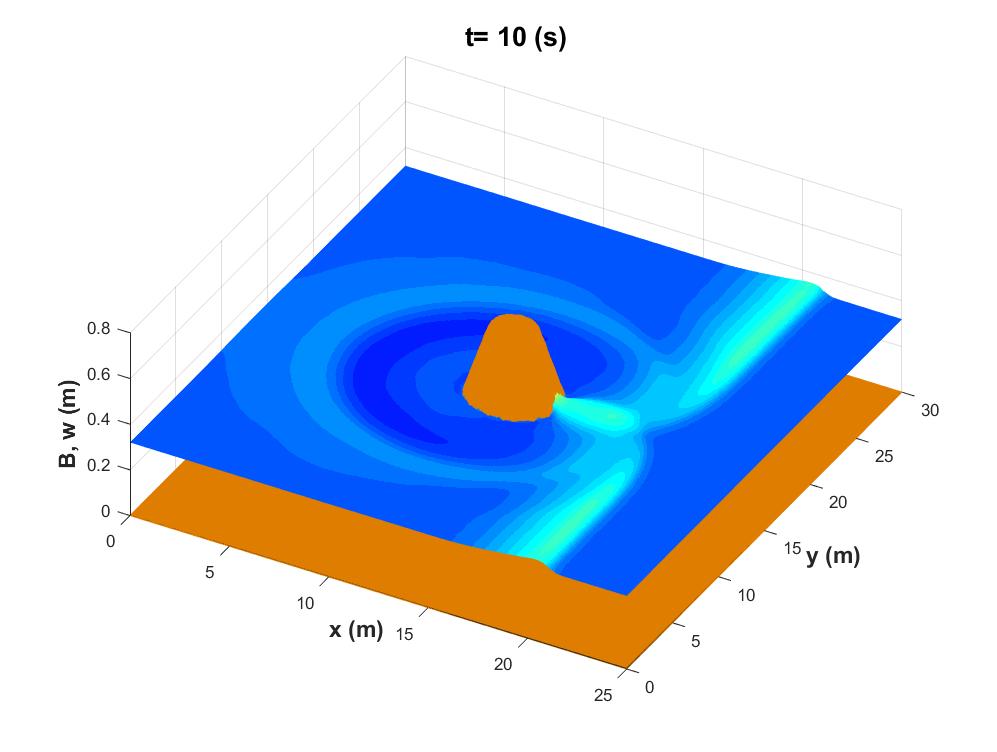}
\caption{Modeling of solitary waves for $H/h_0=0.1$ (left) and $H/h_0=0.2$ (right) over the conical island: Three-dimensional view of the computed free-surface elevation at times $t=4\, ,t=6\, ,t=8\,$,  and $t=10\, s$.}
\label{Fig41}
\end{center}
\end{figure}
\newpage
\subsection{Waves propagation over a complex sloping beach }\label{Ex5}
 %where in our case we use the bottom topography instead of still water depth.
In this numerical example, we consider a modified test used in  \cite{brocchini2001efficient} to predict the evolution of waves propagating along a sloping beach with complex bottom topography. In our study, we use the bottom topography instead of still water depth \cite{brocchini2001efficient}. The sloping beach geometry consists of a curved topography connected to a constant depth region
\cite{brocchini2001efficient,ozkan1997fourier}. The bottom topography is defined as follows:
\begin{equation}
B(x,y)=\left\{\begin{aligned}
&-h_s,\,\,\quad\quad\quad\quad\quad\quad \quad\quad\text{if}\, x<L_s\\\
& -h_s+\dfrac{0.4(x-L_s)}{3+\cos\left(\dfrac{\pi y}{L_s}\right)},\quad\text{if}\, x\geq L_s.
\end{aligned}\right.
\label{bay}
\end{equation} 
with a sill water depth $h_s=1.02\, m$ and a half-wide of the sloping beach $L_s=8\, m$. The free-surface elevation is initially constant and incoming wave is generated at the offshore boundary using the following solitary wave \cite{brocchini2001efficient}:
\begin{equation}
w(t)=\alpha h_s sech^2\left[ \sqrt{\dfrac{3g}{4h_s}\alpha(\alpha+1)}\cdot t\right],
\end{equation}
where $\alpha=H/h_s=0.2$ with $H$ is the wave height. In our simulations, the computational domain $[0,30]\times [-8,8]$ is discretized using $37334$ triangular cells. Wall conditions are applied at the side boundaries $y=L_s$ and $y=-L_s$, while inflow and outflow conditions are used at the right $x=0$ and left $x=30$ boundaries, respectively. The Manning friction coefficient is set to $n_f=0.01\, m^{-1/3} s$.

Figure \ref{Fig51} shows the evolution of wave propagating along the sloping beach simulated at different times $t=0,2,4,6,8,10,11$, and $14\, s$. The simulation results show that the wave propagates inside the domain from the offshore boundary towards to the shoreline zone generating wave run-up and run-down processes over the sloping bed. Initially, the water is at rest over the domain and at time $t=0\, s$, the incoming wave is generated at the offshore boundary of the sloping beach. At time $t=2 \,s$, the wave advances over the beach pushing a large amount of water towards to the coast. At time $t=5\,s$, we observe a wave run-up where the wave starts to climb first the lateral borders and then in the middle of the beach at about time $t=7\,s$. The wave recedes from the lateral borders towards the inside the beach while it still propagates and climbs up in the middle of the beach. The wave run-up reaches its maximum in the middle of the beach at about $t= 9\, s$ and then recedes. Finally, from time $t=15\, s$ to $t=23\, s$ the wave run-up and run-down motions occur several times and the surface water becomes progressively elongated and steady for long time due to the bed friction effects. The numerical simulations using the proposed numerical model confirm that the model is suitable in the prediction of the propagation of waves along a complex sloping beach.

\begin{figure}[!ht]
\begin{center}
\includegraphics[scale=0.19]{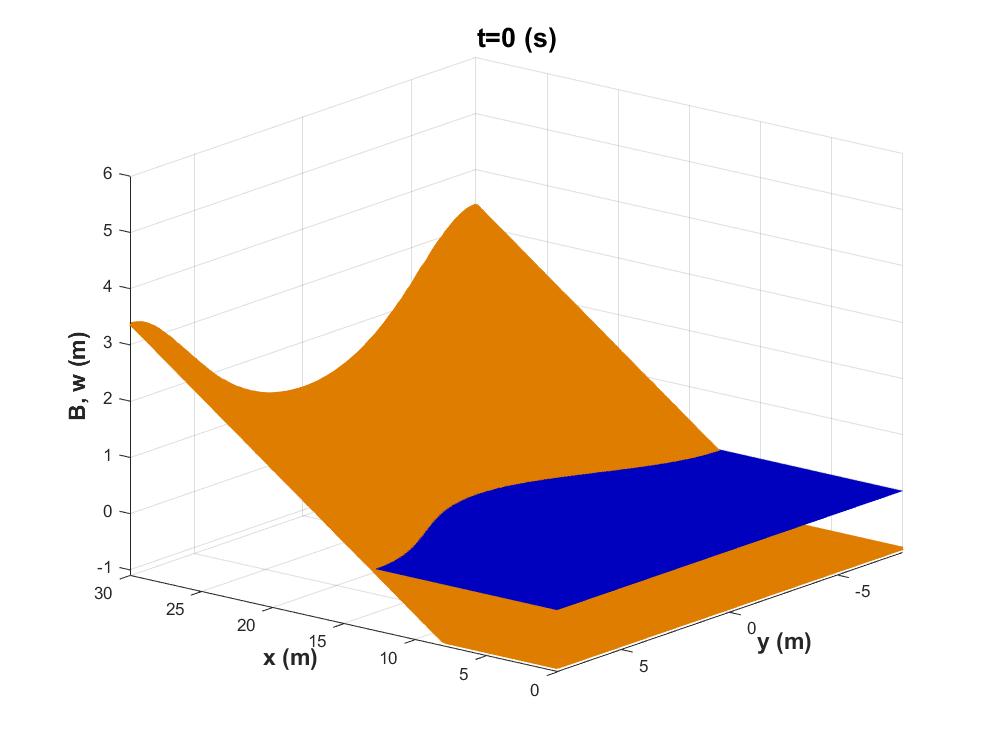}\quad
 \includegraphics[scale=0.19]{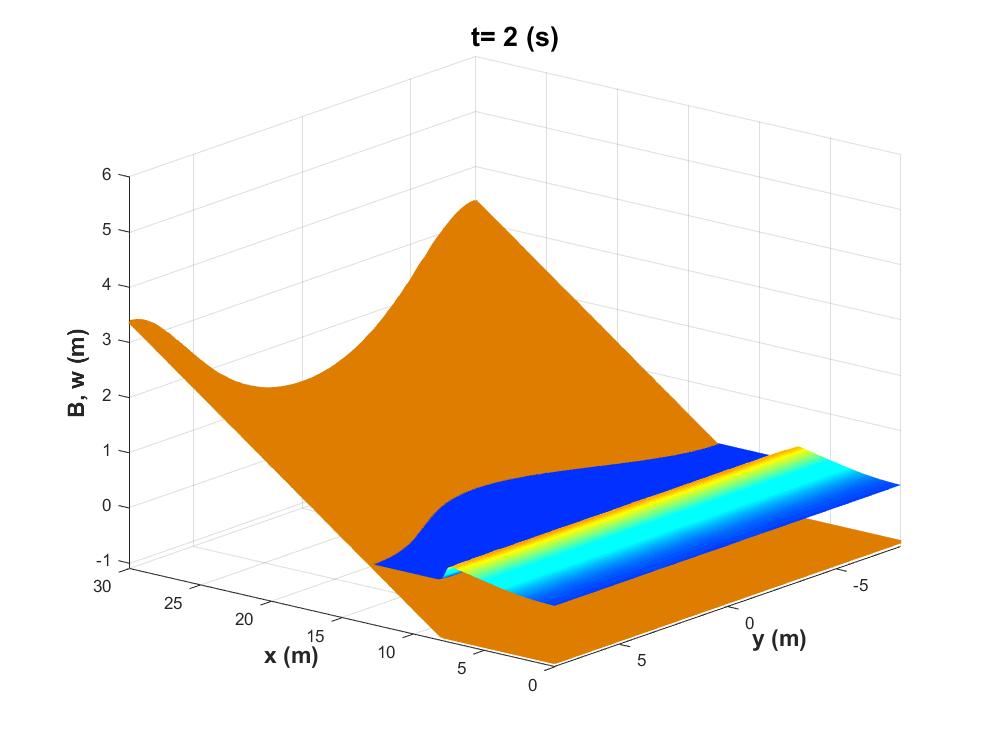}
 
 \includegraphics[scale=0.19]{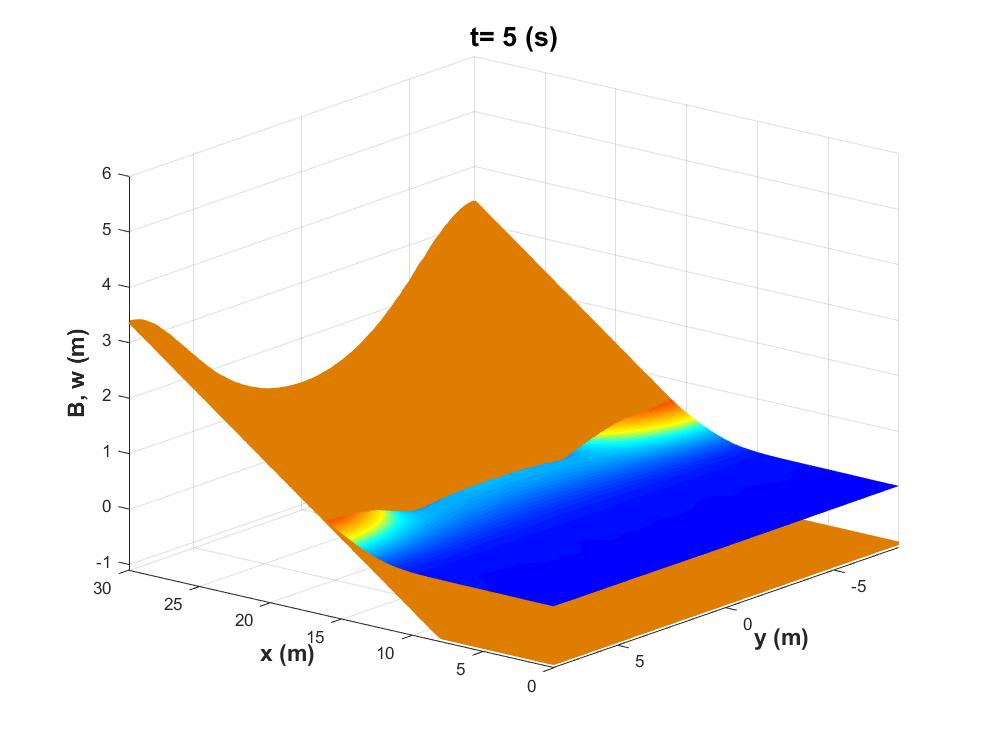}\quad \includegraphics[scale=0.19]{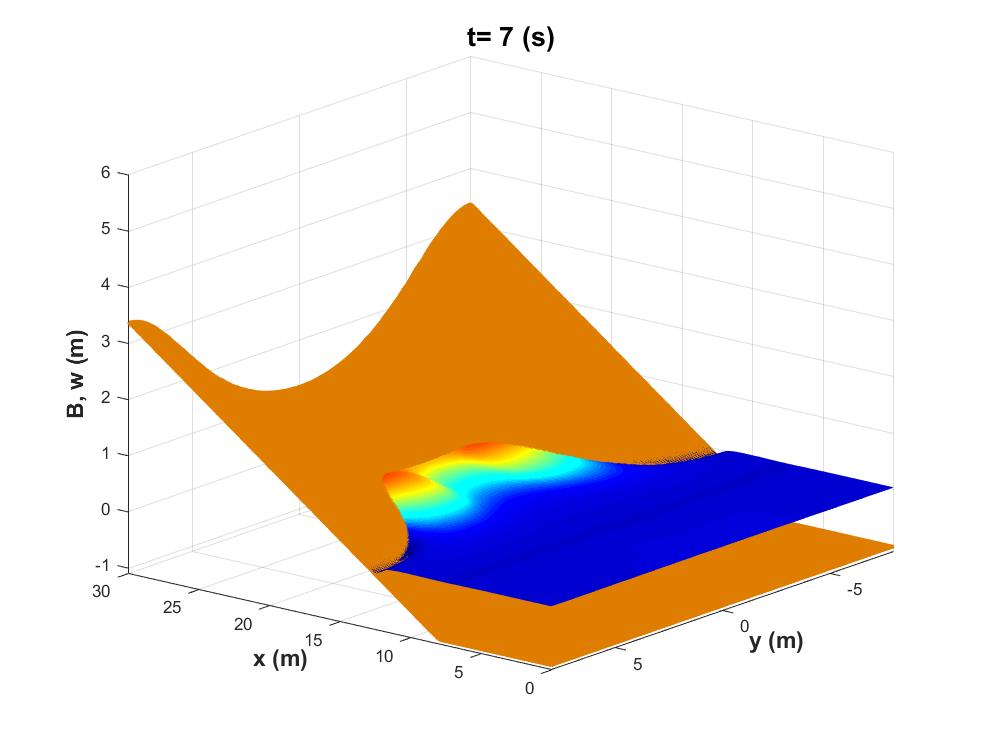}
 
\includegraphics[scale=0.19]{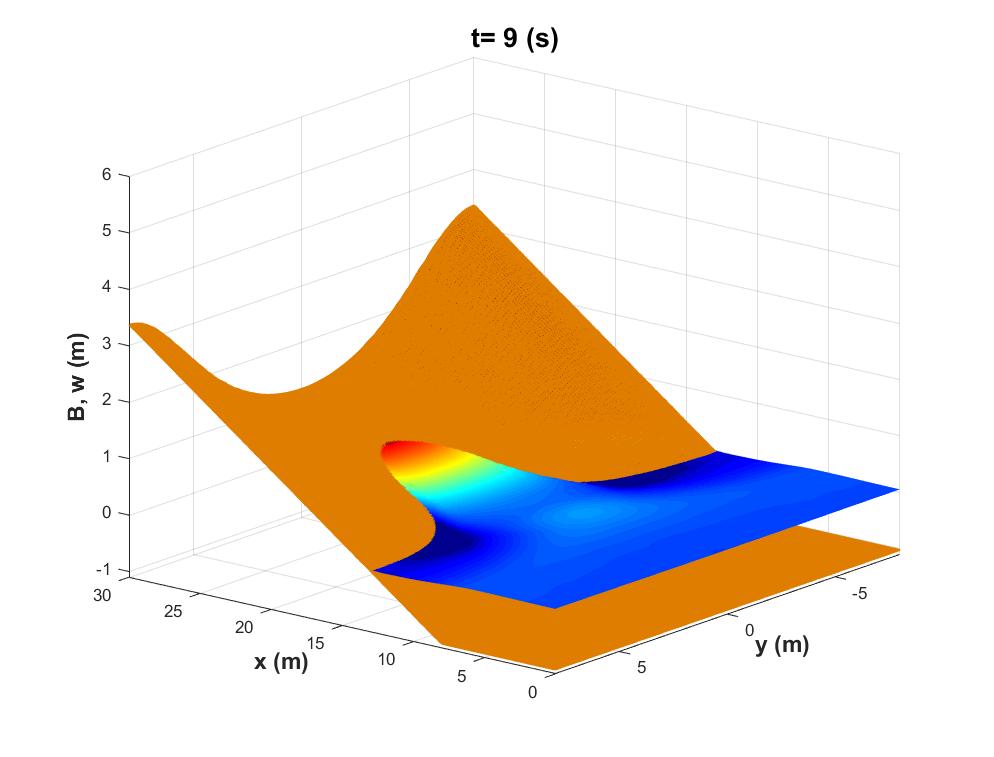}\quad \includegraphics[scale=0.19]{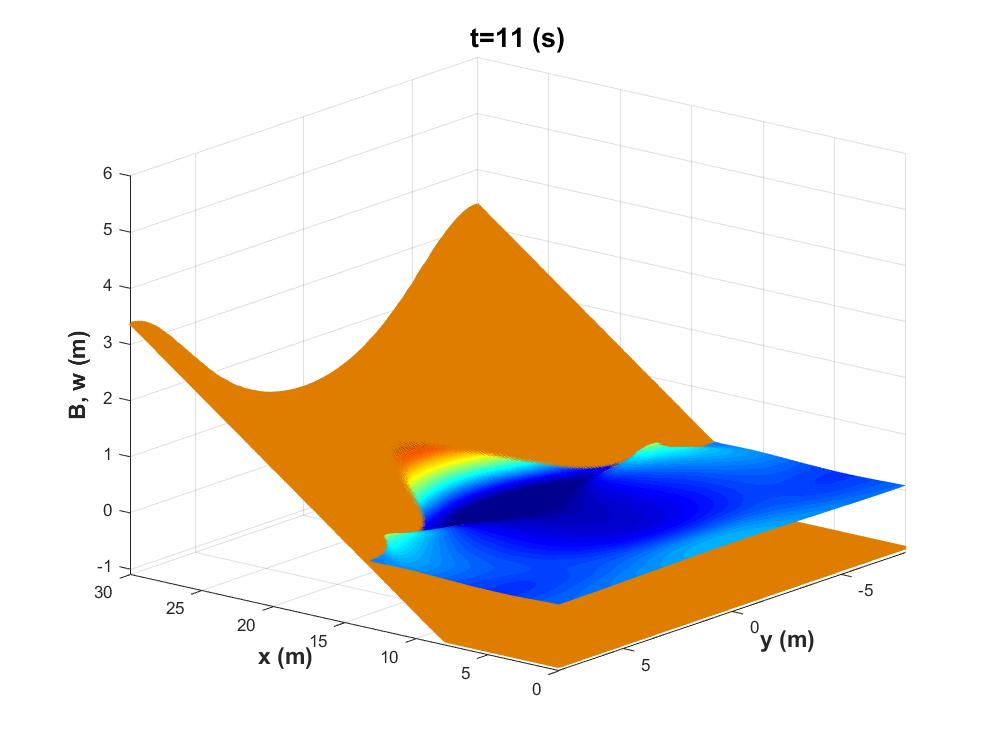}

\includegraphics[scale=0.19]{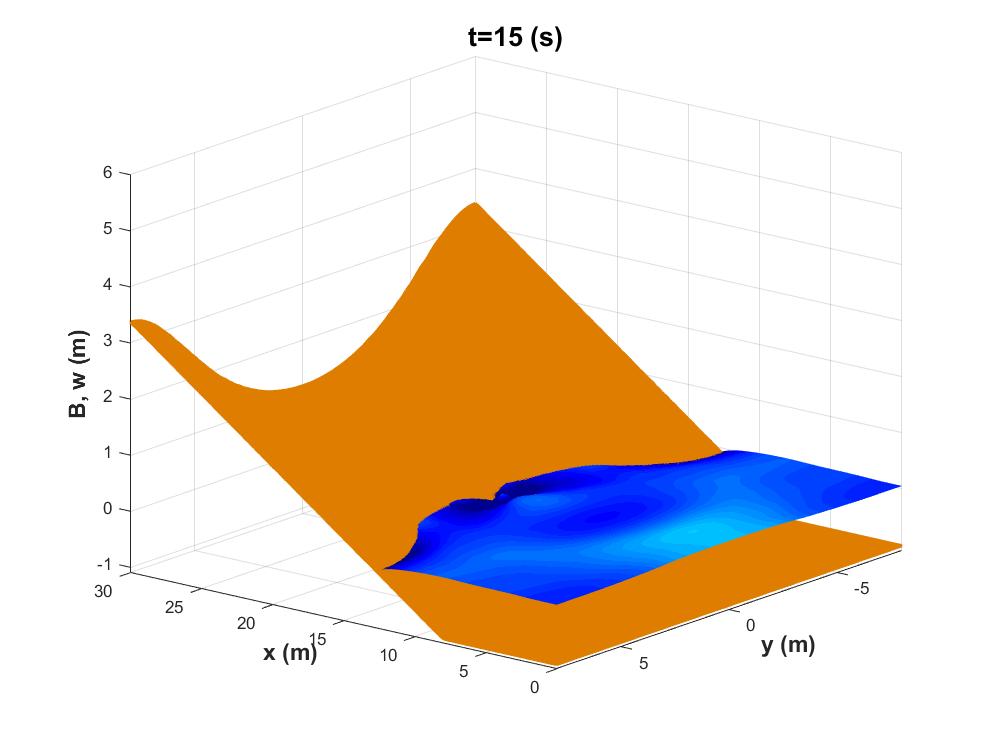}\quad \includegraphics[scale=0.19]{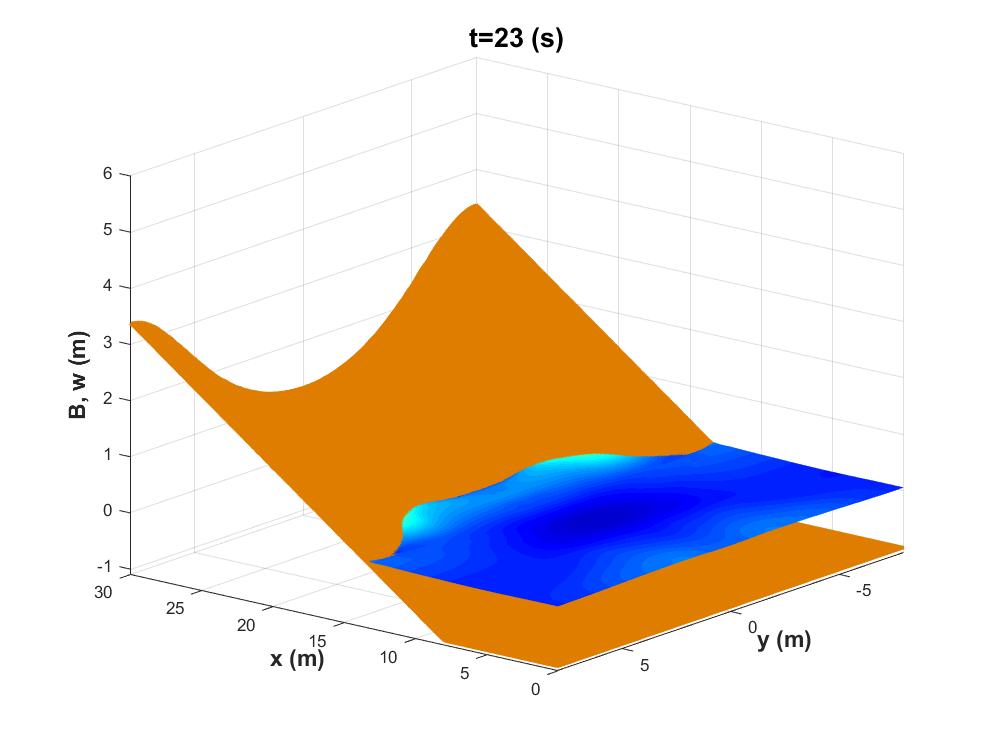}
\caption{Modeling of solitary wave over a sloping beach with complex topography: Three-dimensional  view of the wave propagation along the beach at different times.}
\label{Fig51}
\end{center}
\end{figure}

\newpage
\section{Conclusion}\label{S5}
In this study, we proposed a numerical model preserving a class of nontrivial steady-state solutions for predicting waves run-up on coastal areas. In our approach, unstructured central-upwind scheme is used for solving the shallow water flow model over variable bottom topography with bed friction effects. The numerical scheme preserves the stationary steady-states and guarantees the positivity of the water depth. To ensure the accuracy of the numerical model, we used a linear reconstruction for the variables of the system  where a positivity correction technique is used for the reconstructed values of the free-surface elevation variable to maintain the non-negativity of the computed water depth. Furthermore, we used a special semi-implicit scheme for the friction source term and a well-balanced formulation for the bottom topography to satisfy the balance between numerical fluxes and source terms at the discrete level. We proved that, the numerical model preserves the nontrivial well-balanced property of steady flow over a slanted surface. The robustness and accuracy of the proposed numerical model are tested performing numerical experiments to simulate waves run-up and run-down processes where the results are compared with laboratory experimental data. The simulation results have shown that the wave profile, the moving shoreline and the waves maximum run-up are well predicted and are in good agreement with experimental data. Finally, the proposed numerical model is validated to predict the evolution of waves propagating along a sloping beach with complex bottom topography.

\section*{\textbf{Acknowledgments}}
AB gratefully acknowledges funding form UM6P-OCP to support the PhD program of HK.
\bibliographystyle{elsarticle-num}
\bibliography{ref}
%Positivity preserving reconstructions of the solution were implemented to both ensure  the second-order resolution of the model and  maintain the positivity when the water depth being very small

%\textbf{Appendix I}\\
%\newline

\end{document}